\newcommand{\lan}{\langle}
\newcommand{\ran}{\rangle}
\newcommand{\eps}{{\varepsilon}}  
\newcommand{\veps}{{\varepsilon}}
\newcommand{\NN}{{\mathbb{N}}}
\newcommand{\QQ}{{\mathbb{Q}}}
\newcommand{\RR}{{\mathbb{R}}}
\newcommand{\EE}{{\mathbb{E}}}
\newcommand{\PP}{{\mathbb{P}}}
\newcommand{\XX}{{\mathbb{X}}}
\newcommand{\YY}{{\mathbb{Y}}}
\newcommand{\II}{{\mathbb{I}}}
\newcommand{\GG}{{\mathbb{G}}}
\newcommand{\clb}{{\mathcal{B}}}
\newcommand{\clc}{{\mathcal{C}}}
\newcommand{\cli}{{\mathcal{I}}}
\newcommand{\clp}{{\mathcal{P}}}
\newcommand{\cld}{{\mathcal{D}}}
\newcommand{\cle}{{\mathcal{E}}}
\newcommand{\clf}{{\mathcal{F}}}
\newcommand{\Gmc}{{\mathcal{G}}}
\newcommand{\Lmc}{{\mathcal{L}}}
\newcommand{\clm}{{\mathcal{M}}}
\newcommand{\clu}{{\mathcal{U}}}
\newcommand{\clv}{{\mathcal{V}}}
\newcommand{\Ybf}{{\mathbf{Y}}}
\newcommand{\Psib}{\mathbf{\Psi}}
\newcommand{\zero}{{\boldsymbol{0}}}
\newcommand{\one}{{\boldsymbol{1}}}
\newcommand{\bbd}{{\boldsymbol{b}}}\newcommand{\Bbd}{{\boldsymbol{B}}}
\newcommand{\ebd}{{\boldsymbol{e}}}
\newcommand{\gbd}{{\boldsymbol{g}}}
\newcommand{\Lbd}{{\boldsymbol{L}}}
\newcommand{\Mbd}{{\boldsymbol{M}}}
\newcommand{\pbd}{{\boldsymbol{p}}}
\newcommand{\ubd}{{\boldsymbol{u}}}
\newcommand{\vbd}{{\boldsymbol{v}}}
\newcommand{\xbd}{{\boldsymbol{x}}}\newcommand{\Xbd}{{\boldsymbol{X}}}
\newcommand{\ybd}{{\boldsymbol{y}}}\newcommand{\Ybd}{{\boldsymbol{Y}}}
\newcommand{\zbd}{{\boldsymbol{z}}}\newcommand{\Zbd}{{\boldsymbol{Z}}}
\newcommand{\Om}{\Omega}
\newcommand{\om}{\omega}
\newcommand{\la}{\lambda}
\newcommand{\lest}{\le_{\mbox{{\tiny st}}}}
\definecolor{expcol}{rgb}{1.0,0.5,0.5}
\definecolor{ecol}{rgb}{0.0, 0.0, 1.0}
\numberwithin{equation}{section}
\numberwithin{figure}{section}
\numberwithin{table}{section}
\newtheorem{lemma}{Lemma}[section]
\newtheorem{proposition}[lemma]{Proposition}
\newtheorem{theorem}[lemma]{Theorem}
\newtheorem{remark}[lemma]{Remark}
\newtheorem{corollary}[lemma]{Corollary}
\newtheorem{definition}[lemma]{Definition}
\numberwithin{equation}{section}
\newcommand{\SB}{\color{black}}
\begin{document}

\title{Extremal Invariant Distributions of Infinite Brownian Particle Systems with Rank Dependent Drifts}
\author{Sayan Banerjee, Amarjit Budhiraja}
\maketitle

\begin{abstract}
\noindent Consider an infinite collection of particles on the real line moving according to independent Brownian motions and such that the $i$-th particle from the left gets the drift $g_{i-1}$. The case where $g_0=1$ and $g_{i}=0$ for all $i \in \NN$ corresponds to the well studied infinite Atlas model. Under conditions on the drift vector $\gbd = (g_0, g_1, \ldots)'$ it is known that the Markov process corresponding to the gap sequence of the associated ranked particles has a continuum of product form stationary distributions
$\{\pi_a^{\gbd}, a \in S^{\gbd}\}$ where $S^{\gbd}$ is a semi-infinite interval of the real line. In this work we show that all of these stationary distributions are extremal and ergodic. We also prove 
that any product form stationary distribution of this Markov process that satisfies a mild integrability condition must be $\pi_a^{\gbd}$ for some $a \in S^{\gbd}$. These results are new even for the infinite Atlas model. The work makes progress on the open problem of characterizing all the invariant distributions of general competing Brownian particle systems interacting through their relative ranks. Proofs rely on synchronous and mirror coupling of Brownian particles and properties of the intersection local times of the various particles in the infinite system.

\noindent\newline

\noindent \textbf{AMS 2010 subject classifications:} 60J60, 60K35, 60J25, 60H10. \newline

\noindent \textbf{Keywords:} Reflecting Brownian motions, long time behavior, extremal invariant distributions, infinite Atlas model, collision local time,  ergodicity, product-form stationary distributions, synchronous coupling, mirror coupling.
\end{abstract}

\section{Introduction}

\label{sec:intro}  

\subsection{Background}Consider  a  collection (finite or infinite) of particles on the real line moving according to mutually independent Brownian motions and such that the $i$-th particle from the left gets a constant drift $g_{i-1}$. The special case when $g_0=1$ and $g_i =0$ for $i\in \NN$ is the well studied Atlas model. We  refer to the general setting as the $\gbd$-Atlas model, where $\gbd = (g_0, g_1, \ldots)'$. Such particle systems were originally introduced in stochastic portfolio theory \cite{fernholz2002stochastic,banner2005atlas,fernholz2009stochastic} as models for stock growth evolution in equity markets and have been investigated extensively in recent years in several different directions. In particular, characterizations of such particle systems as uniform  scaling limits of jump processes with local interactions on integer lattices, such as the totally asymmetric simple exclusion process, have been studied in \cite{karatzas2016systems}. Various types of results for  the asymptotic behavior of the empirical measure of the particle states have been studied, such as propagation of chaos, characterization of the associated McKean-Vlasov equation and nonlinear Markov processes 
\cite{shkol, jourdain2008propagation}, large deviation principles \cite{dembo2016large}, characterizing the asymptotic density profile and the  trajectory of the leftmost particle via Stefan free-boundary problems \cite{cabezas2019brownian}.  These particle systems also have  close connections with Aldous' ``Up the river'' stochastic control problem \cite{aldous41up}, recently solved in \cite{tang2018optimal}.
Results on wellposedness of the associated stochastic differential equations (in the weak and strong sense) and on absence of triple collisions (three particles at the same place at the same time) have been studied in \cite{bass1987uniqueness,shkol2011levy, ichiba2013strong, ichiba2010collisions,sarantsev2015triple,ichiba2017yet}.

One important direction of investigation has been in the study of the long-time behavior of such particle systems. For finite particle systems, under conditions on the drift vector $\gbd$, it follows from results of Harrison and Williams \cite{harrison1987multidimensional,harrison1987brownian} that the multidimensional reflected Brownian motion describing the evolution of the gaps between the ranked particles has a unique stationary (invariant) distribution (see \cite{PP}). It is known that convergence of the law at time $t$ to this stationary distribution, as $t \to \infty$, occurs at a geometric rate \cite{budlee}. Rates of convergence to stationarity, depending explicitly on  the drift vector $\gbd$ and dimension have been obtained in \cite{ichiba2013convergence,BanBudh,banerjee2020dimension}.

In the current work we are interested in infinite particle systems. One basic result on long-time behavior of such particle systems was obtained in \cite{PP} which showed that for the infinite Atlas model, i.e. when $\gbd = \gbd^1=(1,0,0, \ldots)'$, the process describing the gaps between the ranked particles in the system has a simple product form stationary distribution given as $\pi_0 = \otimes_{n=1}^{\infty} \mbox{Exp}(2)$ {\SB(here and later, for $a>0$, $\mbox{Exp}(a)$ denotes the Exponential distribution with mean $1/a$)}. The paper \cite{PP} also conjectured that this is the unique stationary distribution of the (gap sequence in the) infinite Atlas model.
However, this was shown to be false in  \cite{sarantsev2017stationary} who gave an uncountable collection of product form stationary distributions for the gaps in the infinite Atlas model defined as
\begin{equation}\label{altdef}
\pi_a \doteq \bigotimes_{i=1}^{\infty} \mbox{Exp}(2 + ia), \ a \ge 0.
\end{equation}
As in the finite dimensional settings it is of interest to investigate convergence of the laws at time $t$ to the stationary distributions as $t \to \infty$. Due to the multiplicity of stationary distributions, a meaningful goal is to understand the local stability structure of this infinite dimensional stochastic dynamical system and identify the basins of attraction of the various stationary distributions.
Such results, describing (weak and strong) domains of attraction of $\pi_0$ have been obtained in \cite{AS,DJO,banerjee2022domains} and (weak) domains of attraction of $\pi_a$ ($a\neq 0$) in
\cite{banerjee2022domains}. 
{\SB Results analogous to \cite{sarantsev2017stationary, AS} for
 two-sided infinite Brownian systems have been obtained  in \cite{sarantsev2017two}.}

\subsection{Goals and results}
Although the above results give us a good understanding of the local stability structure of the infinite Atlas model, the picture that one has is far from complete. A key obstacle here is that a full characterization of all extremal invariant distributions of the infinite Atlas model is currently an open problem. The goal of this work is to make some progress towards this goal  and, moreover, provide some characterization of the structure of the set of invariant distributions. We will in fact consider the more general setting of the $\gbd$-Atlas model where the drift vector $\gbd \in \cld$, {\SB with}
\begin{equation}\label{eq:clddefn}
	\cld \doteq \Big\{\gbd = (g_0, g_1, \ldots)' \in \RR^{\infty}:  \sum_{i=0}^{\infty} g_i^2 <\infty \Big\}.
\end{equation}
For this setting it is known from the work of \cite{sarantsev2017stationary} that, once more, the process  associated with the gap sequence of the ranked particle system has a continuum of stationary distributions given as
\begin{equation}\label{eq:piagbddefn}
	\pi_a^{\gbd} \doteq \otimes_{n=1}^{\infty} \mbox{Exp}(n (2\bar g_n+a)), \;\; a > -2 \inf_{n \in \NN} \bar g_n,\end{equation}
where $\bar g_n \doteq \frac{1}{n}(g_0 + \cdots + g_{n-1})$.  {\SB In the special case where $\gbd \in \cld_1$, {\SB with}
\begin{equation}\label{cldzdefn}
	\cld_1 \doteq \{\gbd \in \cld: \text{ there exist } N_1 < N_2 < \dots \rightarrow \infty \text{ such that } \bar g_k > \bar g_{N_j}, \ k=1,\dots,N_j-1, \text{ for all } j \ge 1\},\end{equation}
 $\pi_a^{\gbd}$ is also an invariant distribution for $a=-2 \inf_{n \in \NN} \bar g_n  = -2\lim_{j \rightarrow \infty} \bar g_{N_j}$ (see \cite[Section 4.2]{AS}). 
Note that $\gbd^1 \in \cld_1$ whereas the zero drift $\gbd = (0, 0, \ldots)'$ is in $\cld$ but not in $\cld_1$.
 Roughly speaking, a drift $\gbd$ lying in $\cld_1$ produces a `stabilizing interaction' in the subsystem of the lowest $N_j$ particles for any $j \ge 1$, 
 due to which the gaps between them stabilize in time owing to stronger upward average drifts of the lower particles in this subsystem in comparison to the average drift of all the $N_j$ particles. This intrinsic stabilizing influence from the drift of the particles leads to an additional stationary distribution, namely  $\pi_{ -2\lim_{j \rightarrow \infty} \bar g_{N_j}}$ (in comparison to a $\gbd \in \cld\setminus \cld_1 $).
For $\gbd \in \cld\setminus \cld_1$, where such a  mechanism  is absent, local stability essentially arises from configurations with a
 rapidly increasing density of particles as one moves away from the lowest one and hence one only obtains ``dense stationary distributions" corresponding to $a  > -2 \inf_{n \in \NN} \bar g_n$.} 
 
 Using {\SB Kakutani's theorem \cite{kakutani1948equivalence}} it is easy to verify  that, for different values of $a$, the probability measures 
$\pi_a^{\gbd}$ are mutually singular. 
These distributions are also special in that they have a product form structure. In particular, if the initial distribution of the gap process is chosen to be one of these distributions, then the laws of distinct gaps at any fixed time are independent despite these gaps having a highly correlated temporal evolution mechanism (see \eqref{order_SDE}-\eqref{gapdef}). {\SB We now describe the two main results of this work.}

\textbf{First result: Extremality.} In Theorem \ref{thm_main} we show that for each $\gbd \in \cld$ and $a> -2 \inf_{n \in \NN} \bar g_n$, $\pi_a^{\gbd}$ is an extremal invariant distribution for the gap sequence process of the $\gbd$-Atlas model.
 Further, if $\gbd \in \cld_1$,
 $\pi_a^{\gbd}$ is also an extremal invariant distribution for $a=-2 \inf_{n \in \NN} \bar g_n  = -2\lim_{j \rightarrow \infty} \bar g_{N_j}$; in particular for the infinite Atlas model $\pi_a = \pi_a^{\gbd^1}$ is extremal for all $a\ge 0$.
 From equivalence between extremality and ergodicity (cf. Lemma \ref{lemerg}) it then follows that all these invariant distributions are ergodic as well. {\SB This result also identifies non-trivial subsets in the weak domain of attraction of $\pi_a^{\gbd}$ for each $a> -2 \inf_{n \in \NN} \bar g_n$ ($a \ge -2 \inf_{n \in \NN} \bar g_n$ if $\gbd \in \cld_1$); see Corollary \ref{doacor}.}

 Questions about extremality and ergodicity of stationary distributions have been addressed previously in the context of interacting particle systems on countably infinite graphs (see \cite{liggett1976coupling,andjel1982invariant,sethuraman2001extremal,balazs2007existence} and references therein). However, in all these cases, the interactions are Poissonian, {\SB namely, the dynamics is given in terms of jumps of particles to neighboring vertices in a countably infinite graph at epochs of Poisson processes associated with edges or vertices.} This enables one to use the (explicit) generator of the {\SB associated continuous-time jump-Markov processes} in an effective way. The interactions in rank based diffusions are very `singular' owing to the local time based dynamics (see \eqref{order_SDE}) and generator based methods seem to be less tractable. Furthermore, unlike previous works,  the state space  for the gap process (i.e. $\mathbb{R}_+^{\infty}$) is not countable and has a non-smooth boundary, and the process has intricate interactions (oblique reflections) with the boundary. Hence, proving extremality requires new techniques. Our proofs are based on  constructing appropriate couplings for these infinite dimensional diffusions which then allow us to prove suitable invariance properties (see e.g. \eqref{eq:gamone}) and a certain `directional strong Feller property' (see Proposition \ref{prop:prop2}). {\SB Such coupling techniques, based on `mirror' couplings of driving Brownian motions, are novel in the context of infinite rank based diffusions and provide a new method for establishing semigroup continuity properties for such processes. Moreover, the coupling approach introduced in the paper
 has the potential to be applicable to broader families of infinite-dimensional diffusion processes for which such directional strong Feller properties may be useful, e.g. in analysis of the  ergodic behavior.} Although our setting and methods are very different, at a high level, the approach we take, {\SB of proving extremality by showing the a.e. constancy of suitable invariant functions}, is inspired by the papers \cite{sethuraman2001extremal,balazs2007existence}.

{\bf Second result: Characterization of product form stationary distributions.}  
A natural question is whether there are any other product form stationary distributions of the $\gbd$-Atlas model than the ones identified in \cite{sarantsev2017stationary}. Our second main result (Theorem \ref{thm_main2}) answers this question in the negative under certain conditions by showing that if $\gbd \in \cld_1$
and $\pi$ is a product form stationary distribution of the $\gbd$-Atlas model satisfying a mild integrability condition (see \eqref{eq:satinteg}) then it must be $\pi_a^{\gbd}$ for some 
$a \ge -2\lim_{j \rightarrow \infty} \bar g_{N_j}$. {\SB Furthermore, this result gives a novel probabilistic interpretation to $a$   in terms of the resulting force acting on a tagged particle under the combined influence of hardcore interactions (collision local times) and soft potentials (drift terms). See Remark \ref{arem} for this interpretation and for a conjecture  that is suggested by this interpretation}.

\subsection{Proof ideas}
We now make some comments on proofs. The key step in proving the extremality of $\pi_a^{\gbd}$ is to establish that any bounded measurable function $\psi$ on $\RR_+^{\infty}$ that is $\pi_a^{\gbd}$-a.e. invariant, under the action of the semigroup of the Markov process corresponding to the $\gbd$-Atlas gap sequence, is constant $\pi_a^{\gbd}$-a.e.
If $\gbd = \gbd^1$ and $a=0$, we have that $\pi_a^{\gbd} = \otimes_{i=1}^{\infty} \mbox{Exp}(2)$, and therefore the coordinate sequence $\{Z_i\}_{i=1}^{\infty}$ is iid under $\pi_a^{\gbd}$. In this case, from the Hewitt-Savage zero-one law it suffices to show that $\psi$ is $\pi_a^{\gbd}$-a.e. invariant under all finite permutations of the coordinates of $\RR_+^{\infty}$. For this, in turn, it suffices to simply prove the above invariance property for transpositions of the $i$-th and $(i+1)$-th coordinates, for all $i \in \NN$. For a general $\pi_a^{\gbd}$, the situation is  more involved as the coordinate sequence $\{Z_i\}_{i=1}^{\infty}$  is not iid any more. Nevertheless, from the scaling properties of Exponential distributions it follows that, with $c_n \doteq 2[n (2\bar g_n+a)]^{-1}$, the sequence 
$\{\tilde Z_n\}_{n\ge 1}$, defined as $\tilde Z_n = c_n^{-1} Z_n$, $n \in \NN$, is iid under $\pi_a^{\gbd}$. In this case, in order to invoke the Hewitt-Savage zero-one law, one needs to argue that for each $i$ the map $\psi$ is $\pi_a^{\gbd}$-a.e. invariant under the transformation that takes the $(i, i+1)$ coordinates $(z_i, z_{i+1})$ to $(\frac{c_i}{c_{i+1}} z_{i+1}, \frac{c_{i+1}}{c_{i}} z_{i})$ and keeps the remaining coordinates the same. Establishing this property is at the heart of the proof of Theorem \ref{thm_main}. A key technical idea in the proof is the construction of a mirror coupling of the first $i+1$ Brownian motions, and synchronous coupling of the remaining Brownian motions,
in the evolution of the ranked infinite $\gbd$-Atlas model corresponding to a pair of nearby initial configurations. Estimates on the probability of a successful coupling, before any of the first $i$-gap processes have hit zero or the lowest $i$-particles have interacted with the higher ranked particles (in a suitable sense), are some of the important ingredients in the proof. {\SB We refer the reader to Section \ref{sec:pfov1} for additional comments on the proof strategy.}

The proof of Theorem \ref{thm_main2} hinges on establishing a key identity for expectations, under the given product form invariant measure $\pi$, of certain integrals involving the state process of the $i$-th gap and the collision local time for the $(j-1)$-th and $j$-th particle, for $i\neq j$ (see Lemma \ref{lem:lociden}). This identity is a consequence of the product form structure of $\pi$ and basic results on local times of continuous semimartingales. 
One subtlety here is that although the product form structure of an invariant measure $\pi$ implies that the laws of the various gaps at any fixed time $t$ (when the process is initiated at $\pi$) are independent, the laws of the paths of the various gap processes are not, and thus one cannot immediately deduce the independence beween the state of the $i$-th gap at time $t$ and the local time of the $j$-th gap at $0$, at time $t$, for $i\neq j$.
By using the form of the dynamics of the $\gbd$-Atlas model, the identity in Lemma \ref{lem:lociden} allows us to 
{\SB obtain a recursive system of equations for the moment generating functions of the coordinate projections of $\pi$  that can then  be solved explicitly
from which it is then readily seen that $\pi$ must be $\pi_a^{\gbd}$ for a suitable value of $a$. See Section \ref{sec:pfov2} for additional comments on the proof idea.}

\subsection{Open problem}
Of course it is immediate to construct non-product form stationary distributions of the $\gbd$-Atlas model by considering mixtures of the above product stationary distributions, however one can ask if these mixtures are all the invariant measures of the $\gbd$-Atlas model. For the cases where $\gbd = (0,0, \ldots)'$ this question was answered in the affirmative in \cite[Theorem 4.2]{RuzAiz} under certain  integrability constraints on the denseness of particle configurations. For a general $\gbd$ providing such a complete characterization is a challenging open problem.

In the context of interacting particle systems on countably infinite graphs, the analogous problem has been solved completely in a few cases such as the simple exclusion process \cite{liggett1976coupling} and the zero range process \cite{andjel1982invariant} where the extremal probability measures are fully characterized as an explicit collection of certain product form measures.
However, in these models the particle density associated with distinct extremal measures are scalar multiples of each other owing to certain homogeneity properties in the dynamics (see, for example, \cite[Theorem 1.10]{andjel1982invariant}). This, along with the Poissonian nature of the interactions, enables one to prove useful monotonicity properties of the `synchronously coupled' dynamics (see \cite[Section 2]{liggett1976coupling} and \cite[Section 4]{andjel1982invariant}) using generator methods that are crucial to the above characterization results. 

A key challenge in extending these methods to rank based diffusions of the form considered here is that, in addition to the singular local time interactions, the point process associated with the configuration of particles with gaps distributed as $\pi_a^{\gbd}$ has an intensity function  $\rho(x)$, that grows exponentially as $x\to \infty$ when $a>0$ and, due to a nonlinear dependence on $a$, lacks the scalar multiple property for distinct values of $a$. This is a direct consequence of the inhomogeneity of the topological interactions in our particle systems where the local stability in a certain region of the particle cloud is affected both by the density of particles in the neighborhood and their relative ranks in the full system.  
Moreover, unlike the above interacting particle systems,  in rank based diffusions, when the initial gaps are  given by a stationary distribution,  the point process of particles is typically not stationary. This phenomenon, where  the gaps are stationary while the associated point process  is not, referred to as \emph{quasi-stationarity} in \cite{RuzAiz}, is technically challenging. We note that this latter paper studies one setting where the intensity function grows exponentially and the particle density lacks the scalar multiple property for distinct values of $a$. However their setting, in the context of rank based diffusions, corresponds to the
case  $\gbd = (0,0, \ldots)'$, where the unordered particles behave like independent standard Brownian motions, and this fact is crucially exploited in \cite{RuzAiz}.

\subsection{Organization}Rest of the paper is organized as follows. We close this section by summarizing the main notation used in this work. In Section \ref{sec:mainres} we give the precise formulation of the model. In Section \ref{sec:mr}, we describe the questions of interest and give our main results. Finally Sections \ref{sec:prffirstthm} and \ref{sec:mainsecthm} give the proofs of our main results, namely Theorems \ref{thm_main} and \ref{thm_main2}, respectively.

\subsection{Notation}

The following notation will be used.  
Let {\SB $\NN_0 \doteq \{0,1,2,\dots\}$}, $\RR^{\infty} \doteq \{(x_0, x_1, \ldots)': x_i \in \RR,\, i \in \NN_0\}$ and $\RR_+^{\infty} \doteq \{(x_0, x_1, \ldots)': x_i \in \RR_+,\,  i \in \NN_0\}$.
We will equip $\RR^{\infty}_+$ with the partial order `$\le$' under which $\xbd \le \ybd$ for $\xbd = (x_i)_{i \in \NN_0}$, $\ybd = (y_i)_{i \in \NN_0}$, if $x_i \le y_i$ for all $i \in \NN_0$.  Borel $\sigma$-fields on a metric space $S$ will be denoted as $\clb(S)$ and the space of probability measures on $(S, \clb(S))$ will be denoted as $\clp(S)$ which is equipped with the topology of weak convergence.
We will denote by $\QQ$ the set of rationals.
For a Polish space $S$ with a partial order `$\le$', we say for $\gamma_1, \gamma_2 \in \clp(S)$, $\gamma_1 \lest \gamma_2$, if there are $S$-valued random variables $\Xbd_1, \Xbd_2$ given on a common probability space with $\Xbd_i$ distributed as $\gamma_i$, $i = 1,2$, and $\Xbd_1 \le \Xbd_2$ a.s. {\SB Let $\XX \doteq  \clc([0,\infty): \RR_+^{\infty})$ which is equipped with the topology of local uniform convergence (with $\RR_+^{\infty}$ equipped with the product topology). For $\gamma \in \clp(\RR_+^{\infty} )$,
 let $L^2(\gamma)$ be the collection of  all measurable $\psi: \RR_+^{\infty} \to \RR$ such that
$\int_{\RR_+^{\infty}} |\psi(\zbd)|^2  \gamma(d\zbd) <\infty$.
We denote the inner-product and the norm on $L^2(\gamma)$ as $\lan \cdot, \cdot \ran$ and $\|\cdot\|$ respectively.}
\section{Model Formulation}
\label{sec:mainres}
Let
\begin{equation*}
	\clu \doteq \Big\{\xbd = (x_0, x_1, \ldots)' \in \RR^{\infty}: \sum_{i=0}^{\infty} e^{-\alpha [(x_i)_+]^2} < \infty \mbox{ for all } \alpha >0\Big\}.
\end{equation*}
Recall $\cld$ defined in \eqref{eq:clddefn}.
Following \cite{AS}, an $\xbd \in \RR^{\infty}$ is called {\em rankable} if there is a one-to-one map $\pbd$ from $\NN_0$ onto itself such that $x_{\pbd(i)} \le x_{\pbd(j)}$ whenever $i<j$, $i,j \in \NN_0$. It is easily seen that any $\xbd \in \clu$ is locally finite and hence rankable. For an $\xbd$ that is rankable we denote the unique permutation map $\pbd$ as above which breaks ties in the lexicographic order by $\pbd_{\xbd}$.
For a sequence $\{W_i\}_{i\in \NN_0}$ of mutually independent standard Brownian motions given on some filtered probability space, and $\ybd = (y_0, y_1, \ldots)' \in \clu$, $\gbd = (g_0, g_1, \ldots)' \in \cld$, consider the following system of equations.
\begin{equation}\label{eq:unrank}
	dY_i(t) = \left[\sum_{k=0}^{\infty} \one(\pbd_{\Ybf(t)}(k) = i) g_k\right] dt + dW_i(t), 
\end{equation}
$Y_i(0) = y_i$,  $i \in \NN_0$,
where for $t\ge 0$,
$\Ybf(t) = (Y_0(t), Y_1(t), \ldots)'$.

The following result  is from \cite{AS} (see Theorems 3.2 and Lemma 3.4 therein).
\begin{theorem}[\cite{AS}]\label{sarthm}
For every $\gbd \in \cld$ and $\ybd \in \clu$ there is a unique weak solution $\Ybd(\cdot)$ to \eqref{eq:unrank}. 	Furthermore
$P(\Ybf(t)\in \clu \mbox{ for all } t \ge 0)=1$ and for any $T<\infty$ and $m \in (0, \infty)$, the set $\{i \in \NN_0: \inf_{t \in [0,T]} Y_i(t) \le m\}$ is finite a.s.
\end{theorem}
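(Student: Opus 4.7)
The plan is to establish weak existence and uniqueness for \eqref{eq:unrank} via a Girsanov change of measure from a system of independent Brownian motions, and then to deduce the pathwise $\clu$-membership and the local finiteness of low-lying particles from Gaussian tail estimates combined with Borel-Cantelli. The decisive structural observation is that the drift sequence $\{g_{\pbd_{\Ybd(t)}^{-1}(i)}\}_{i \in \NN_0}$ is only a time-dependent permutation of the fixed sequence $\gbd \in \cld$, so $\sum_i |g_{\pbd_{\Ybd(t)}^{-1}(i)}|^2 = \|\gbd\|_2^2 < \infty$ automatically and uniformly in $(t,\omega)$, which renders Novikov's condition trivial regardless of the (highly singular) rank dynamics.

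First, on some filtered probability space carrying independent standard Brownian motions $\{W_i\}_{i\in\NN_0}$, I would set $\ti Y_i(t) \doteq y_i + W_i(t)$ and verify that $\ti\Ybd(t)\in\clu$ for all $t \in [0,T]$ almost surely. This follows by combining Gaussian tail bounds on $\sup_{s\le T}|W_i(s)|$ with the summability defining $\clu$ via a Borel-Cantelli argument. In particular $\ti\Ybd(t)$ is then rankable for every $t$, so $\pbd_{\ti\Ybd(t)}$ is well-defined. Path continuity and the lexicographic tie-breaking convention yield progressive measurability of the drift $b_i(t) \doteq g_{\pbd_{\ti\Ybd(t)}^{-1}(i)}$; the collision sets $\{t : \ti Y_i(t) = \ti Y_j(t)\}$ for $i \ne j$ have Lebesgue measure zero almost surely, since $\ti Y_i - \ti Y_j$ is $\sqrt{2}$ times a Brownian motion started away from $0$, so any ambiguity of $b_i$ on this null set is immaterial.

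Next, with $\sum_{i\in\NN_0} b_i(t)^2 \equiv \|\gbd\|_2^2$, the Dol\'eans exponential
\begin{equation}
\Lambda_T \doteq \exp\!\Big(\sum_{i\in\NN_0}\int_0^T b_i(s)\,dW_i(s) \,-\, \tfrac{T}{2}\|\gbd\|_2^2\Big),
\end{equation}
where the infinite sum converges in $L^2(\PP)$ by the It\^o isometry and orthogonality of stochastic integrals against independent Brownian motions, is a true martingale. Under $d\QQ_T \doteq \Lambda_T\, d\PP$, each $\ti W_i(t) \doteq W_i(t) - \int_0^t b_i(s)\,ds$ is a standard Brownian motion and the $\ti W_i$'s are mutually independent, so $\ti\Ybd$ solves \eqref{eq:unrank} under $\QQ_T$ with respect to $\{\ti W_i\}$, yielding weak existence. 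For uniqueness in law I would reverse the change of measure: any weak solution $\Ybd$ defines an analogous density $\Lambda_T^{\Ybd}$, and under $(\Lambda_T^{\Ybd})^{-1}d\PP$ the processes $Y_i - y_i$ become a sequence of independent Brownian motions whose joint law is uniquely determined, pinning down the law of $\Ybd$ as a measurable functional of $(\ybd,\gbd)$.

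Finally, the pathwise statement $\PP(\ti\Ybd(t)\in\clu\text{ for all }t\ge 0)=1$ and the local finiteness of $\{i : \inf_{t\in[0,T]} Y_i(t)\le m\}$ both reduce to the uniform drift bound $|b_i(s)|\le\|\gbd\|_2$ together with Gaussian tails: on the latter event one has $y_i \le m + T\|\gbd\|_2 + \sup_{s\le T}|W_i(s)|$, and $\ybd \in \clu$ forces $y_i$ to grow fast enough in $i$ that $\sum_i \PP(\sup_{s\le T}|W_i(s)| \ge y_i - m - T\|\gbd\|_2\big) < \infty$, so Borel-Cantelli finishes the job; these assertions then transfer from $\PP$ to $\QQ_T$ by equivalence of measures on $\clf_T$. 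The principal technical obstacle is carrying out the Girsanov step rigorously in the countably infinite setting---justifying the $L^2$-convergence of the infinite sum of stochastic integrals, the progressive measurability of the rank-dependent drift under path perturbations, and the martingale property of $\Lambda_T$. It is precisely the identity $\sum_i b_i(t)^2 \equiv \|\gbd\|_2^2$, a direct consequence of the rank structure and the $\ell^2$-defining condition of $\cld$, that keeps these infinite-dimensional manipulations tractable; absent the summability in \eqref{eq:clddefn} one would have no hope of a uniform Novikov bound, and the argument would collapse.
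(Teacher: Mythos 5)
Your proposal is correct and matches the approach actually taken in \cite{AS}: the cited result is proved by a Girsanov change of measure from a system of independent Brownian motions started at $\ybd$, exploiting precisely the observation you isolate --- that the rank-dependent drift vector is at each time a permutation of $\gbd$, so $\sum_i b_i(t)^2 \equiv \|\gbd\|_2^2$, making Novikov trivial --- together with Gaussian tail estimates and Borel--Cantelli (on the Brownian side, transferred back by equivalence of measures) to obtain $\clu$-membership and local finiteness. One small slip: in the uniqueness step, $(\Lambda_T^{\Ybd})^{-1}\,d\PP$ is not a probability measure (it differs from the correct Radon--Nikodym derivative $\exp(-\sum_i\int_0^T b_i^{\Ybd}\,dW_i' - \tfrac{T}{2}\|\gbd\|_2^2)$ by the constant factor $e^{T\|\gbd\|_2^2}$); with the correct density, which is again a functional of $\Ybd$ alone once one substitutes $dW_i' = dY_i - b_i^{\Ybd}\,dt$, the argument goes through exactly as you describe.
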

When $\gbd = \gbd^1 = (1, 0,0, \ldots)'$ the process given by the above theorem is the well known (standard) infinite Atlas model. In general, the unique in law solution process given by Theorem \ref{sarthm} will be referred to as the $(\gbd, \ybd)$-infinite Atlas model.
Since this solution process $\Ybf(t)$, under the conditions of the above theorem, is rankable a.s., we can define the ranked process $\{Y_{(i)}(t), i \in \NN_0\}_{t\ge 0}$ 
that gives the unique ranking of $\Ybf(t)$ (in which ties are broken in the lexicographic order) such that $Y_{(0)}(t) \le Y_{(1)}(t) \le Y_{(2)}(t) \le \cdots$.
From \cite[Lemma 3.5]{AS}, the processes defined by
\begin{equation}\label{eq:bistar}
B^*_i(t) \doteq \sum_{j=0}^{\infty}\int_0^t\mathbf{1}(Y_j(s) = Y_{(i)}(s)) dW_j(s), \ i \in \mathbb{N}_0, t \ge 0,
\end{equation}
are independent standard Brownian motions which can be used to write down the following stochastic differential equation (SDE) for $\{Y_{(i)}(\cdot) : i \in \mathbb{N}_0\}$:
\begin{equation}\label{order_SDE}
dY_{(i)}(t) = g_i dt + dB^*_i(t) - \frac{1}{2}dL^*_{i+1}(t) + \frac{1}{2} dL^*_{i}(t), \ t \ge 0, \;\;   Y_{(i)}(0) = y_{(i)},\, i \in \mathbb{N}_0.
\end{equation}
Here, $L^*_{0}(\cdot) \equiv 0$ and for $i \in \mathbb{N}$, $L^*_i(\cdot)$ denotes the local time of collision between the $(i-1)$-th and $i$-th particles, that is, the unique non-decreasing continuous process satisfying $L^*_i(0)=0$ and $L^*_i(t) = \int_0^t\mathbf{1}\left(Y_{(i-1)}(s)=Y_{(i)}(s)\right)dL^*_i(s)$ for all $t \ge 0$.
The gap process for the  $(\gbd, \ybd)$-infinite Atlas model is the $\RR_+^{\infty}$-valued process $\Zbd(\cdot)=$ $(Z_1(\cdot), Z_2(\cdot), \dots)'$ defined by
\begin{equation}\label{gapdef}
Z_i(\cdot) \doteq Y_{(i)}(\cdot) - Y_{(i-1)}(\cdot), \ i \in \mathbb{N}.
\end{equation}
Let
\begin{equation}\label{clvdef}
\clv \doteq \{\zbd \in \RR_+^{\infty}: \mbox{ for some } \ybd \in \clu, \zbd = (y_{(1)}-y_{(0)}, y_{(2)}-y_{(1)}, \ldots)'\}.
\end{equation}
Note that if $\zbd \in \clv$ then $\ybd(\zbd) \doteq (0, z_1, z_1+z_2, \ldots)' \in \clu$ and if $\ybd \in \clu$ then
$\zbd(\ybd) \doteq (y_{(1)}-y_{(0)}, y_{(2)}-y_{(2)}, \ldots)' \in \clv$.
Thus given $\zbd \in \clv$ we can define a unique in law stochastic process $\{\Zbd(t)\}_{t\ge 0}$ with values in $\clv$ that can be viewed as a $\clv$-valued Markov process  referred to as the {\em gap process of the $\gbd$-Atlas model}. The Markov property of  $\Zbd(\cdot)$ needs justification which we have sketched for completeness in Remark \ref{MP} at the end of the section.

The following result identifies an important family of stationary distributions of this Markov process. The first statement in the theorem is from \cite[Theorem 1.6]{sarantsev2017stationary}. The second statement is due to \cite[Section 4.2]{AS} and \cite[Remark 4]{sarantsev2017stationary}. Recall $\cld_1$ defined in \eqref{cldzdefn}.
\begin{theorem}[\cite{sarantsev2017stationary,AS}]\label{thm:406n}
	Let $\gbd \in \cld$. Define for $n \in \NN$, $\bar g_n \doteq \frac{1}{n}(g_0 + \cdots + g_{n-1})$.
	Then for each $a > -2 \inf_{n \in \NN} \bar g_n$, the probability measure $\pi_a^{\gbd}$ on $\RR_+^{\infty}$ defined as 
	$$\pi_a^{\gbd} \doteq \otimes_{n=1}^{\infty} \operatorname{Exp}(n (2\bar g_n+a))$$
	is a stationary distribution for the  gap process of the $\gbd$-Atlas model.
	 Furthermore, if $\gbd \in \cld_1$, the above statement holds also for the case $a = -2 \inf_{n \in \NN} \bar g_n =-2\lim_{j \rightarrow \infty} \bar g_{N_j}$.
	In particular, when $\gbd = \gbd^1 \doteq (1, 0, \ldots)'$ (infinite Atlas model), 
	$\pi_a^{\gbd^1}\doteq \otimes_{n=1}^{\infty} \operatorname{Exp}(2 + na)$ is a stationary distribution for the  gap process for all $a \ge 0$.
\end{theorem}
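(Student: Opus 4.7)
The plan is to verify invariance of $\pi_a^{\gbd}$ by establishing a basic adjoint relationship (BAR) for the gap-process dynamics against cylinder exponential test functions, and to recover the stated rates as the unique positive solution of a linear recursion that the BAR yields. From \eqref{order_SDE} and \eqref{gapdef}, the gap process $\Zbd$ satisfies
\[
dZ_i(t) = (g_i - g_{i-1})\,dt + dM_i(t) + dL^*_i(t) - \tfrac12 dL^*_{i-1}(t) - \tfrac12 dL^*_{i+1}(t), \qquad i \ge 1,
\]
with $L^*_0 \equiv 0$, each $L^*_i$ supported on $\{Z_i=0\}$, and $\{M_i\}$ a continuous martingale with tridiagonal covariation ($\langle M_i \rangle_t = 2t$ and $\langle M_i,M_{i\pm 1}\rangle_t = -t$). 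Since the candidate rates $\lambda_n = n(2\bar g_n + a)$ grow linearly in $n$, the partial sums $\sum_{i\le n}Z_i$ grow almost surely under $\pi_a^{\gbd}$, hence $\pi_a^{\gbd}$ is concentrated on $\clv$ and the Markov semigroup provided by Theorem \ref{sarthm} acts on bounded cylinder functions.

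Next, I would fix a cylinder test function $f(\zbd) = \prod_{i=1}^N e^{-\alpha_i z_i}$ with $\alpha_i > 0$, apply Ito's formula to $f(\Zbd(t))$, and take expectations under $\pi_a^{\gbd}$. By the product structure, the drift and diffusion contributions factor cleanly: writing $\Phi = \prod_j \lambda_j/(\lambda_j + \alpha_j)$, the interior piece equals $\Phi \cdot \bigl[-\sum_i (g_i - g_{i-1}) \alpha_i + \sum_i \alpha_i^2 - \sum_i \alpha_i \alpha_{i+1}\bigr]$. The boundary term for $dL^*_i$ is handled by an occupation-density identity of the type later recorded as Lemma \ref{lem:lociden}: since $\pi_a^{\gbd}$ is product and the stationary density of $Z_i$ at $0$ equals $\lambda_i$ with diffusion coefficient $2$, $\EE_{\pi_a^{\gbd}}[L^*_i(t)] = \lambda_i t$, and conditioning on $Z_i = 0$ turns $\EE_{\pi_a^{\gbd}}[f \mid Z_i=0]$ into $\Phi(\lambda_i + \alpha_i)/\lambda_i$. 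Setting the derivative at $t=0$ to zero and collecting coefficients, the quadratic pieces $\alpha_i^2$ and $\alpha_i\alpha_{i+1}$ cancel identically, and one is left with the linear recursion
\[
\tfrac12 \lambda_{i+1} - \lambda_i + \tfrac12 \lambda_{i-1} = g_i - g_{i-1}, \qquad i \ge 1, \qquad \lambda_0 \doteq 0.
\]

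This recursion telescopes and is solved explicitly by $\lambda_n = n(2\bar g_n + a)$ with $a = \lambda_1 - 2g_0$ as the free parameter; positivity of all rates is equivalent to $a > -2\bar g_n$ for every $n$, i.e. $a > -2\inf_n \bar g_n$, matching the range in the theorem. For the boundary case $\gbd \in \cld_1$ and $a = -2\lim_j \bar g_{N_j}$, the defining property of $\cld_1$ forces $\bar g_n > \lim_j \bar g_{N_j}$ for every fixed $n$, so all $\lambda_n$ remain strictly positive and $\pi_a^{\gbd}$ is still a probability measure on $\RR_+^{\infty}$. Invariance at this boundary then follows by a limiting argument: take $a_k \downarrow a$ through admissible values, use coordinatewise weak convergence $\pi_{a_k}^{\gbd}\Rightarrow \pi_a^{\gbd}$ together with the Feller-type continuity of the $\gbd$-Atlas semigroup (a consequence of the strong uniqueness in Theorem \ref{sarthm}) to preserve stationarity in the limit.

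The main obstacle I anticipate is the rigorous closure of the BAR on cylinder functions in the infinite-dimensional state space. A cylinder $f$ depending only on $Z_1,\ldots,Z_N$ still involves, through $dL^*_{N+1}$ in the dynamics of $Z_N$, the $(N+1)$-th gap, so a priori the recursion at level $N$ could leak to $\lambda_{N+1}$. Decoupling this uses the product independence of $Z_{N+1}$ from $Z_1,\ldots,Z_N$ under $\pi_a^{\gbd}$, together with the summability $\sum g_i^2 < \infty$ and the tail control on the ranked process from Theorem \ref{sarthm}, to ensure that boundary contributions at large indices are finite and cancel out of the finite-index BAR, so that the linear recursion above is indeed the whole story.
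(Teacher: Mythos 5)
This theorem is not proved in the paper at all; the paper explicitly cites it from \cite{sarantsev2017stationary} (Theorem~1.6 and Remark~4 there) and \cite{AS} (Section~4.2), so the comparison must be against the approach in those references. Those works establish invariance by first working with the \emph{finite} $N$-particle gap process, which is a semimartingale reflecting Brownian motion in $\RR_+^{N-1}$; there one verifies the Harrison--Williams skew-symmetry condition and reads off the product-form exponential stationary law from the finite-dimensional theory, and then one passes to $N\to\infty$ via the monotonicity and approximative-version machinery of \cite{AS}. Your proposal instead attempts a direct infinite-dimensional basic adjoint relationship (BAR) on cylinder exponentials. That is a genuinely different route, and the algebra you carry out (tridiagonal bracket, cancellation of the $\alpha_i^2$ and $\alpha_i\alpha_{i+1}$ terms, and the resulting recursion $\tfrac12\lambda_{i+1}-\lambda_i+\tfrac12\lambda_{i-1}=g_i-g_{i-1}$ with $\lambda_0=0$) is consistent with the identity the paper itself derives later in \eqref{eq:nuheq}. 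But as a self-contained proof it has two essential gaps.

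First, the key input in your boundary computation --- ``$\EE_{\pi_a^{\gbd}}[L^*_i(t)]=\lambda_i t$ because the density of $Z_i$ at $0$ is $\lambda_i$'' --- is not available a priori. The equality between the density of $\pi_i$ at $0$ and the expected local-time rate is \emph{itself} a consequence of stationarity: it is exactly what the paper proves in Lemma~\ref{lem:lociden}, and the derivation there (see the passage from \eqref{eq:548n} to \eqref{eq:602n}) explicitly uses ``the fact that $\Zbd$ is a stationary process''. If you start the gap process from the candidate measure $\pi_a^{\gbd}$ without knowing it is invariant, then $\PP(Z_i(s)\in[0,\eps])$ need not equal $\pi_i[0,\eps]$ for $s>0$, and the occupation-time limit no longer yields $\lambda_i t$. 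So the BAR you write down assumes the conclusion. Second, even granting the BAR identity for all cylinder exponentials, the passage from ``the formal generator integrates to zero against $\pi_a^{\gbd}$'' to ``$\pi_a^{\gbd}$ is invariant for the semigroup'' is not automatic here: there is no Feller/core theory asserted for this infinite-dimensional obliquely reflected diffusion, and you would need to supply a closure-of-BAR argument (or, as the references do, reduce to the finite-$N$ SRBM where Dai--Harrison/Harrison--Williams theory applies and then pass to the limit). Your final-paragraph treatment of the boundary case $a=-2\lim_j\bar g_{N_j}$ has the same flavor of gap: ``Feller-type continuity of the $\gbd$-Atlas semigroup'' is not something Theorem~\ref{sarthm} gives you (it gives weak well-posedness, not a Feller semigroup on a fixed state space), so the claimed passage to the limit $a_k\downarrow a$ is unsupported as stated. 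The cleanest repair is the one the cited references actually use: prove the statement for each finite truncation via skew-symmetry, then use the monotone coupling/approximative-version limit from \cite{AS} to transfer stationarity to the infinite model.
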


The existence of the limit $\lim_{j \rightarrow \infty} \bar g_{N_j}$ and the equality $\inf_{n \in \NN} \bar g_n =\lim_{j \rightarrow \infty} \bar g_{N_j}$ follow from the definition of $\cld_1$ (see first paragraph of Section \ref{sec:mainsecthm}).

As an immediate consequence of Kakutani's theorem \cite{kakutani1948equivalence} we see that for any $\gbd \in \cld$ and $a, a' > -2 \inf_{n \in \NN} \bar g_n$ (and $a, a' \ge -2\inf_{n \in \NN} \bar g_n$ when $\gbd \in \cld_1$), $a \neq a'$, the measures
$\pi_a^{\gbd}$ and $\pi_{a'}^{\gbd}$ are mutually singular.

\begin{remark}[Markov property of the gap process]\label{MP}
The ranked process $\{Y_{(i)}(\cdot)\}_{i \in \NN_0}$ is formally constructed in \cite{AS} as an `approximative version' using limits of finite-dimensional reflected diffusions. Namely, for any fixed $k \in \mathbb{N}_0$, $\{Y_{(i)}(\cdot)\}_{0 \le i \le k}$ is obtained as an almost sure limit as $m \rightarrow \infty$, uniformly over compact time intervals, of the first $k+1$ coordinates of the reflected diffusion $\{Y_{(i)}^{m}(\cdot)\}_{0 \le i \le m}$ given by  the SDE \eqref{order_SDE} with $Y_{(i)}^m(0) = y_{(i)}$, $L^*_{m+1}(\cdot) \equiv 0$, and a given collection $\{B_i^*\}_{i\in \NN_0}$ of iid standard Brownian motions (see also Definition \ref{def:sttappver}). Fix any time $t \ge 0$. Define for any $m \in \mathbb{N}_0$ the process $\{Y^{m,t}_{(i)}(t + \cdot)\}_{0 \le i \le m}$ similarly by setting $Y_{(i)}^{m,t}(t) = Y_{(i)}(t)$ and driven by the Brownian motions { $\{B^*_i(t+\cdot) - B^*_i(t)\}_{0 \le i \le m}$} via the SDE \eqref{order_SDE} (again with the local time for $i=m+1$ set to zero). Define $Z^{m,t}_i$, $1\le i \le m$, the gap process sequence associated with
$Y_{(i)}^{m,t}$, $0\le i \le m$.
Fix $k \in \mathbb{N}_0$, $s>0$, and any bounded continuous function 
$f: \mathbb{R}^k \rightarrow \mathbb{R}$. Let $\Gmc_{\le t} \doteq \sigma \{Z_{i}(u): u\le t, i \in \NN\}$ and $\Gmc_{t} \doteq \sigma \{Z_{i}(t): i \in \NN\}$. For any $m \ge k$ and $t>0$, as { $\{B^*_i(t+\cdot) - B^*_i(t)\}_{i \in \mathbb{N}_0}$} is independent of $\Gmc_{\le t}$,
$$
\mathbb{E}\left(f\left(Z^{m,t}_{1}(t + s),\dots, Z^{m,t}_{k}(t + s)\right)\, \Big|\, \Gmc_{\le t}\right) = \mathbb{E}\left(f\left(Z^{m,t}_{1}(t + s),\dots, Z^{m,t}_{k}(t + s)\right)\, \Big| \,\Gmc_t\right).
$$
Thus, to deduce the Markov property, it suffices to show that $\left(Z^{m,t}_{1}(t + s),\dots, Z^{m,t}_{k}(t + s)\right)$ converges almost surely to $\left(Z_{1}(t + s),\dots, Z_{k}(t + s)\right)$ as $m \rightarrow \infty$ which will follow from the a.s. convergence of
$\left(Y^{m,t}_{(0)}(t + s),\dots, Y^{m,t}_{(k)}(t + s)\right)$  to $\left(Y_{(0)}(t + s),\dots, Y_{(k)}(t + s)\right)$. The latter can be shown by exploiting monotonicity properties of rank-based diffusions \cite[Theorem 3.2 and Corollary 3.9]{AS2} as follows.

Fix $\epsilon>0$. By the construction of the process $\{Y_{(i)}(\cdot)\}_{i \in \NN_0}$ in \cite{AS}, we can find (random) $m_0 \in \mathbb{N}_0$ such that, for any 
$m \ge m_0$,  $|Y^{m}_{(i)}(t + s) - Y_{(i)}(t + s)| < \epsilon$, for $0\le i \le k$. Now fix $m \ge m_0$.
Note that $Y^{m,t}_{(i)}(t) \le Y^{m}_{(i)}(t)$ for $ 0 \le i \le m$ by \cite[Corollary 3.9]{AS2}, and hence by  \cite[Theorem 3.2]{AS2}, 
$$
{ Y^{m,t}_{(i)}(t+s) \le Y^{m}_{(i)}(t+s) \le Y_{(i)}(t+s) + \epsilon  \ \text{ for } \  0 \le i \le m.}
$$
To construct a lower bounding process for $Y^{m,t}_{(i)}(t+s)$, define for $m' \ge m$ the process $\{Y^{m',m,t}_{(i)}(t + \cdot)\}_{0 \le i \le m}$ started with $Y^{m',m,t}_{(i)}(t) = Y^{m'}_{(i)}(t)$ and driven by the Brownian motions { $\{B^*_i(t+\cdot) - B^*_i(t)\}_{0 \le i \le m}$} via the SDE \eqref{order_SDE} (and again setting the local time for $i=m+1$ to be zero). 
{ By the construction of the process $\{Y_{(i)}(\cdot)\}_{i \in \NN_0}$ in \cite{AS}, one can choose $m' = m'(m,\epsilon)$ large enough so that
$Y^{m'}_{(i)}(t) \le Y_{(i)}(t)+ \epsilon$ for $0 \le i \le m$. Moreover, if we consider the  translation of the system $\{Y^{m,t}_{(i)}(t + \cdot)\}_{0 \le i \le m}$ by $\epsilon$, then the $(m+1)$-particle process started from $\{Y_{(i)}(t)+ \epsilon : 0 \le i \le m\}$ at time $t$ and driven by the Brownian motions $\{B^*_i(t+\cdot) - B^*_i(t)\}_{0 \le i \le m}$ has particle locations at time $t+s$ given by $\{Y^{m,t}_{(i)}(t+s)+ \epsilon : 0 \le i \le m\}$. Hence, using \cite[Theorem 3.2 and Corollary 3.9]{AS2}, we have
\begin{equation}
Y_{(i)}(t+s) - \epsilon \le Y^{m'}_{(i)}(t + s) \le Y^{m',m,t}_{(i)}(t + s) \le Y^{m,t}_{(i)}(t+s) + \epsilon \ \text{ for } \  0 \le i \le m.
\end{equation}
The first inequality above holds because $m'\ge m \ge m_0$. 
We conclude from the above two displays that}
\begin{equation}
Y_{(i)}(t + s) - 2\epsilon \le Y^{m,t}_{(i)}(t+s) \le Y_{(i)}(t + s) + \epsilon \ \text{ for } \  0 \le i \le k.
\end{equation}
This gives the desired almost sure convergence as $\epsilon>0$ is arbitrary.
\end{remark}

\section{Main Results}\label{sec:mr}

We are interested in the extremality properties of the probability measures $\pi_a^{\gbd}$. We also ask whether these are the only product form stationary distributions.

We begin with some notation.
{\SB Recall $\XX \doteq  \clc([0,\infty): \RR_+^{\infty})$.} 
Define measurable maps $\{\theta_t\}_{t\ge 0}$ from  $(\XX, \clb(\XX))$ to itself as
$$\theta_t(\Zbd)(s) \doteq \Zbd(t+s), \; t\ge 0, s\ge 0, \; \Zbd \in \XX.$$
Given $\gbd \in \cld$ and $\zbd \in \clv$, we denote the probability distribution of the gap process of the $\gbd$-Atlas model on $(\XX, \clb(\XX))$, with initial gap sequence $\zbd$, by
$\PP^{\gbd}_{\zbd}$. Also, for $\gamma \in \clp(\RR_+^{\infty})$ supported on $\clv$ (namely, $\gamma(\clv)=1$), let $\PP^{\gbd}_{\gamma} \doteq \int_{\RR_+^{\infty}} \PP^{\gbd}_{\zbd} \;\gamma(d\zbd)$. The corresponding expectation operators will be denoted as $\EE^{\gbd}_{\zbd}$ and $\EE^{\gbd}_{\gamma}$ respectively.
Denote by $\cli^{\gbd}$ the collection of all invariant (stationary) probability measures of the gap process of the $\gbd$-Atlas model supported on $\clv$, namely
$$ \cli^{\gbd} \doteq \{\gamma \in \clp(\XX):  \gamma(\clv^c)=0, \mbox{ and } \PP^{\gbd}_{\gamma} \circ \theta_t^{-1} = \PP^{\gbd}_{\gamma} \mbox{ for all } t \ge 0\}.$$
%

Abusing notation, the canonical coordinate process on $(\XX, \clb(\XX))$ will be denoted by $\{\Zbd(t)\}_{t\ge 0}$.
 Let $\clm(\RR_+^{\infty})$ be the collection of all real measurable maps on $\RR_+^{\infty}$. For $f \in \clm(\RR_+^{\infty})$, $\zbd \in \RR_+^{\infty}$ and $t\ge 0$ such that $\EE^{\gbd}_{\zbd}(|f(\Zbd(t))|) <\infty$
 we write
 \begin{equation}\label{eq:semgpdep}
	 T_t^{\gbd}f(\zbd) \doteq \EE^{\gbd}_{\zbd}(f(\Zbd(t))).\end{equation}
Note that for $\gbd \in \cld$, $\gamma \in \cli^{\gbd} $, and
$\psi \in L^2(\gamma)$, $T_t^{\gbd}\psi$ is $\gamma$ a.e. well defined and belongs to $L^2(\gamma)$. Furthermore, the collection $\{T_t^{\gbd}\}_{t\ge 0}$ defines a contraction semigroup on $L^2(\gamma)$, namely
$$T_t^{\gbd}T_s^{\gbd} \psi = T_{t+s}^{\gbd}\psi, \mbox{ and } \|T_t^{\gbd}\psi\| \le \|\psi\| \mbox{ for all } s,t \ge 0 \mbox{ and } \psi \in L^2(\gamma).$$

We now recall the definition of extremality and ergodicity. 
Let, for $\gbd$  as above and $\gamma \in \cli^{\gbd}$, $\II_{\gamma}^{\gbd} $ be the collection of all $T_t^{\gbd}$-invariant square integrable functions, namely,
$$\II_{\gamma}^{\gbd} \doteq \{\psi \in L^2(\gamma): T_t^{\gbd}\psi = \psi,\;  \gamma  \mbox{ a.s., for all } t \ge 0\}.$$
We denote the projection of a $\psi \in L^2(\gamma)$ on to the closed subspace $\II_{\gamma}^{\gbd}$ as $\hat \psi_{\gamma}^{\gbd}$. {\SB 
Namely, $\hat \psi_{\gamma}^{\gbd}$ is the unique element of $\II_{\gamma}^{\gbd}$ that satisfies
$$\langle \psi, \eta \rangle = \langle \hat \psi_{\gamma}^{\gbd}, \eta \rangle, \mbox{ for all } \eta \in \II_{\gamma}^{\gbd}.$$
This projection can be obtained as the limit of $\frac{1}{t} \int_0^t T_s^{\gbd} \psi \,  ds$ in $L^2(\gamma)$ as $t \rightarrow \infty$ (see \eqref{eq:334}). Thus, for any $\psi \in L^2(\gamma)$, $\hat \psi_{\gamma}^{\gbd}(\cdot)$ can be intuitively interpreted as the `long-time average' of $\{\EE^{\gbd}_{\cdot}(\psi(\Zbd(t)) : t \ge 0\}$.}
\begin{definition}
	Let $\gbd\in \cld$. A  $\nu \in \cli^{\gbd}$ is said to be an extremal invariant distribution of the gap process of the $\gbd$-Atlas model if, whenever for some $\veps \in (0,1)$ and  $\nu_1, \nu_2 \in \cli^{\gbd}$  we have $\nu = \veps \nu_1 + (1-\veps)\nu_2$, then $\nu_1 = \nu_2 = \nu$.
	We denote the collection of all such measures by $\cli^{\gbd}_e$.
	
	We call $\nu \in \cli^{\gbd}$ an ergodic probability measure for the invariant distribution of the gap process of the $\gbd$-Atlas model if for all
	$\psi \in L^2(\nu)$,  $\hat \psi_{\nu}^{\gbd}$ is constant $\nu$-a.s. We denote the collection of all such measures by $\cli^{\gbd}_{er}$.
\end{definition}
We note that (cf. proof of Lemma \ref{lemerg} below) if $\gamma \in \cli^{\gbd}_{er}$, then for any $\psi \in L^2(\gamma)$, 
$$\frac{1}{t} \int_0^t T_s^{\gbd}\psi(\cdot) ds \to \int_{\RR_+^{\infty}} \psi(\xbd) \gamma(d\xbd), \mbox{ in } L^2(\gamma), \mbox{ as } t \to \infty.$$

The following result, which says that extremal invariant measures and ergodic invariant measures are the same, is standard, however we provide a proof in the appendix for completeness.
\begin{lemma}\label{lemerg}
	Let $\gbd \in \cld$. Then $\cli^{\gbd}_e = \cli^{\gbd}_{er}$.
	Let $\gamma \in \cli^{\gbd}$ and suppose that for every bounded measurable $\psi:\RR_+^{\infty} \to \RR$, 
	$\hat \psi_{\gamma}^{\gbd}$ is constant, $\gamma$ a.s. Then
	$\gamma \in \cli^{\gbd}_e$.
\end{lemma}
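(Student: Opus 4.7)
The plan is to work entirely inside the Hilbert space $L^2(\gamma)$ and exploit three facts about $T_t^{\gbd}$ that follow from its being a Markov contraction semigroup for which $\gamma \in \cli^{\gbd}$ is an invariant measure: (i) $T_t^{\gbd} 1 = 1$ and, by duality, $(T_t^{\gbd})^* 1 = 1$; (ii) both $T_t^{\gbd}$ and $(T_t^{\gbd})^*$ are positivity preserving; and (iii) the $T_t^{\gbd}$-invariant subspace of $L^2(\gamma)$ coincides with the $(T_t^{\gbd})^*$-invariant subspace. Fact (iii) is the standard Hilbert-space observation that for a contraction $T$, $T\psi = \psi$ gives $\lan T^*\psi, \psi\ran = \lan \psi, T\psi\ran = \|\psi\|^2$; combined with $\|T^*\psi\| \le \|\psi\|$, the equality case of Cauchy--Schwarz forces $T^*\psi = \psi$. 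A useful corollary of (i)--(iii) is that the invariant subspace is a vector lattice: if $f$ is invariant, then so are $f^+$, $f^-$ and $f \wedge M$ for any $M > 0$ (to see that $(T_t^{\gbd})^*(f\wedge M)=f\wedge M$, combine $(T_t^{\gbd})^*(f\wedge M)\le f\wedge M$ with the $\gamma$-integral identity $\int (T_t^{\gbd})^*(f\wedge M)\, d\gamma = \int (f\wedge M)\, d\gamma$).

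For the inclusion $\cli^{\gbd}_{er} \subseteq \cli^{\gbd}_e$, I would start from $\gamma = \veps\nu_1 + (1-\veps)\nu_2$ with $\nu_i \in \cli^{\gbd}$. Then $\nu_1 \ll \gamma$ with density $f = d\nu_1/d\gamma \in L^\infty(\gamma)\subset L^2(\gamma)$, and the invariance $\int T_t^{\gbd}\psi \, d\nu_1 = \int \psi \, d\nu_1$ rewrites as $(T_t^{\gbd})^* f = f$. By (iii), $f$ is $T_t^{\gbd}$-invariant, hence equal to its own projection $\hat f_\gamma^{\gbd}$, which is constant by ergodicity; normalization forces $f \equiv 1$ $\gamma$-a.s., so $\nu_1 = \gamma$ and, symmetrically, $\nu_2 = \gamma$.

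The reverse inclusion I prove by contrapositive: if $\gamma \notin \cli^{\gbd}_{er}$, there is a non-constant $\psi \in \II_\gamma^{\gbd}$, hence by (iii) a non-constant $(T_t^{\gbd})^*$-invariant function, which by the lattice property may be replaced by a non-negative, bounded, non-constant $(T_t^{\gbd})^*$-invariant $f$. Normalizing so $\int f \, d\gamma = 1$ and setting $M := \esssup_\gamma f > 1$, I would define two distinct elements of $\cli^{\gbd}$ by $d\nu_1 = f\, d\gamma$ and $d\nu_2 = (M-f)/(M-1)\, d\gamma$ (invariance using $(T_t^{\gbd})^* f = f$ and $(T_t^{\gbd})^* 1 = 1$; support on $\clv$ from $\nu_i \ll \gamma$), yielding the non-trivial decomposition $\gamma = M^{-1}\nu_1 + (1 - M^{-1})\nu_2$, which contradicts $\gamma \in \cli^{\gbd}_e$.

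For the final assertion, the hypothesis that $\hat\psi_\gamma^{\gbd}$ is constant for every bounded measurable $\psi$ immediately forces every bounded $T_t^{\gbd}$-invariant function to coincide with its projection and hence to be constant. To upgrade this to general $\psi \in \II_\gamma^{\gbd}$, I would once more invoke the lattice property: $(\psi \wedge M)\vee(-M)$ is $T_t^{\gbd}$-invariant, hence constant $\gamma$-a.s., and sending $M \to \infty$ shows $\psi$ itself is constant; thus $\gamma \in \cli^{\gbd}_{er}$, and the first part yields $\gamma \in \cli^{\gbd}_e$. There is no serious conceptual obstacle here, but the one technical point that deserves careful verification is the lattice closure of the invariant subspace under truncation, since all three parts of the argument rely on it.
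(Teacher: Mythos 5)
Your proof is correct, but it takes a genuinely different operator-theoretic route from the paper's, which is worth comparing. For the inclusion $\cli^{\gbd}_{er}\subseteq\cli^{\gbd}_e$ the paper proves a mean ergodic theorem for the contraction semigroup: it decomposes $L^2(\gamma)$ into $[(\mbox{span}\,\Gmc)^{cl}]^{\perp}\oplus(\mbox{span}\,\Gmc)^{cl}$ with $\Gmc=\{\eta-T_t^{\gbd}\eta\}$, shows $A_t\psi\doteq\frac{1}{t}\int_0^t T_s^{\gbd}\psi\,ds\to\hat\psi^{\gbd}_\gamma$ in $L^2(\gamma)$, and then exploits $A_t\psi\to\int\psi\,d\gamma$ in $L^2(\gamma_1)$ together with the $\gamma_1$-invariance $\int A_t\psi\,d\gamma_1=\int\psi\,d\gamma_1$. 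You avoid time averages entirely: you use the adjoint semigroup, the contraction fact that $T_t^{\gbd}\psi=\psi$ forces $(T_t^{\gbd})^*\psi=\psi$, and then read off the constancy of the Radon--Nikodym density $f=d\nu_1/d\gamma$ from ergodicity. Both are standard, but yours bypasses the von Neumann-style averaging argument at the cost of introducing the adjoint formalism (and you must verify $(T_t^{\gbd})^*1=1$ and positivity of $(T_t^{\gbd})^*$, which you correctly sketch). For $\cli^{\gbd}_e\subseteq\cli^{\gbd}_{er}$ the two arguments are closer in spirit: the paper works with the indicator of $A=\{\hat\psi^{\gbd}_\gamma>c\}$, shows $1_A$ is $T_t^{\gbd}$-invariant by the same lattice-closure reasoning you isolate, and forms the decomposition $\gamma=\veps\gamma_1+(1-\veps)\gamma_2$ by restricting $\gamma$ to $A$ and $A^c$; you instead take an arbitrary bounded non-negative non-constant invariant $f$ with $\int f\,d\gamma=1$ and split $\gamma=M^{-1}\nu_1+(1-M^{-1})\nu_2$ with $d\nu_1=f\,d\gamma$, $d\nu_2=\frac{M-f}{M-1}\,d\gamma$, $M=\mbox{ess\,sup}\,f$. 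Both hinge on the lattice closure of the invariant subspace (the point you flag as needing care); your sketch of that lemma, via $(T_t^{\gbd})^*(f\wedge M)\le f\wedge M$ together with $\int(T_t^{\gbd})^*(f\wedge M)\,d\gamma=\int(f\wedge M)\,d\gamma$, is the right argument and is essentially what the paper does when it shows $f\vee 0$ is harmonic. Your handling of the final assertion (truncate, invoke constancy, send $M\to\infty$) is also correct, with the minor point that one should observe that the truncation constant cannot equal $\pm M$ for large $M$ because $\psi\in L^1(\gamma)$.
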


The following is the first main result of this work.
\begin{theorem}\label{thm_main}
	Let $\gbd\in \cld$. Then, for every $a > -2 \inf_{n \in \NN} \bar g_n$, $\pi_a^{\gbd} \in \cli^{\gbd}_e = \cli^{\gbd}_{er}$.  Furthermore, when $\gbd \in \cld_1$, $\pi_a^{\gbd} \in \cli^{\gbd}_e = \cli^{\gbd}_{er}$ also for $a = -2 \inf_{n \in \NN} \bar g_n$.
\end{theorem}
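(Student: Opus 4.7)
The plan is to apply Lemma~\ref{lemerg} and reduce the claim to showing that every bounded measurable $\psi:\RR_+^{\infty}\to\RR$ with $T_t^{\gbd}\psi=\psi$ $\pi_a^{\gbd}$-a.s.\ for every $t\ge 0$ is $\pi_a^{\gbd}$-a.s.\ constant. Since the coordinates $(Z_n)$ are generally not iid under $\pi_a^{\gbd}$, the Hewitt--Savage zero-one law does not apply directly. To fix this, set $c_n\doteq 2[n(2\bar g_n+a)]^{-1}$ so that the rescaled coordinates $\Ztil_n\doteq c_n^{-1}Z_n$ are iid $\operatorname{Exp}(1)$ under $\pi_a^{\gbd}$; Hewitt--Savage applied to $(\Ztil_n)_{n\ge 1}$ reduces the task to showing that, for every $i\in\NN$, $\psi$ is $\pi_a^{\gbd}$-a.s.\ invariant under the map
\begin{equation*}
T_i\zbd\doteq (z_1,\ldots,z_{i-1},\tfrac{c_i}{c_{i+1}}z_{i+1},\tfrac{c_{i+1}}{c_i}z_i,z_{i+2},\ldots)',
\end{equation*}
which swaps $\Ztil_i$ and $\Ztil_{i+1}$ and therefore preserves $\pi_a^{\gbd}$.

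Fix $i\in\NN$ and let $\zbd'=T_i\zbd$. The $T_t^{\gbd}$-invariance of $\psi$ gives, for any coupling of the gap processes $\Zbd,\Zbd'$ started from $\zbd,\zbd'$,
\begin{equation*}
|\psi(\zbd)-\psi(\zbd')| = |T_t^{\gbd}\psi(\zbd)-T_t^{\gbd}\psi(\zbd')| \le 2\|\psi\|_\infty\,\PP(\Zbd(t)\ne\Zbd'(t)),\qquad t\ge 0,
\end{equation*}
so it suffices to construct a coupling for which the right-hand side tends to $0$ as $t\to\infty$ for $\pi_a^{\gbd}$-a.e.\ $\zbd$, i.e.\ to establish a directional strong Feller property (the content of Proposition~\ref{prop:prop2}). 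The coupling is built at the level of the ranked SDE \eqref{order_SDE}: the driving Brownian motions $B_0^*,\ldots,B_i^*$ are joined by a multidimensional mirror (reflection) coupling chosen so that the bottom $i+1$ ranks in the two systems coalesce, while $B_{i+1}^*,B_{i+2}^*,\ldots$ are coupled synchronously. Passing to the unranked representation $\ybd(\cdot)$ from \eqref{clvdef}, the action of $T_i$ leaves ranks $0,\ldots,i-1$ unchanged, shifts rank $i$ by an explicit $\delta$, and translates ranks $\ge i+1$ by a common amount that the synchronous pairing preserves.

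Let $\tau$ denote the mirror coupling time and let $\sigma$ be the first time that either the collision local time $L^*_{i+1}$ (in either system) becomes positive or any of the first $i$ gaps (in either system) hits zero. On the event $\{\tau<\sigma\}$ the bottom $i+1$ ranked particles in each system decouple from the higher ranks on $[0,\tau]$ and evolve as $i+1$ independent Brownian motions with drift; the mirror component is then a one-dimensional Brownian motion with Gaussian coupling-time tails in $\delta$, and once it couples, a synchronous final phase yields $\Zbd\equiv\Zbd'$ on $[\tau,\infty)$. Thus
\begin{equation*}
\PP(\Zbd(t)\ne\Zbd'(t))\le\PP(\tau\ge t\wedge\sigma)+\PP(\sigma\le\tau\wedge t),
\end{equation*}
and the main technical work — and the central obstacle — is to drive both probabilities to zero on a $\pi_a^{\gbd}$-typical set of initial conditions. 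The first term is handled by Gaussian tail bounds for $\tau$ together with an iteration over time windows of size comparable to $\sigma$; the second, more delicate, requires quantitative bounds on the non-activation of $L^*_{i+1}$, which involves control of intersection local times in the infinite system and accounts for the bulk of the difficulty relative to the finite-particle setting.

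Granted the coupling estimate, $\psi=\psi\circ T_i$ $\pi_a^{\gbd}$-a.s.\ for every $i\in\NN$, and the Hewitt--Savage zero-one law applied to $(\Ztil_n)_{n\ge 1}$ forces $\psi$ to be $\pi_a^{\gbd}$-a.s.\ constant, proving $\pi_a^{\gbd}\in\cli^{\gbd}_e=\cli^{\gbd}_{er}$. The boundary case $\gbd\in\cld_1$, $a=-2\inf_n\bar g_n$, is handled identically: the rescaled coordinates $\Ztil_n$ remain iid $\operatorname{Exp}(1)$, and although the scales $c_{N_j}$ diverge along the subsequence $(N_j)$ in the definition of $\cld_1$, the coupling estimates depend on those scales only through the explicit action of $T_i$ on the initial condition and are robust to the degeneration.
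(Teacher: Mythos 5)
Your high-level reduction — apply Lemma~\ref{lemerg}, rescale $\tilde Z_n = c_n^{-1}Z_n$ to an iid sequence, reduce to invariance of $\psi$ under the coordinate-swap maps $T_i$, and finish with Hewitt--Savage — matches the paper exactly. The gap is in how you propose to prove the $T_i$-invariance.

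You bound $|\psi(\zbd)-\psi(T_i\zbd)| \le 2\|\psi\|_\infty\,\PP(\Zbd(t)\neq\Zbd'(t))$ and then assert it suffices to construct a coupling for which this probability tends to $0$ as $t\to\infty$, identifying that with Proposition~\ref{prop:prop2}. This is not what Proposition~\ref{prop:prop2} says, and the paper never proves (nor needs) a coupling probability that tends to $1$ as $t\to\infty$. For two initial conditions at a \emph{fixed}, non-vanishing distance (which is the case for $\zbd$ and $T_i\zbd$), proving successful coupling with probability approaching $1$ in the infinite system would indeed require uniform-in-time control of the interaction of the bottom $i+1$ particles with the rest, which is precisely the kind of estimate the paper avoids. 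Instead the paper first uses the $L^2$/stationarity computation $\EE_\gamma(\psi(\Zbd(t))-\psi(\Zbd(0)))^2 = 0$ to conclude the pathwise invariance $\psi(\Zbd(t))=\psi(\Zbd(0))$, $\PP_\gamma$-a.s.\ for every $t$. Combined with a translation trick (shifting the $i$-th coordinate up by $\delta_1$ and the $(i+1)$-th down by $\delta_2$, both $\pi_a^{\gbd}$-absolutely continuous operations), this yields $\psi(\Zbd(t))=\psi(\zbd)$ $\PP_\zbd$-a.s.\ and $\psi(\Zbd(t))=\psi(\zbd^{\delta_1,\delta_2,i})$ $\PP_{\zbd^{\delta_1,\delta_2,i}}$-a.s.\ for $\gamma$-a.e.\ $\zbd$. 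Given this, it suffices that the mirror/synchronous coupling succeed with merely \emph{positive} probability at some fixed time $s$ (Proposition~\ref{prop:prop1}): on the coupling event $\psi(\zbd)=\psi(\Zbd^{(1)}(s))=\psi(\Zbd^{(2)}(s))=\psi(\zbd^{\delta_1,\delta_2,i})$. This is why the paper can get away with a single positive-probability estimate rather than a convergence-to-coalescence estimate.

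Two further points. First, Proposition~\ref{prop:prop2} is a directional \emph{right-continuity} statement in $(\delta_1,\delta_2)$ at a fixed small time $t_1$; it is used to upgrade the translation-invariance from rational $\delta_1,\delta_2$ to all of them, not to send $t\to\infty$. The continuity estimate is driven by making $\delta_1,\delta_2$ small (so the mirror reflection hyperplane is hit quickly), not by waiting longer. Second, you apply the coupling directly to the pair $(\zbd, T_i\zbd)$, but the map $T_i$ is a permutation-with-rescaling, not a translation, and in the unranked picture shifts the upper block by a $\zbd$-dependent amount; the paper handles only translations $\zbd\mapsto\zbd+\delta_1\ebd_i-\delta_2\ebd_{i+1}$ via coupling, and then deduces $T_i$-invariance from translation-invariance by choosing $\delta_1,\delta_2$ as explicit functions of $z_i,z_{i+1}$ (using that for the exponential product, translations in those two directions generate the required coordinate swap). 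As written, your argument neither establishes the required coupling estimate nor isolates the translations, so it does not yet close.
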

The above theorem proves the extremality of the invariant measures $\pi_a^{\gbd}$ for suitable values of $a$. 
{\SB As an immediate consequence of this theorem one can identify natural collections of measures that are in the (weak) domain of attraction of a given $\pi^{\gbd}_a$, as noted in the corollary below.
}
{\SB 
{\SB We recall that a measure $\gamma \in \clp(\RR_+^{\infty})$ is said to be in the weak (resp. strong) domain of attraction of $\pi^{\gbd}_a$ if for any bounded continuous function $\psi: \RR_+^{\infty} \rightarrow \RR$,
$$
\frac{1}{t}\int_0^t \EE^{\gbd}_{\gamma}(\psi(\Zbd(s)))ds \to \int_{\RR_+^{\infty}} \psi(\xbd) \pi^{\gbd}_a(d\xbd), 
$$
(resp. $\EE^{\gbd}_{\gamma}(\psi(\Zbd(t))) \to \int_{\RR_+^{\infty}} \psi(\xbd) \pi^{\gbd}_a(d\xbd)$), as $t \to \infty$.}
\begin{corollary}\label{doacor}
Let $\gbd\in \cld$. Fix any $a > -2 \inf_{n \in \NN} \bar g_n$. Let $\gamma \in \clp(\RR_+^{\infty})$ be absolutely continuous with respect to $\pi^{\gbd}_a$. Then $\gamma$ lies in the weak domain of attraction of $\pi^{\gbd}_a$. The assertion holds for any $a \ge -2 \inf_{n \in \NN} \bar g_n$ if $\gbd \in \cld_1$.
\end{corollary}
Sufficient conditions for a probability measure to be in the strong domain of attraction of $\pi^{\gbd^1}_0$ were obtained in 
\cite{AS, DJO} whereas weak domain of attraction results for $\pi^{\gbd^1}_a$, $a \ge 0$, have been obtained in \cite{banerjee2022domains}. The above corollary provides a weak domain of attraction result for 
 a  general class of $\gbd$-Atlas models.
}

One can ask whether these are the only extremal invariant measures of the gap process of the $\gbd$-Atlas model supported on $\clv$. As noted in the Introduction, the answer to this question when $\gbd = \zero$ is affirmative from results of \cite[Theorem 4.2]{RuzAiz} , if one restricts to extremal measures satisfying certain  integrability constraints on the denseness of particle configurations.
For a general $\gbd \in \cld$ (in fact even for $\gbd = \gbd^1$) this is currently a challenging open problem. However, we make partial progress towards this goal in the next result by showing that for any $\gbd \in \cld_1$ (and under a mild integrability condition), the collection $\{\pi_a^{\gbd}\}$ exhausts all the extremal product form invariant distributions. In fact we prove the substantially stronger statement that  the measures $\pi_a^{\gbd}$ are the only product form (extremal or not) invariant distributions under a mild integrability condition. Qualitatively, this result says  that these measures are  the only invariant distributions that preserve independence of the marginal laws of the gaps in time.

\begin{theorem}\label{thm_main2}
	Let $\gbd \in \cld_1$ and let $\pi \in \cli^{\gbd}$ be a product measure, i.e. $\pi = \otimes_{i=1}^{\infty} \pi_i$ for some $\pi_i \in \clp(\RR_+)$, $i \in \NN$. {\SB With $F(\zbd) \doteq \sum_{j=1}^{\infty} e^{-\frac{1}{4} (\sum_{l=1}^j z_l)^2}, \, \zbd \in \RR_+^{\infty}$, suppose that
    \begin{equation}\label{eq:satinteg}
        \int_{\RR_+^{\infty}} F(\zbd) \pi(d\zbd) <
        \infty.
    \end{equation}}
    Then, for some {\SB $a\ge -2 \inf_{n \in \NN} \bar g_n$}, $\pi = \pi_a^{\gbd}$. {\SB Moreover, $a$ has the representation
 \begin{equation}\label{arep}
 a = \EE^{\gbd}_{\pi}(L^*_1(1)) - 2g_0 = \frac{\EE^{\gbd}_{\pi}(L^*_k(1))}{k} - \frac{2}{k}(g_0 + \dots + g_{k-1}),
 \end{equation}
 for any $k \in \NN$, where $\{L^*_i\}_{i \in \NN}$ denote the collision local times in \eqref{order_SDE}.}
\end{theorem}
 Recall that $\clv$ defined in \eqref{clvdef} consists of $\zbd \in \RR_+^{\infty}$ for which $\sum_{j=1}^{\infty}e^{-\alpha (\sum_{l=1}^j z_l)^2} < \infty$ for all $\alpha>0$. In comparison, condition \eqref{eq:satinteg} requires a finite expectation of $\sum_{j=1}^{\infty}e^{-\frac{1}{4}(\sum_{l=1}^j z_l)^2}$ when $\zbd$ is distributed as $\pi$. 
Roughly speaking, condition  \eqref{eq:satinteg}
 puts a restriction on the rate of increase of the density of particles as one moves away from the lowest ranked particle.
 
 Several remarks are in order.
 
\begin{remark}[Probabilistic interpretation of $a$]\label{arem}
 The equalities \eqref{arep} give a probabilistic interpretation to $a$. By stationarity of $\pi$, $\EE^{\gbd}_{\pi}(L^*_1(1))$ can be thought of as the expected rate of change of the local time $L^*_1$. Hence, $\frac{\EE^{\gbd}_{\pi}(L^*_1(1))}{2}$ is intuitively the expected rate at which the bottom particle is `pushed down' by the particle above it during collisions and $g_0$ denotes its upward drift in time. Thus, $a/2$ captures the difference between two kinds of forces acting on the bottom particle: the \emph{hardcore interactions} due to collisions and the \emph{soft potential} corresponding to the drift. For $k>1$, one obtains a similar interpretation as follows. 
 Consider the subsystem consisting of the $k$ lowest particles viewed as a rank based diffusion $(Y^k_0,\dots, Y^k_{k-1})$, where $Y^k_i$ gets upward drift $g_j$ if its rank in the subsystem is $j$, and it is reflected downwards when it collides with the minimum of the particles outside the subsystem. It can be deduced that each particle $Y^k_i$ accrues roughly the same proportion of local time due to downward reflection as time grows. Moreover, it asymptotically spends an equal proportion of time at each rank $j \in \{0,\dots,k-1\}$.
 Hence, $\frac{\EE^{\gbd}_{\pi}(L^*_k(1))}{2k}$ and $\frac{1}{k}(g_0 + \dots + g_{k-1})$ respectively quantify the effect of reflection and drift on each particle among the lowest $k$ particles, and $a/2$ captures the difference between these effects.
 The positivity of $a$ implies that the hardcore interactions dominate in the long term. Indeed, the results of \cite{tsai_stat} show that when $\gbd = \gbd^1$, under $\pi^{\gbd^1}_a$, for any $k \in \NN$, $Y_{(k)}(t)/t \rightarrow -a/2$ almost surely as $t \rightarrow \infty$.
 We conjecture that the same result is true for any $\gbd \in \cld_1$.

 \end{remark}

\begin{remark}
When $\gbd \notin \cld$, uniqueness in law for the infinite-dimensional gap process is currently an open problem, and therefore  what one means by a stationary distribution is not clear. However, under conditions, for $\gbd \notin \cld$, one can still construct stationary `approximative' versions of this gap process by taking `limits' of finite-dimensional processes \cite[Definition 7]{AS} (see also Definition \ref{def:sttappver} below). See \cite[Theorem 4.4, Lemma 4.5 and Section 4.2]{AS} and \cite[Remark 3]{sarantsev2017stationary} for some situations where such versions can be constructed. Theorem \ref{thm_main2} can be extended to such settings as follows, as is clear from an inspection of the proof. Suppose $\gbd$ satisfies $\inf_{n \ge 1}\bar g_n > -\infty$ and there exist $N_1 < N_2 < \dots \rightarrow \infty$ such that $\bar g_k > \bar g_{N_j}$ for all $k=1,\dots,N_j-1, \ j \ge 1$. If there is a stationary approximative version of the infinite-dimensional gap process with initial (invariant) distribution $\pi$ supported on $\clv$, and if $\pi$ is a product measure that satisfies the integrability property in \eqref{eq:satinteg},  then $\pi = \pi_a^{\gbd}$ for some $a \ge  -2\lim_{j \rightarrow \infty} \bar g_{N_j}$.
\end{remark}

\begin{remark}
Stationary distributions for finite dimensional reflected Brownian motions (with drift) in the positive orthant,
have been studied in \cite{harrison1987multidimensional, harrison1987brownian}. In particular, the paper \cite{ harrison1987brownian}  shows that the unique stationary distribution can be characterized through an identity, holding true for all suitable smooth test functions, referred to as the Basic Adjoint Relationship (BAR) (see \cite[Section 8]{harrison1987brownian}). Using this characterization it is shown in \cite[Section 9]{ harrison1987brownian}  that if the stationary distribution is product form then it must necessarily be a product of Exponential distributions.
The proof relies on using a suitable class of exponential test functions in the BAR to characterize the moment generating function of the stationary distribution. In the infinite dimensional setting considered here, although similar test functions are useful, we do not know of a similar BAR characterization for all stationary distributions. To circumvent this, we show in Lemma \ref{lem:lociden} that for any product form stationary distribution, one can obtain  `local' descriptions for the expectations of certain path functionals of the process. This result is key and essentially plays the role of BAR in our context in obtaining a recursive system of equations for the moment generating functions of the marginal distributions that can then be solved explicitly to prove Theorem \ref{thm_main2}.
\end{remark}

\begin{remark}
A referee has proposed the following interesting direction of investigation. Suppose that $\gamma \ll \pi_b^{\gbd}$ for some $b \ge -2 \inf_{n \in \NN} \bar g_n$. Then one may conjecture that, for each $k \in \NN$, 
$$\frac{\EE^{\gbd}_{\gamma}(L^*_k(t))}{kt} - \frac{2}{k}(g_0 + \dots + g_{k-1}) \to b\mbox{ as } t \to \infty.$$
One may also ask the following `domain of attraction' question. Given $b \ge -2 \inf_{n \in \NN} \bar g_n$, identify the set $\clv_b \subset \clv$ such that for $z \in \clv_b$, for each $k \in \NN$,
$$\frac{\EE^{\gbd}_{z}(L^*_k(t))}{kt} - \frac{2}{k}(g_0 + \dots + g_{k-1}) \to b \mbox{ as } t \to \infty.$$
In this case, we conjecture that any $\gamma \in \clp(\RR_+^{\infty})$ supported on $\clv_b$ is in the strong domain of attraction of $\pi_b^{\gbd}$. We leave the study of these questions for future work.
\end{remark}

{\SB Rest of the paper is devoted to the proofs of Theorem \ref{thm_main} and Theorem \ref{thm_main2}. Proof of Lemma \ref{lemerg} is given in the Appendix for completeness.}

\section{Proof of Theorem \ref{thm_main}}
\label{sec:prffirstthm}
We will only prove the first statement in Theorem \ref{thm_main}. The proof of the second is similar and is therefore omitted.

We begin with the following definition. Let
$\YY \doteq \clc([0, \infty): \RR_+^{\infty} \times \RR_+^{\infty})$.
\begin{definition}
	Let $\gbd\in \cld$ and $\gamma, \gamma' \in \clp(\RR_+^{\infty})$ be such that $\gamma(\clv)= \gamma'(\clv)=1$. 
	We say that a probability measure $\PP^{\gbd}_{\gamma, \gamma'}$  on $(\YY, \clb(\YY))$ defines
	 a coupling of the gap process of the $\gbd$-Atlas model with initial distributions $\gamma, \gamma'$, if, denoting the coordinate processes on $\YY$ as $\Zbd^{(1)}$ and $\Zbd^{(2)}$, namely
	 $$\Zbd^{(1)}(\om)(t) \doteq \om^{(1)}(t), \;\; \Zbd^{(2)}(\om)(t) \doteq \om^{(2)}(t), \; \om = (\om^{(1)}, \om^{(2)})\in \YY, \; t \ge 0,$$
	 we have 
	 $$\PP^{\gbd}_{\gamma, \gamma'} \circ (\Zbd^{(1)})^{-1} = \PP^{\gbd}_{\gamma}, \;\; \PP^{\gbd}_{\gamma, \gamma'} \circ (\Zbd^{(2)})^{-1} = \PP^{\gbd}_{\gamma'}.$$
Define the coupling time
$$
\tau_{c} \doteq \inf\{t \ge 0: \Zbd^{(1)}(s) = \Zbd^{(2)}(s) \text{ for all } s \ge t\},
$$
where $\tau_c \doteq \infty$ if the above set is empty.
When $\gamma = \delta_{\zbd}$ and $\gamma' = \delta_{\zbd'}$ for some $\zbd, \zbd' \in \clv$, we write
	  $\PP^{\gbd}_{\gamma, \gamma'} = \PP^{\gbd}_{\zbd, \zbd'}$.
\end{definition}
Since $\gbd \in \cld$ will be fixed throughout the section, we will frequently suppress it from the notation.

Consider now  $\gamma = \pi_a^{\gbd}$, where $a$ is as in the statement of the theorem,  and a bounded measurable map $\psi_0: \RR_+^{\infty} \to \RR$ such that
\begin{equation} \label{eq:1228} T_t \psi_0 = \psi_0,  \; \gamma \mbox{ a.s. for every } t \ge 0,\end{equation}
{\SB where $T_t$ is defined as in \eqref{eq:semgpdep}, namely,  $T_t\psi_0(\zbd)$ is defined for $\gamma$ a.e. $z$ as $T_t\psi_0(\zbd)= \EE_{\zbd}(\psi_0(\Zbd(t)))$,  $t \ge 0$.}
In order to prove Theorem \ref{thm_main} it suffices, in view of Lemma \ref{lemerg}, to show that
 $\psi_0$ is $\gamma$-a.e. constant.
This, in view of \eqref{eq:1228}, is equivalent to showing that for some fixed $t_0>0$, $\psi \doteq T_{t_0}\psi_0$ is $\gamma$-a.e. constant. For the rest of the section we will fix a $t_0>0$ and consider $\psi$ defined as above.  Note that
\eqref{eq:1228} holds with $\psi_0$ replaced by $\psi$.
{\SB\subsection{Proof overview }\label{sec:pfov1}
Before we proceed to the details, we  give a brief overview of the proof strategy for showing that $\psi$ is $\gamma$-a.e. constant. The first step is to show using the $T_t$-invariance of $\psi$ that for any $t \ge 0$, $\psi(\Zbd(t)) = \psi(\zbd)$ for $\gamma$-a.e. $\zbd$. Moreover, using the product form of $\gamma$, the same conclusion is seen to hold for the process $\Zbd(\cdot)$ started from a `perturbed' initial point obtained by changing any two co-ordinates of $\gamma$-a.e. $\zbd$ by given numbers (see \eqref{eq:areequal}). Up to this point, we only use quite general arguments not involving the specific dynamics of the $\gbd$-Atlas model. However, the dynamics comes into play crucially in the subsequent steps, which involve construction of a coupling of two $\gbd$-Atlas models started from initial points that differ at a finite number of co-ordinates. This  is achieved by a combination of  `mirror'  and synchronous couplings of the infinite collection of driving Brownian motions (see \eqref{mirror} and \eqref{eq:coup}). The coupling is utilized in two ways. First, it is shown that for any $s>0$, the coupled $\gbd$-Atlas models coalesce with positive probability by time $s$ (Proposition \ref{prop:prop1}). Using this and \eqref{eq:areequal}, it follows that the value of $\psi$ remains unchanged upon changing any pair of coordinates by rational numbers (see \eqref{eq:gamone}). To extend this to perturbation by real numbers (see \eqref{630a} and \eqref{630b}), we need a key `directional strong Feller property' described in Proposition \ref{prop:prop2}, which is once again established using the coupling. The equality of $\psi$ under pairwise perturbations is then extended to perturbation by any finite permutation via straightforward algebraic manipulations. The proof of $\gamma$ almost sure constancy of $\psi$, and hence of Theorem \ref{thm_main}, is finally achieved by an application of the Hewitt-Savage zero-one law.}
\subsection{Preliminary Results}
Now, we proceed to the details. We begin by noting that, from \eqref{eq:1228} (with $\psi_0$ replaced by $\psi$),
\begin{align*}
	0 &= \EE_{\gamma}\left(\psi(\Zbd(t))^2\right) - \EE_{\gamma}\left(\psi(\Zbd(0))^2\right) =
	\EE_{\gamma}\left(\psi(\Zbd(t))^2\right) + \EE_{\gamma}\left(\psi(\Zbd(0))^2\right) - 2 
	\EE_{\gamma}\left(\psi(\Zbd(0))\psi(\Zbd(t))\right)\\
	& =\EE_{\gamma}\left(\psi(\Zbd(t)) - \psi(\Zbd(0))\right)^2.
\end{align*}
This says 
\begin{equation}\label{eq:346}
	\psi(\Zbd(t)) = \psi(\Zbd(0)), \; \PP_\gamma \mbox{ a.s., for every  } t \ge 0.
\end{equation}
Next let
{\SB\begin{equation}
	H(\zbd,t) \doteq \EE_{\zbd}|\psi(\Zbd(t)) - \psi(\Zbd(0))|, \; \zbd \in \clv, \, t \ge 0.
\end{equation}}
For $i \in \NN$,  $x,y \in \RR_+$, and $\bar \gamma \in \clp(\clv)$ of the form $\bar \gamma = \otimes_{i=1}^{\infty} \bar\gamma_i$ for some $\bar\gamma_i \in \clp(\RR_+)$, $i \in \NN$, define
\begin{equation}\label{eq:etadef}
	\eta_i^{\bar\gamma}(x,y,t) \doteq \int_{\RR_+^{\infty}} H(z_1, \ldots, z_{i-1}, x,y, z_{i+2}, \ldots,t) \prod_{j \in \NN \setminus \{i,i+1\}}\bar\gamma_j(dz_j).
\end{equation}
We have from the Markov property that 
\begin{equation} \label{eq:gambar}
	\EE_{\bar\gamma}\left| \psi(\Zbd(t)) - \psi(\Zbd(0))\right|	= \int_{\RR_+^2 }\eta_i^{\bar\gamma}(x,y,t) \bar \gamma_{i} (dx)\bar \gamma_{i+1} (dy).
\end{equation}

Now take $\bar \gamma=\gamma = \pi_a^{\gbd} = \otimes_{i=1}^{\infty} \gamma_i$.
Then, from \eqref{eq:346},
\begin{align}\label{eq:firstisze}
0&= 	\EE_{\gamma}\left|\psi(\Zbd(t)) - \psi(\Zbd(0))\right | = \int_{\RR_+^2} \eta_i^{\gamma}(x,y,t) \gamma_i(dx)\gamma_{i+1}(dy).
\end{align}
Recall that, for each $i \in \NN$, $\gamma_i$ is an Exponential distribution and thus is mutually absolutely continuous with respect to the Lebesgue measure $\lambda$ on $\RR_+$.
Since $\eta_i^{\gamma}$ is nonnegative, we have from this mutual absolute continuity property that, {\SB for any $i \in \NN$,}
\begin{equation}\label{eqizzo}
	\eta_i^{\gamma}(x,y,t) = 0,\; \lambda \otimes \lambda \mbox{ a.e. } (x,y) \in \RR_+^2, \, \mbox{ for every  } t \ge 0.
\end{equation}
Fix $\delta_1,\delta_2>0$, {\SB $i \in \NN$, $t \ge 0$}. For $y \ge 0$, let $\varsigma_{\delta_2}(y) \doteq \delta_2 1_{y>\delta_2}$. Letting $\Zbd$ be a $\RR_+^{\infty}$-valued random variable distributed as $\gamma$, denote by
$\tilde \gamma \in \clp(\RR_+^{\infty})$ the probability distribution of
$\tilde \Zbd \doteq \Zbd + \delta_1 \ebd_i - \varsigma_{\delta_2}(Z_{i+1}) \ebd_{i+1}$ where
$\ebd_i$ is the unit vector in $\RR_+^{\infty}$ with $1$ at the $i$-th coordinate.
Note that by definition in \eqref{eq:etadef}, $\eta_i^{\gamma} = \eta_i^{\tilde\gamma}$, and in particular, from \eqref{eqizzo},
\begin{equation}
	\eta_i^{\tilde\gamma}(x+\delta_1, y-\varsigma_{\delta_2}(y),t) = \eta_i^{\gamma}(x+\delta_1, y-\varsigma_{\delta_2}(y),t)=0,\; \lambda \otimes \lambda \mbox{ a.e. } (x,y) \in \RR_+^2.
\end{equation}
Thus, in view of \eqref{eq:gambar},
\begin{equation}\label{eq:secisze}
	\EE_{\tilde\gamma}\left|\psi(\Zbd(t)) - \psi(\Zbd(0))\right | = \int_{\RR_+^2 }\eta_i^{\tilde\gamma}(x,y,t) \tilde \gamma_{i}(dx) \tilde \gamma_{i+1}(dy)=
	\int_{\RR_+^2} \eta_i^{\tilde\gamma}(x+\delta_1, y-\varsigma_{\delta_2}(y),t)\gamma_i(dx)\gamma_{i+1}(dy)=0,
\end{equation}
where we have used the fact that $\gamma_i\otimes \gamma_{i+1}$ is mutually absolutely continuous with respect to $\lambda \otimes \lambda$.
For $\zbd \in \clv$, let $\beta_{\delta_1,\delta_2}(\zbd) \doteq \zbd + \delta_1 \ebd_i - \varsigma_{\delta_2}(z_{i+1}) \ebd_{i+1}$.
Then, combining \eqref{eq:firstisze} and \eqref{eq:secisze}, we have
\begin{equation}
\EE_{\zbd}\left|\psi(\Zbd(t)) - \psi(\Zbd(0))\right | = \EE_{\beta_{\delta_1,\delta_2}(\zbd)}\left|\psi(\Zbd(t)) - \psi(\Zbd(0))\right | = 0, \; \gamma \mbox{ a.e. } \zbd, \mbox{ for every } t \ge 0.
\end{equation}
Since $\beta_{\delta_1,\delta_2}(\zbd) = \zbd + \delta_1 \ebd_i - \delta_2\ebd_{i+1} \doteq \zbd^{\delta_1,\delta_2,i}$ when $z_{i+1}> \delta_2$, we get\
\begin{equation}\label{eq:areequal}
\psi(\Zbd(t)) = \psi(\zbd),\,  \PP_{\zbd} \mbox{ a.e., }
\psi(\Zbd(t)) = \psi(\zbd^{\delta_1,\delta_2,i}),\,  \PP_{\zbd^{\delta_1,\delta_2,i}} \mbox{ a.e., for } \; \gamma \mbox{ a.e. } \zbd \mbox{ with } z_{i+1}>\delta_2, \mbox{ for every } t \ge 0.
\end{equation}

We will need the following proposition. The proof is given in Section \ref{pfprop1}.
\begin{proposition}
	\label{prop:prop1}
	Fix $i\in \NN$, $\zbd \in \clv$ with $\zbd>0$, and $\delta_1>0, \delta_2 \in (0, z_{i+1})$. Then there exists a coupling $\PP_{\zbd, \delta_1,\delta_2,i}$
	of    the gap process of the $\gbd$-Atlas model with initial distributions $\delta_{\zbd}$ and
	$\delta_{\zbd^{\delta_1,\delta_2,i}}$ such that, for any $s>0$,
	$$\PP_{\zbd, \delta_1,\delta_2,i}(\Zbd^{(1)}(s) = \Zbd^{(2)}(s)) >0.$$
\end{proposition}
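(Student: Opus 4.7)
The plan is to construct an explicit coupling via mirror coupling of the rank-based driving Brownian motions $B^*_0, \ldots, B^*_i$ of the two systems and synchronous coupling of $B^*_k$ for $k \ge i+1$, and then to exhibit a positive-probability event on which the two gap sequences coincide at time $s$. By translation invariance of the gap process, I am free to shift the initial ranked configuration of system $2$ so that it agrees with that of system $1$ for ranks $\ge i+1$: I set $y_k = \sum_{l=1}^k z_l$ (with $y_0 = 0$) and take $y'_k = y_k - (\delta_1 - \delta_2)$ for $k \le i-1$, $y'_i = y_i + \delta_2$, and $y'_k = y_k$ for $k \ge i+1$, which gives $(y'_k - y'_{k-1})_{k \ge 1} = \zbd^{\delta_1, \delta_2, i}$. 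On a common probability space, let $\{B^*_k\}_{k \ge 0}$ be independent standard BMs driving system $1$'s ranked SDE \eqref{order_SDE}. For $k = 0, \ldots, i$, set $a_k = (y'_k - y_k)/2$ and $\tau_k = \inf\{t : B^*_k(t) = a_k\}$, and define $B^{*,2}_k(t) = -B^*_k(t)$ for $t < \tau_k$ and $B^{*,2}_k(t) = B^*_k(t) - 2 B^*_k(\tau_k)$ for $t \ge \tau_k$; for $k \ge i+1$ set $B^{*,2}_k = B^*_k$. Strong Markov and the reflection symmetry of BM make each $B^{*,2}_k$ a standard BM, and since each depends only on $B^*_k$ the family is mutually independent, so driving \eqref{order_SDE} with $(y'_k)$ and $\{B^{*,2}_k\}$ produces system $2$ with the correct marginal gap law by Theorem \ref{sarthm}.

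Set $\Delta_k(t) = Y^{(2)}_{(k)}(t) - Y^{(1)}_{(k)}(t)$ and let $G$ be the event that $\tau_k \le s$ for all $k = 0, \ldots, i$ and that none of the gaps $Z^{(j)}_1, \ldots, Z^{(j)}_{i+1}$, $j = 1, 2$, hits zero on $[0, s]$. On $G$, all local times $L^*_k, L^{*,2}_k$ with $k \le i+1$ vanish on $[0,s]$, so for each $k \le i$ the ranked positions evolve as plain drifted Brownian motions and the mirror construction forces $\Delta_k$ to vanish at $\tau_k$ and stay at $0$, giving $\Delta_k(s) = 0$. For $k \ge i+1$, the two processes satisfy the same SDE on $[0,s]$ (driven by identical BMs, with identical initial data $(y_k)_{k \ge i+1}$ and vanishing boundary local time at rank $i+1$), which corresponds to an infinite $\tilde\gbd$-Atlas model with $\tilde g_l = g_{i+1+l}$; by pathwise uniqueness for this sub-system (a minor adaptation of the well-posedness arguments of \cite{AS} behind Theorem \ref{sarthm}), $\Delta_k(s) = 0$ for $k \ge i+1$ as well. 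Hence $\Zbd^{(1)}(s) = \Zbd^{(2)}(s)$ on $G$.

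Finally, to show $\PP_{\zbd, \delta_1, \delta_2, i}(G) > 0$, I apply Girsanov on $[0, s]$ (valid since $\sum g_k^2 < \infty$) to pass to the driftless case, which preserves positivity by mutual absolute continuity. Conditioning on the paths of $(B^*_k)_{k \ge i+2}$, the event that rank $i+2$ does not collide with rank $i+1$ in either system on $[0, s]$ has positive conditional probability ($z_{i+2} > 0$), and on it the first $i+2$ particles decouple entirely from the higher ranks. The remaining event depends only on $(B^*_k)_{k=0}^{i+1}$ and imposes finitely many open-in-uniform-topology continuous constraints: hitting the targets $a_k$ within $[0, s]$ and keeping $i+1$ continuous gap functionals strictly positive on $[0, s]$. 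Non-emptiness is verified by an explicit path construction (for instance, piecewise-linear paths that reach $a_k$ on a small subinterval and stay there, with amplitudes small enough that all relevant gaps remain positive), and the support theorem for BM in $\clc([0, s]; \RR^{i+2})$ then yields positive Wiener measure.

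The main obstacle is this positivity step: the joint event $G$ combines mirror hitting times for several independent BMs with non-collision conditions that are coupled through infinitely many higher-ranked particles via local-time pushbacks. The decoupling-by-conditioning trick (isolating the bottom $i+2$ particles on a positive-probability no-boundary-collision event) is what reduces the analysis to a finite-dimensional Brownian event amenable to support-theorem arguments. A secondary subtlety is the pathwise uniqueness claim for the truncated sub-system of ranks $\ge i+1$, needed so that synchronous coupling there actually produces identical trajectories on $G$.
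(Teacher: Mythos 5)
Your coupling construction is sound, and it departs from the paper's in an interesting way: you reflect each of $B^*_0,\dots,B^*_i$ independently about its own level $a_k$, whereas the paper applies a single mirror reflection of the \emph{vector} $(B_0,\dots,B_i)'$ about the hyperplane orthogonal to $\vbd = D^{-1}(\tilde\Psib(0)-\Psib(0))$, so that all $i+1$ coordinates couple at the same stopping time $\sigma$. Both are legitimate ways to build a coupling of the gap processes, and your argument that $\Delta_k(s)=0$ for $k\le i$ on $G$ (via $\tau_k\le s$ and vanishing local times) and that the ranks $\ge i+1$ coincide via synchronous coupling and pathwise agreement is essentially the same mechanism the paper uses. (One advantage of the paper's single-hyperplane reflection becomes visible only later, in Lemma \ref{lemcty}, where the coupling time is controlled via $\|\vbd\|\sim\sqrt{\delta_1^2+\delta_2^2}\to 0$; it is not relevant for the present proposition.)

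The gap is in the positivity step, and it is not merely a matter of polish. You write that, after conditioning on $(B^*_k)_{k\ge i+2}$, the event that rank $i+2$ does not collide with rank $i+1$ ``has positive conditional probability ($z_{i+2}>0$), and on it the first $i+2$ particles decouple entirely from the higher ranks,'' and that ``the remaining event depends only on $(B^*_k)_{k=0}^{i+1}$.'' This reasoning is circular: the non-collision event $\{Z_{i+2}(t)>0,\ \forall\, t\le s\}$ is a functional of \emph{all} of $(B^*_k)_{k\ge 0}$ through the infinite reflected SDE \eqref{order_SDE} — it is not an event in $\sigma(B^*_k : k\ge i+2)$, nor does it become an event in $\sigma(B^*_k : k\le i+1)$ after conditioning. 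The ``decoupling'' you invoke is a consequence of non-collision, so you cannot use it to reduce the non-collision event to a finite-dimensional one; you have assumed what you set out to establish. The assertion that the conditional probability is positive for a.e.\ realization of the higher-indexed driving BMs, because $z_{i+2}>0$, is not justified and is in fact the delicate point of the whole proposition. A similar circularity affects the ``no gap $Z_{i+1}$ hits zero'' part of $G$, since $Z_{i+1}=X_{i+1}-X_i$ and $X_{i+1}$ also depends on the higher-indexed BMs through the local-time chain.

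The way to close the gap — and the way the paper does it — is to \emph{first} introduce a deterministic threshold level $\ell=\tfrac{y_i+y_{i+1}+\delta_2}{2}$ strictly between $y_i$ and $y_{i+1}$, and then define two events $\tilde\cle^1(s)$ and $\cle^2(s)$ that live on disjoint $\sigma$-algebras: $\tilde\cle^1(s)\in\sigma(B_j:j\ge i+1)$ asserts that the driven trajectories $y_j + B_j(t)-tg^l$ stay above $\ell$, which (together with the decoupling it then self-consistently implies) keeps all ranked particles $X_j$, $j\ge i+1$, above $\ell$; and $\cle^2(s)\in\sigma(B_0,\dots,B_i)$ asserts that the mirror coupling of the $(i+1)$-dimensional process $\Psib$ succeeds while it stays in the interior of the orthant with the auxiliary coordinate $\ell - X_i$ positive (so the lowest $i+1$ particles stay below $\ell$ and their gaps stay positive). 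The containment $\tilde\cle^1(s)\cap\cle^2(s)\subseteq\{\tau_c\le s\}$ is verified \emph{after} the two events have been defined, so there is no circularity, and $\PP(\tilde\cle^1(s)\cap\cle^2(s)) = \PP(\tilde\cle^1(s))\,\PP(\cle^2(s))>0$ follows from independence and separate estimates (a Gaussian-tail summation over $j\ge i+1$ using $\ybd\in\clu$ for the first event, and standard positivity for Brownian events on the second). You should replace your conditioning argument with events of this form — the rest of your proof (coupling construction, coincidence of gaps at time $s$ on the good event) can stay. The Girsanov step is also dispensable: it does not simplify the non-collision analysis (the local-time interactions remain under the change of measure), and it introduces extra complications because the mirror-coupled BMs $B^{*,2}_k$ are not preserved under the drift change — direct estimates as in the paper are cleaner.
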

Now, for $\delta_1,\delta_2>0$ and $s>0$,
\begin{align*}
	&\int_{\{\zbd: z_{i+1}>\delta_2\}} |\psi(\zbd^{\delta_1,\delta_2,i}) - \psi(\zbd)| \PP _{\zbd, \delta_1,\delta_2,i}(\Zbd^{(1)}(s) = \Zbd^{(2)}(s)) \gamma(d\zbd)\\
	&= \int_{\{\zbd: z_{i+1}>\delta_2\}} \EE _{\zbd, \delta_1,\delta_2,i}\left[|\psi(\zbd^{\delta_1,\delta_2,i}) - \psi(\zbd)| 1_{\{\Zbd^{(1)}(s) = \Zbd^{(2)}(s)\}}\right] \gamma(d\zbd)\\
	&=\int_{\{\zbd: z_{i+1}>\delta_2\}} \EE _{\zbd, \delta_1,\delta_2,i}\left[|\psi(\Zbd^{(2)}(s)) - \psi(\Zbd^{(1)}(s))| 1_{\{\Zbd^{(1)}(s) = \Zbd^{(2)}(s)\}}\right] \gamma(d\zbd) =0,
\end{align*}
where $\Zbd^{(i)}$ are as given by Proposition \ref{prop:prop1} and
the second equality follows from \eqref{eq:areequal}.
Hence, from Proposition \ref{prop:prop1}, for every $\delta_1,\delta_2>0$ and $i \in \NN$,
$$\psi(\zbd +  \delta_1 \ebd_i - \delta_2\ebd_{i+1}) = \psi(\zbd) \mbox{ for } \gamma \mbox{ a.e. } \zbd \mbox{ with } z_{i+1} >\delta_2.$$
Thus we have shown that 
\begin{equation}\label{eq:241}
	\gamma(\cup_{\delta_1,\delta_2 \in \QQ \cap (0,\infty)}\{\zbd: z_{i+1} >\delta_2, \mbox{ and } \psi(\zbd +  \delta_1 \ebd_i - \delta_2\ebd_{i+1}) \neq \psi(\zbd)\}) =0.
\end{equation}
This implies that 
\begin{equation}\label{eq:gamone}
\gamma(\zbd: \psi(\zbd +  \delta_1 \ebd_i - \delta_2\ebd_{i+1}) = \psi(\zbd) \mbox{ for all } \delta_1 \in (0,\infty) \cap \QQ, \ \delta_2 \in (0, z_{i+1}) \cap \QQ)=1.
\end{equation}
To see this, let $B$ denote the event on the left side of \eqref{eq:gamone}. Then if $\zbd \in B^c$ and $z_{i+1}>0$, then for some $\delta_1, \delta_2 \in (0,\infty) \cap \QQ$, $z_{i+1}>\delta_2$ and $\psi(\zbd +  \delta_1 \ebd_i - \delta_2\ebd_{i+1}) \neq \psi(\zbd) $, which shows that $\zbd$ is in the event on the left side of \eqref{eq:241} which in view of \eqref{eq:241} says that $\gamma(B^c)=0$, proving the statement in \eqref{eq:gamone}.
The following proposition enables us to extend \eqref{eq:gamone} to \emph{all} $(\delta_1,\delta_2) \in (0,\infty) \times (0,z_{i+1})$.
The proof is given in Section \ref{pfprop1}. 
\begin{proposition}
	\label{prop:prop2}
	For each $\zbd \in \clv$ with $\zbd>0$ and $i \in \NN$,  the map $(\delta_1,\delta_2) \mapsto \psi(\zbd + \delta_1 \ebd_i - \delta_2\ebd_{i+1})$ is right continuous on $[0,\infty) \times [0, z_{i+1})$. That is, if $(\delta_1, \delta_2) \in [0,\infty) \times [0, z_{i+1})$ and $\delta_{1,n} \downarrow \delta_1$ and $\delta_{2,n} \in [0, z_{i+1})$ with $\delta_{2,n} \downarrow \delta_2$ as $n \rightarrow \infty$, then $\psi(\zbd + \delta_{1,n} \ebd_i - \delta_{2,n}\ebd_{i+1}) \rightarrow \psi(\zbd + \delta_1 \ebd_i - \delta_2\ebd_{i+1})$ as $n \rightarrow \infty$.
\end{proposition}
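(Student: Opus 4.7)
\noindent
\textbf{Proof strategy for Proposition \ref{prop:prop2}.}
The right-continuity asserted here is a quantitative strengthening of Proposition \ref{prop:prop1}: rather than just a fixed positive coupling probability for one fixed perturbation, the coupling must succeed with probability approaching one as the perturbation shrinks. The plan is to apply Proposition \ref{prop:prop1} in a ``moving reference frame'' and show that the ingredients of its construction yield the stronger statement.

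Fix $(\delta_1,\delta_2)\in [0,\infty)\times [0,z_{i+1})$ and sequences $\delta_{1,n}\downarrow\delta_1$, $\delta_{2,n}\downarrow\delta_2$ with $\delta_{2,n}\in [0,z_{i+1})$. Set
\[
\wbd_n \doteq \zbd + \delta_{1,n}\ebd_i - \delta_{2,n}\ebd_{i+1}, \qquad \wbd_\infty \doteq \zbd + \delta_1\ebd_i - \delta_2\ebd_{i+1},
\]
so that $\wbd_n = \wbd_\infty + \varepsilon_{1,n}\ebd_i - \varepsilon_{2,n}\ebd_{i+1}$ with $\varepsilon_{j,n} \doteq \delta_{j,n}-\delta_j \downarrow 0$. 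Because $\zbd>0$, $\delta_1\ge 0$ and $\delta_2< z_{i+1}$, the base point $\wbd_\infty$ lies in $\clv$ with strictly positive coordinates and its $(i+1)$-th coordinate equals $z_{i+1}-\delta_2>0$; hence for all $n$ large enough $\varepsilon_{2,n}<z_{i+1}-\delta_2$, and Proposition \ref{prop:prop1} applied at base point $\wbd_\infty$ with perturbations $(\varepsilon_{1,n},\varepsilon_{2,n})$ produces a coupling $\PP_n$ on $(\YY,\clb(\YY))$ of the gap processes started from $\wbd_n$ and $\wbd_\infty$. (The sub-sequential cases in which some $\varepsilon_{j,n}$ vanishes are handled by the obvious one-coordinate analogue of the same coupling, and are omitted here.)

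\noindent
\textbf{Key step.} The central claim is that
\[
\lim_{n\to\infty}\PP_n\bigl(\Zbd^{(1)}(t_0)=\Zbd^{(2)}(t_0)\bigr) = 1.
\]
In the Proposition \ref{prop:prop1} construction, the first $i+1$ driving Brownian motions are mirror-coupled and the remaining ones are synchronously coupled; the event of successful coupling by time $t_0$ contains the intersection of two events: (a) the mirror-coupled difference Brownian motion driving the first $i+1$ ranked particles has reached zero before some stopping time $\tau_n\le t_0$; and (b) no ``bad'' event occurs by $\tau_n$, where the bad events consist of any of the first $i$ gap processes hitting zero or any of the first $i+1$ ranked particles interacting (in the sense used in Proposition \ref{prop:prop1}) with a higher-ranked particle. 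Since the initial discrepancy in the positions of the first $i+1$ ranked particles is of order $\varepsilon_{1,n}+\varepsilon_{2,n}\to 0$, the mirror-coupling time is stochastically dominated by the zero-hitting time of a scaled Brownian motion starting at a point of order $\varepsilon_{1,n}+\varepsilon_{2,n}$, hence tends to zero in probability. Because $\wbd_\infty$ has strictly positive coordinates and Brownian paths are continuous, the probabilities of the bad events occurring inside a vanishing time window starting from $\wbd_\infty$ also tend to zero; the required uniform estimates are precisely those already appearing in the proof of Proposition \ref{prop:prop1}. Combining these two ingredients yields the displayed limit.

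\noindent
\textbf{Conclusion and main obstacle.} Since $\psi = T_{t_0}\psi_0$ with $\psi_0$ bounded and measurable, the coupling at time $t_0$ gives
\[
|\psi(\wbd_n)-\psi(\wbd_\infty)| = \bigl|\EE_n[\psi_0(\Zbd^{(1)}(t_0)) - \psi_0(\Zbd^{(2)}(t_0))]\bigr| \le 2\|\psi_0\|_\infty\,\PP_n\bigl(\Zbd^{(1)}(t_0)\ne\Zbd^{(2)}(t_0)\bigr) \longrightarrow 0,
\]
which is the claimed right-continuity. The main obstacle is the quantitative estimate in the key step: the positivity statement in Proposition \ref{prop:prop1} must be upgraded to show that the success probability tends to one as the initial perturbation shrinks. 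This requires revisiting the proof of Proposition \ref{prop:prop1} to track how the mirror-coupling time scales with $\varepsilon_{1,n}+\varepsilon_{2,n}$ and to extract uniform-in-$n$ smallness of the bad-event probabilities over a vanishing time window. I expect both to follow from explicit Brownian hitting-time estimates together with the uniform non-degeneracy of $\wbd_\infty$, plus the standard no-triple-collision estimates available for the infinite $\gbd$-Atlas model.
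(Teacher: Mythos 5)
Your proposal is correct and follows essentially the same route as the paper: the paper proves Proposition \ref{prop:prop2} by reducing (via the same recentering observation you make explicit) to right-continuity at the origin and then invoking Lemma \ref{lemcty}, which is exactly the quantitative upgrade you identify as the main obstacle — it shows $\PP_{\zbd,\delta_1,\delta_2,i}(\tau_c>t_1)\le\eta$ uniformly for small $\delta_1,\delta_2$ by combining (i) the scaling $\|\vbd\|\lesssim\sqrt{\delta_1^2+\delta_2^2}\to0$ for the mirror-coupling hitting time and (ii) Gaussian tail bounds over a short time window controlling the bad events, the two ingredients your sketch names. The final bound $|\psi(\wbd_n)-\psi(\wbd_\infty)|\le 2\|\psi_0\|_\infty\,\PP_n(\tau_c>t_1)$ is identical.
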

We remark that our proof shows that in the above proposition $\psi$ can be taken to be $T_{t_0}\psi_0$ for any real bounded measurable  function $\psi_0$ on $\RR_+^{\infty}$ and $t_0>0$, namely it need not be $\{T_t\}$-invariant. Thus the property established in the above proposition can be viewed as a certain type of `directional strong Feller property'.

\subsection{Completing the proof of Theorem \ref{thm_main}}
As an immediate consequence of the above proposition and \eqref{eq:gamone} we have that
for  $\gamma$  a.e. 	$\zbd$,
\begin{equation}\label{630a}
 \psi(\zbd + \delta_1 \ebd_i - \delta_2\ebd_{i+1}) = \psi(\zbd) \mbox{ for all } \delta_1 \in (0,\infty), \delta_2 \in (0, z_{i+1}), \; i \in \NN.
\end{equation}
A similar argument shows that, for  $\gamma$  a.e. 	$\zbd$,
\begin{equation}\label{630b}
\psi(\zbd - \delta_1 \ebd_i + \delta_2\ebd_{i+1}) = \psi(\zbd) \mbox{ for all } \delta_1 \in (0, z_{i}), \delta_2 \in (0,\infty), \; i \in \NN.
\end{equation}
We now proceed to the proof of the first statement in Theorem \ref{thm_main}. Recall that $\gamma = \pi_a^{\gbd} \doteq \otimes_{n=1}^{\infty} \mbox{Exp}(n (2\bar g_n+a))$, where $a$ satisfies the condition in Theorem \ref{thm_main}. For notational simplicity, let $c_n \doteq 2[n (2\bar g_n+a)]^{-1}$, $n \in \NN$.
Let $\tilde\psi: \clv \to \RR$ be defined as
$$\tilde\psi(z_1, z_2, z_3, \ldots) \doteq \psi(c_1z_1, c_2z_2, c_3z_3, \ldots), \; \zbd = (z_1, z_2, \ldots) \in \clv$$
if $(c_1z_1, c_2z_2, c_3z_3, \ldots) \in \clv$. For all other $\zbd \in \clv$, set $\tilde \psi(\zbd)=0$.
We denote $\pi_0^{\gbd^1}\doteq \otimes_{n=1}^{\infty} \mbox{Exp}(2)$ as $\pi_0$ for simplicity. 
Observe that, for any $i \in \mathbb{N}_0$,
\begin{align}\label{eq:909}
&\pi_0(\zbd: \tilde \psi(z_1, z_2, \ldots , z_{i-1}, z_i, z_{i+1}, z_{i+2}, \ldots) = \tilde \psi(z_1, z_2, \ldots , z_{i-1}, z_{i+1}, z_{i}, z_{i+2}, \ldots))\notag\\
&\hspace{-1cm}=\pi_a^{\gbd}\left(\zbd:  \psi(z_1, z_2, \ldots , z_{i-1}, z_i, z_{i+1}, z_{i+2}, \ldots) = \psi\left(z_1, z_2, \ldots , z_{i-1}, \frac{c_i}{c_{i+1}}z_{i+1}, \frac{c_{i+1}}{c_{i}}z_{i}, z_{i+2}, \ldots\right)\right).
\end{align}
Consider the set $C \in \clb(\XX)$ with $\pi_a^{\gbd}(C)=1$ on which the  two statements in \eqref{630a} and \eqref{630b} hold. Then for any $\zbd \in C$ such that $z_i>0$ for all $i\in \NN$, we have,
\begin{equation}\psi(z_1, z_2, \ldots , z_{i-1}, z_i, z_{i+1}, z_{i+2}, \ldots) = \psi\left(z_1, z_2, \ldots , z_{i-1}, \frac{c_i}{c_{i+1}}z_{i+1}, \frac{c_{i+1}}{c_{i}}z_{i}, z_{i+2}, \ldots\right).\end{equation}
Indeed, if $\frac{c_i}{c_{i+1}}z_{i+1}-z_i>0$, then the statement follows from \eqref{630a}  on taking $\delta_1 = \frac{c_i}{c_{i+1}}z_{i+1}-z_i$ and $\delta_2 =  \frac{c_{i+1}}{c_{i}} \delta_1$, and if 
$z_{i}-\frac{c_i}{c_{i+1}}z_{i+1} >0$, the statement follows from \eqref{630b} on taking $\delta_1 = z_{i}-\frac{c_i}{c_{i+1}}z_{i+1}$ and $\delta_2 =  \frac{c_{i+1}}{c_{i}} \delta_1$.
Since $\pi_a^{\gbd}(C)=1$, we have that the probability on the right side of \eqref{eq:909} is $1$ and so,
\begin{equation}
\pi_0(\zbd: \tilde \psi(z_1, z_2, \ldots , z_{i-1}, z_i, z_{i+1}, z_{i+2}, \ldots) = \tilde \psi(z_1, z_2, \ldots , z_{i-1}, z_{i+1}, z_{i}, z_{i+2}, \ldots))=1.
\end{equation}
{\SB As any finite permutation can be obtained as a composition of finitely many adjacent transpositions}, it now follows that, in fact for any finite permutation $\rho: \NN \to \NN$,
\begin{equation}
\pi_0(\zbd: \tilde \psi(z_1, z_2, \ldots) = \tilde \psi(z_{\rho(1)}, z_{\rho(2)}, \ldots))=1.
\end{equation}

Now using the Hewitt-Savage zero-one law (cf. \cite[Theorem 2.15]{kall}), $\tilde \psi$ is $\pi_0$ a.e. constant, namely, there is a $\alpha \in \RR$ such that
$\pi_0(\zbd: \tilde \psi(\zbd)=\alpha)=1$.
This shows that
$$\pi_a^{\gbd}(\zbd:  \psi(\zbd)=\alpha) = \pi_0(\zbd: \tilde \psi(\zbd)=\alpha)=1.$$
{\SB Hence, $\psi$ is constant $\pi_a^{\gbd}$ a.s. Appealing to Lemma \ref{lemerg}, this completes the proof of the first statement in Theorem \ref{thm_main}.} The second statement follows similarly. \hfill \qed

\subsection{Proofs of Propositions \ref{prop:prop1} and \ref{prop:prop2}}
\label{pfprop1}
Recall that we fix $\gbd \in \cld$. Also, throughout this section we fix $\zbd \in \clv$ such that $\zbd >0$ and $i \in \NN$.
Define for $\delta_2 \in (0, z_{i+1})$ and $\delta_1>0$
{\SB$$\ybd \doteq (0, z_1, z_1+z_2, \ldots)', \;\; \ybd^{\delta_1,\delta_2} \doteq \ybd + \sum_{j=0}^{i-1}(\delta_2-\delta_1)\ebd_j  + \delta_2\ebd_i.$$}
Observe that, with the above choice of starting points of the particles, the corresponding gaps are $\zbd(\ybd) = \zbd$ and $\zbd(\ybd^{\delta_1,\delta_2}) = \zbd^{\delta_1,\delta_2}$.

Let $B_0, B_1, \ldots$ be a sequence of mutually independent standard Brownian motions on some probability space $(\Om, \clf, \PP)$. Consider the $(i+1)$-dimensional diffusion process
$$
\Psib(t) = \Psib(0) + D\Bbd^{(i)}(t) + \bbd t, \ t \ge 0,
$$
where $\Psib(0) = (z_1,\dots, z_i, (z_{i+1} + \delta_2)/2)'$, $\Bbd^{(i)}(\cdot) = (B_0(\cdot),B_1(\cdot),\dots,B_i(\cdot))'$, $\bbd = (g_1-g_0,\dots,g_{i} - g_{i-1},-g_i)'$, and $D$ is an $(i+1) \times (i+1)$ matrix with $D_{jj} = -1$ for all $1 \le j \le i+1$, $D_{j (j+1)}=1$ for all $1 \le j \le i$ and $D_{jl}=0$ otherwise. The process $\{\Psib(t) : \, t \ge 0\}$ will be used to analyze the evolution of the first $i+1$ gaps before any of them hit zero or the lowest $i$ particles interact with the higher ranked particles (in an appropriate sense), as stated more precisely later.

It can be checked that $D$ is non-singular and $(D^{-1})_{jl} = -1$ for all $1 \le j \le l \le i+1$ and $(D^{-1})_{jl} = 0$ otherwise. Let $\tilde \Psib(0) \doteq (z_1,\dots,z_{i-1}, z_i + \delta_1,(z_{i+1} - \delta_2)/2)'$ and define {\SB$\vbd \doteq D^{-1}(\tilde\Psib(0) - \Psib(0)) = (\delta_2-\delta_1,\dots,\delta_2 - \delta_1,\delta_2)'$}. Also define the stopping time
$$
\sigma \doteq \inf\{t \ge 0: \, \vbd' \Bbd^{(i)}(t) = \|\vbd\|^2/2\},
$$
where $\|\cdot\|$ denotes the standard Euclidean norm.
Define the \emph{mirror coupled} $(i+1)$-dimensional Brownian motion $\tilde \Bbd^{(i)}$ by
\begin{equation}\label{mirror}
\tilde \Bbd^{(i)}(t) = \left(I - \frac{2\vbd \vbd'}{\|\vbd\|^2}\right)\Bbd^{(i)}(t)1_{[t \le \sigma]} + (\Bbd^{(i)}(t) - \vbd)1_{[t > \sigma]}, \ t \ge 0.
\end{equation}
Since $\left(I - \frac{2\vbd \vbd'}{\|\vbd\|^2}\right)$ is a unitary matrix, it follows from
the strong Markov property that $\tilde \Bbd^{(i)}$ is indeed a Brownian motion. {\SB$\tilde\Bbd^{(i)}$ can be thought of as the reflection of the Brownian motion $\Bbd^{(i)}$ in a hyperplane perpendicular to the vector $\vbd$ till the first time $\sigma$ when $\Bbd^{(i)}$ hits this hyperplane (which is also the first meeting time of $\Bbd^{(i)}$ and $\tilde\Bbd^{(i)}$), and then coalescing with $\Bbd^{(i)}$. Using $\tilde\Bbd^{(i)}$}, define a coupled version of the process $\Psib$ by
$$
\tilde\Psib(t) = \tilde \Psib(0) + D \tilde \Bbd^{(i)}(t) + \bbd t, \ t \ge 0.
$$
Extend $\tilde \Bbd^{(i)}$ to an infinite collection of standard Brownian motions $\tilde \Bbd = (\tilde B_0, \tilde B_1,\dots)' \doteq (\tilde \Bbd^{(i)}, B_{i+1},\dots)'$.

Our arguments will involve a coupling of two copies of the infinite ordered $\gbd$-Atlas model started from $\ybd$ and $\ybd^{\delta_1,\delta_2}$ and respectively driven by the Brownian motions $\{B_j\}_{j \in \NN_0}$ and $\{\tilde B_j\}_{j \in \NN_0}$. For the finite particle $\gbd$-Atlas model, this coupling can be directly constructed using the existence of a strong solution to the finite version of the SDE \eqref{order_SDE}. However, for the infinite $\gbd$-Atlas model, this is a delicate issue. We will use the recipe of \emph{approximative versions} of \cite{AS}, which we now introduce.
\begin{definition}
	\label{def:sttappver}
Suppose $\xbd\in \mathcal{U}$ and
consider a collection of independent standard Brownian motions $\{B^*_j\}_{j \in \NN_0}$. Consider for fixed $m \in \NN$, the system of SDE in \eqref{order_SDE} for $i=0, 1, \ldots, m$, with 
 starting configuration $X^*_i(0) = x_i$, $0 \le i \le m$, local times of collision between the $(i-1)$-th and $i$-th particles denoted by $L^*_i, 1\le i \le m$, and $L^*_{0}(\cdot) \equiv L^*_{m+1}(\cdot) \equiv 0$.
Denote by 
 $\Xbd^{*,(m)}(\cdot) =(X^{*,(m)}_0(\cdot),\dots, X^{*,(m)}_m(\cdot))'$ and $\Lbd^{*,(m)}(\cdot) =(L^{*,(m)}_0(\cdot),\dots, L^{*,(m)}_m(\cdot))'$
 the unique strong solution to this finite-dimensional system of reflected SDE with driving Brownian motions $(B^*_0(\cdot),\dots,B^*_m(\cdot))'$. 

Then (see  \cite[Definition 7 and Theorem 3.7]{AS}, \cite[Lemma 6.4]{AS} and the discussion following it), there exist continuous $\RR^{\infty}$-valued processes $\Xbd^*(\cdot) \doteq (X^*_i(\cdot) : i \in \mathbb{N}_0)'$,
 $\Lbd^*(\cdot) \doteq (L^*_i(\cdot) : i \in \mathbb{N}_0)'$, adapted to $\clf_t \doteq \sigma \{B^*_i(s): s\le t, i \in \NN_0\}$, such that, a.s. $\Xbd^*$ satisfies \eqref{order_SDE} with associated local times given by  $\Lbd^*$ and for any $T\in (0,\infty)$,
  $$
 \lim_{m \rightarrow \infty} \sup_{t \in [0,T]}\left[\left| X^{*,(m)}_i(t) - X^*_i(t)\right| +  \left| L^{*,(m)}_i(t) - L^*_i(t)\right|\right] = 0 \ \text{ a.s., for all } i \in \mathbb{N}_0.
 $$
 We will call $\Xbd^*(\cdot)$ the `infinite ordered $\gbd$-Atlas model' with driving Brownian motions $\{B^*_j\}_{j \in \NN_0}$ started from $\xbd= (x_0,x_1, x_2\dots)'$.
\end{definition}

We denote the infinite ordered $\gbd$-Atlas model defined on $(\Om, \clf, \PP)$ with initial condition $\ybd$ and driving Brownian motions $\{B_j\}_{j \in \NN_0}$ as $\Xbd = (X_0, X_1, \ldots)'$.
Similarly, denote the infinite ordered $\gbd$-Atlas model defined on $(\Om, \clf, \PP)$ with initial condition $\ybd^{\delta_1,\delta_2}$ and driving Brownian motions $\{\tilde B_j\}_{j \in \NN_0}$ as $\tilde \Xbd = (\tilde X_0, \tilde X_1, \ldots)'$.
Denote the gap processes associated with $\Xbd$ and $\tilde \Xbd$ as $\Zbd$ and $\tilde \Zbd$ respectively, namely
$$Z_i \doteq X_i - X_{i-1}, \;\; \tilde Z_i \doteq \tilde X_i - \tilde X_{i-1}, \;; i \in \NN.$$
It then follows that
\begin{equation}
\PP_{\zbd, \delta_1,\delta_2, i} \doteq \PP \circ (\Zbd, \tilde \Zbd)^{-1}  \label{eq:coup}
\end{equation}
is a coupling of the gap process of the $\gbd$-Atlas model with initial distributions $\delta_{\zbd}$ and
$\delta_{\zbd^{\delta_1,\delta_2,i}}$. Moreover, the process $\{\Psib(t) : \, t \ge 0\}$ gives the evolution of $\{\left(Z_1(t), \dots, Z_i(t), \frac{y_i + y_{i+1} + \delta_2}{2} - X_{i}(t)\right)' : \, t \ge 0\}$ before any of the co-ordinates of $\Psib(\cdot)$ hit zero or $X_{i+1}$ hits the level $\frac{y_i + y_{i+1} + \delta_2}{2}$ (note that $z_{i+1}>\delta_2$ guarantees that $X_{i+1}(0)>\frac{y_i + y_{i+1} + \delta_2}{2}$). This can be seen from \eqref{order_SDE}.
Similarly, $\{\tilde\Psib(t) : \, t \ge 0\}$ gives the evolution of $\{\left(\tilde Z_1(t), \dots, \tilde Z_i(t), \frac{y_i + y_{i+1} + \delta_2}{2} - \tilde X_i(t)\right)' : \, t \ge 0\}$ before any of the co-ordinates of $\tilde\Psib(\cdot)$ hit zero or $\tilde X_{i+1}$ hits the level $\frac{y_i + y_{i+1} + \delta_2}{2}$ from above.

We will now construct tractable events of positive probability under which the `mirror coupled' processes $\Psib$ and $\tilde \Psib$ will successfully couple before any of their co-ordinates hit zero or $X_{i+1}$ (equivalently, $\tilde X_{i+1}$) hits the level $\frac{y_i + y_{i+1} + \delta_2}{2}$. Towards this end, observe that $\mathcal{D} \doteq \{D^{-1} \xbd : \xbd \in \mathbb{R}^{i+1}_+\}$ is a polyhedral convex domain contained in the nonpositive orthant of $\mathbb{R}^{i+1}$. Let $\Lmc$ denote the line segment joining $D^{-1}\Psib(0)$ and $D^{-1}\tilde\Psib(0)$. By convexity of $\mathcal{D}$, and since $\Psib(0), \tilde \Psib(0) >0$,
\begin{equation}\label{rdef}
r \doteq  \inf_{\ubd \in \Lmc} \operatorname{dist}(\ubd, \partial \mathcal{D}) >0,
\end{equation}
where $\partial \mathcal{D}$ denotes the boundary of $\mathcal{D}$ and $\operatorname{dist}$ denotes Euclidean distance of a point from this set. Also define the processes
\begin{align*}
M(t) \doteq \frac{\vbd'}{\|\vbd\|}\Bbd^{(i)}(t), \
\Mbd^{\perp}(t) \doteq \left(I - \frac{\vbd \vbd'}{\|\vbd\|^2}\right)\Bbd^{(i)}(t) = \left(I - \frac{\vbd \vbd'}{\|\vbd\|^2}\right)\tilde\Bbd^{(i)}(t), \ t \ge 0,
\end{align*}
where the last equality can be verified from \eqref{mirror}. Observe that $M$ is a standard Brownian motion and moreover, $M$ and $\Mbd^{\perp}$ are independent Gaussian processes. From a geometric point of view, $2M$ denotes the component of $\tilde \Bbd^{(i)} - \Bbd^{(i)}$ along the vector $\vbd$ and $\Mbd^{\perp}$ denotes the `synchronously' coupled projections of $\tilde \Bbd^{(i)}$ and $\Bbd^{(i)}$ along the hyperplane perpendicular to $\vbd$. Define the stopping time
$$
\tau^* \doteq \inf\{t \ge 0: M(t) = -r/4 \text{ or } \|\Mbd^{\perp}(t)\| = r/4\}.
$$
Consider any $t \le \sigma \wedge \tau^* \wedge \frac{r}{8\|D^{-1}\bbd\|+1}$. Note that, if $M(t) \ge 0$, then, using $t \le \sigma$, 
$$\ubd \doteq D^{-1}\Psib(0) + M(t) \frac{\vbd}{\|\vbd\|} =
\left(1 - \frac{\vbd'\Bbd^{(i)}(t)}{\|\vbd\|^2}\right) D^{-1}\Psib(0) + \frac{\vbd'\Bbd^{(i)}(t)}{\|\vbd\|^2} D^{-1}\tilde \Psib(0)
 \in \Lmc.$$
  Furthermore,
$$
\|D^{-1}\Psib(t) - \ubd\| = \|\Mbd^{\perp}(t) + D^{-1}\bbd t\| \le \|\Mbd^{\perp}(t)\| + \|D^{-1}\bbd \|t \le \frac{3r}{8}.
$$
Hence, by definition of $r$, $D^{-1}\Psib(t) \notin \partial \mathcal{D}$. If, on the other hand, $M(t) <0$, then recalling $t \le \tau^*$,
\begin{align*}
\|D^{-1}\Psib(t) - D^{-1}\Psib(0)\| &= \|\Bbd^{(i)}(t) + D^{-1}\bbd t\|=
\|M(t) \frac{\vbd}{\|\vbd\|} + \Mbd^{\perp}(t) + D^{-1}\bbd t\|\\
& \le |M(t)| + \|\Mbd^{\perp}(t)\| + \|D^{-1}\bbd \|t \le \frac{5r}{8},
\end{align*}
again implying $D^{-1}\Psib(t) \notin \partial \mathcal{D}$. Hence, we conclude that, on the event $\{\sigma \le \tau^* \wedge \frac{r}{8\|D^{-1}\bbd\|+1}\}$, $D^{-1}\Psib(t) \notin \partial \mathcal{D}$ for all $t \le \sigma$. A similar argument gives $D^{-1}\tilde\Psib(t) \notin \partial \mathcal{D}$ for all $t \le \sigma$ on the same event.

Since $\gbd \in \cld$, the sequence $\{g_i\}$ is bounded. Let $g^l \in [1,\infty)$ be such that
$$g_j \ge -g^l \mbox{ for all } j \in \NN_0.$$
Fix any
\begin{equation}\label{eq:506}
	\underline{\delta}  \in (0, z_{i+1}/(2 + 4g^l)).\end{equation}
For $s \in (0, \underline{\delta}]$ and $\delta_1, \delta_2 \in (0, \underline{\delta})$, define the following events in $\clf$:
\begin{align}\label{evdef}
\cle^1(s) &\doteq \left\{ \inf_{j\ge i+1}\inf_{0 \le t \le s} (y_j +B_j(r)) > \frac{y_i + y_{i+1} + \delta_2}{2} + g^l \underline{\delta}\right\},\notag\\
\cle^2(s) & \doteq \left\{\inf_{t \le s \wedge \frac{r}{8\|D^{-1}\bbd\|+1}} M(t) > -r/4, \ \sup_{t \le s \wedge \frac{r}{8\|D^{-1}\bbd\|+1}} M(t) \ge \|\vbd\|/2, \ \sup_{t \le s \wedge \frac{r}{8\|D^{-1}\bbd\|+1}} \|\Mbd^{\perp}(t)\| < r/4\right\}.\notag\\
\end{align}

Let $\cle(s) \doteq \cle^1(s) \cap \cle^2(s)$. For notational convenience, we suppress the dependence of $\cle^1(s), \cle^2(s),\cle(s)$ on $\delta_1,\delta_2$.

We claim that $\cle(s) \subseteq \{\tau_c \le s\}$. To see this, observe that on the event $\cle^1(s)$, the ordered Atlas particles $X_{i+1}$ and $\tilde X_{i+1}$ stay above the level $\frac{y_i + y_{i+1} + \delta_2}{2}$ by time $s$. Moreover, on $\cle^2(s)$, $\{\sigma \le \tau^* \wedge \frac{r}{8\|D^{-1}\bbd\|+1} \wedge s\}$ which, by the previous discussion, implies $D^{-1}\Psib(t) \notin \partial \mathcal{D}$ and $D^{-1}\tilde\Psib(t) \notin \partial \mathcal{D}$ for all $t \le \sigma$, that is, none of the co-ordinates of $\Psib(\cdot)$ or $\tilde \Psib(\cdot)$ hit zero by time $\sigma$. Hence, for all $t \le \sigma$, $\Psib(t) = \left(Z_1(t), \dots, Z_i(t), \frac{y_i + y_{i+1} + \delta_2}{2} - X_{i}(t)\right)'$ and $\tilde\Psib(t) = \left(\tilde Z_1(t), \dots, \tilde Z_i(t), \frac{y_i + y_{i+1} + \delta_2}{2} - \tilde X_{i}(t)\right)'$. Further, by the mirror coupling dynamics, $\Psib(\sigma) = \tilde\Psib(\sigma)$ and thus, under $\cle(s)$,
$$
\left(Z_1(t), \dots, Z_i(t), \frac{y_i + y_{i+1} + \delta_2}{2} - X_{i}(t)\right) = \left(\tilde Z_1(t), \dots, \tilde Z_i(t), \frac{y_i + y_{i+1} + \delta_2}{2} - \tilde X_{i}(t)\right)
$$
for all $t \ge \sigma$. As $\sigma \le s$ under $\cle(s)$, we conclude that the above equality holds for all $t \ge s$. Finally, as $X_i$ and $X_{i+1}$ (also, $\tilde X_i$ and $\tilde X_{i+1}$) do not meet by time $\sigma$, $X_j(t) = \tilde X_j(t)$ for all $j \ge i+1$, for all $t \le \sigma$, and hence for all $t \ge 0$. These observations imply $\tau_c \le s$ on $\cle(s)$.

The following lemma gives a key estimate on the probabilities of this event. Recall $t_0>0$ for which $\psi = T_{t_0}\psi_0$.

\begin{lemma}\label{lemcty}
	For any $\eta>0$, there is a $\delta_0 \in (0, \underline{\delta})$ and $t_1 \in (0,t_0 \wedge \underline{\delta})$ such that $\PP_{\zbd, \delta_1,\delta_2,i}(\tau_c> t_1) \le \PP(\cle(t_1)^c) \le \eta$ for all $\delta_1, \delta_2 \in (0, \delta_0)$.
\end{lemma}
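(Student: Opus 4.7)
The first inequality $\PP_{\zbd,\delta_1,\delta_2,i}(\tau_c > t_1) \le \PP(\cle(t_1)^c)$ is immediate from the inclusion $\cle(s) \subseteq \{\tau_c \le s\}$ established in the paragraph preceding the lemma, so the plan focuses on the second inequality. The overall strategy is to use $\PP(\cle(s)^c) \le \PP(\cle^1(s)^c) + \PP(\cle^2(s)^c)$ and bound each summand by $\eta/2$ by choosing first $t_1$ small and then $\delta_0$ small afterwards.

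For $\cle^1(s)^c$, set $L \doteq (y_i + y_{i+1} + \delta_2)/2 + g^l \underline{\delta}$. Using \eqref{eq:506} and $\delta_2 \le \underline{\delta}$, a direct computation gives $y_{i+1} - L \ge z_{i+1}/4$, and since $y_j \ge y_{i+1}$ for all $j \ge i+1$, this yields the uniform lower bound $y_j - L \ge z_{i+1}/4$. Applying the reflection principle to each independent standard Brownian motion $B_j$ gives $\PP(\inf_{t \le s}(y_j + B_j(t)) \le L) \le 2 e^{-(y_j - L)^2/(2s)}$, and a union bound produces $\PP(\cle^1(s)^c) \le 2 \sum_{j \ge i+1} e^{-(y_j - L)^2/(2s)}$. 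The hypothesis $\ybd \in \clu$ gives $\sum_j e^{-\alpha (y_j)_+^2} < \infty$ for every $\alpha > 0$, so for $s$ small the summand is dominated by a summable term, and a dominated convergence argument forces the series to $0$ as $s \to 0$, uniformly in $\delta_1, \delta_2 \in (0, \underline{\delta})$.

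For $\cle^2(s)^c$, the key geometric observation is that $r$ stays uniformly bounded below for small $\delta_1, \delta_2$. Indeed, as $(\delta_1, \delta_2) \to (0,0)$ both endpoints of the segment $\Lmc$ converge to the common point $p_0 \doteq D^{-1}(z_1, \ldots, z_i, z_{i+1}/2)'$, which, since $\zbd > 0$, lies in the interior of $\mathcal{D}$. Setting $r_0 \doteq \dist(p_0, \partial \mathcal{D}) > 0$, continuity provides $\delta_0' \in (0, \underline{\delta})$ such that $r \ge r_0/2$ whenever $\delta_1, \delta_2 \in (0, \delta_0')$. Writing $T(s) \doteq s \wedge \frac{r}{8\|D^{-1}\bbd\|+1}$, a union bound splits $\cle^2(s)^c$ into
\begin{equation*}
\Bigl\{\inf_{t \le T(s)} M(t) \le -\tfrac{r}{4}\Bigr\} \cup \Bigl\{\sup_{t \le T(s)} M(t) < \tfrac{\|\vbd\|}{2}\Bigr\} \cup \Bigl\{\sup_{t \le T(s)} \|\Mbd^{\perp}(t)\| \ge \tfrac{r}{4}\Bigr\}.
\end{equation*}
On $\{r \ge r_0/2\}$ the first and third events are contained in the $(\delta_1,\delta_2)$-independent events $\{\inf_{t \le s} M(t) \le -r_0/8\}$ and $\{\sup_{t \le s} \|\Mbd^{\perp}(t)\| \ge r_0/8\}$, whose probabilities vanish as $s \to 0$ by standard Gaussian tail estimates (note that the distribution of $\|\Mbd^{\perp}\|$ is independent of $\vbd$ by rotation invariance).

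The delicate middle event is the main obstacle: it requires the target $\|\vbd\|/2$ to be small compared with $\sqrt{T(s)}$, which forces us to choose $t_1$ \emph{before} $\delta_0$. Accordingly, first pick $t_1 \in (0, t_0 \wedge \underline{\delta})$ small enough that $t_1 \le (r_0/2)/(8\|D^{-1}\bbd\|+1)$ (ensuring $T(t_1) = t_1$ whenever $r \ge r_0/2$) and that the sum $\PP(\cle^1(t_1)^c) + \PP(\inf_{t \le t_1} M(t) \le -r_0/8) + \PP(\sup_{t \le t_1} \|\Mbd^{\perp}(t)\| \ge r_0/8)$ is below $\eta/2$. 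Then, with $t_1$ fixed, the reflection-principle identity $\PP(\sup_{t \le t_1} M(t) < a) = 2\Phi(a/\sqrt{t_1}) - 1 \to 0$ as $a \downarrow 0$, together with the continuous dependence $\|\vbd\| = \|D^{-1}(\tilde\Psib(0) - \Psib(0))\| \to 0$ as $(\delta_1, \delta_2) \to (0,0)$, allows us to pick $\delta_0 \in (0, \delta_0')$ so that $\PP(\sup_{t \le t_1} M(t) < \|\vbd\|/2) < \eta/2$ for all $\delta_1, \delta_2 < \delta_0$. Combining these estimates yields $\PP(\cle(t_1)^c) < \eta$, completing the plan.
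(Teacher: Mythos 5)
Your argument is correct and follows essentially the same route as the paper: control $(\cle^1(s))^c$ uniformly in $(\delta_1,\delta_2)$ as $s \to 0$ via Gaussian tail estimates and $\ybd \in \clu$, observe that $r$ stays bounded away from zero while $\|\vbd\| \to 0$ as $(\delta_1,\delta_2) \to (0,0)$, and then --- crucially, and just as the paper does --- choose $t_1$ first and $\delta_0$ second so that the threshold $\|\vbd\|/2$ is small relative to $\sqrt{t_1}$. The one variation is cosmetic: the paper exploits the independence of $M$ and $\Mbd^\perp$ to write $\PP((\cle^2(s))^c)$ as one minus a product of two probabilities, whereas you bound it by a union over the three failure events; both close the estimate equally well.
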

\begin{proof}
	Note that the inequality $\PP_{\zbd, \delta_1,\delta_2,i}(\tau_c> t_1) \le \PP(\cle(t_1)^c)$ is immediate from the discussion above the lemma.
	Now fix $\eta \in (0,1)$. Constants appearing in this proof may depend on $\zbd$ and this dependence is not noted explicitly. By a union bound and properties of Brownian motion we see that for any $\delta_2 \in (0, \underline{\delta})$ and $s \in (0,1)$,
	\begin{equation}\label{eq:eq508}
	\PP((\cle^1(s))^c) \le 2 \sum_{j=i+1}^{\infty} \bar \Phi\left(\frac{y_j - y_{i+1} + \frac{1}{2}(z_{i+1}-\delta_2) - g^l \underline{\delta}}{\sqrt{s}}\right),	
	\end{equation}
	where for $u \in \RR$,
	\begin{equation}\label{eq:gauscdf}
		\bar \Phi(u) = \frac{1}{\sqrt{2\pi}} \int_{u}^{\infty} e^{-v^2/2} dv. \end{equation}
	Note that, for $u \ge 0$, $\bar \Phi(u) \le \sqrt{2} e^{-u^2/4}$.
By our condition on $\underline{\delta}$ in \eqref{eq:506} and using $\delta_2 \in (0, \underline{\delta})$,
$$\frac{1}{2}(z_{i+1}-\delta_2) - g^l \underline{\delta} \ge \frac{1}{2} (z_{i+1} - (2g^l+1)\underline{\delta}) \doteq c_1 >0.$$
Thus, from \eqref{eq:eq508} it follows that
\begin{align}\label{eq:A}
\PP((\cle^1(s))^c) &\le \sqrt{8} e^{-c_1^2/4s}\sum_{j=i+1}^{\infty} e^{-(y_j-y_{i+1})^2/4s}\\
&\le \sqrt{8} e^{-c_1^2/4s}\sum_{j=i+1}^{\infty} e^{-(y_j-y_{i+1})^2/4}
\le \sqrt{8} e^{-c_1^2/4s}   e^{y_{i+1}^2/4}\sum_{j=i+1}^{\infty}e^{-y_j^2/8},
\end{align}
where the first inequality uses $(a+b)^2 \ge a^2+b^2$ for $a,b\ge 0$, the second uses the fact that $s \in (0,1)$, and the third uses the inequality $(a-b)^2 \ge a^2/2 - b^2$ for $a,b\in \RR$. Thus, choosing $t' \in (0,1)$ such that
$$\sqrt{8} e^{-c_1^2/4t'}   e^{y_{i+1}^2/4}\sum_{j=i+1}^{\infty}e^{-y_j^2/8} \le \eta/4,$$
(here we use $\ybd \in \clu$), we obtain
\begin{equation}\label{ebd1}
\PP((\cle^1(s))^c) \le \eta/4 \text{ for all } \delta_2 \in (0, \underline{\delta}) \text{ and all } s \le t'.
\end{equation}
Without loss of generality we assume that $t' < t_0 \wedge \underline{\delta}$. Next, note that, by the independence of $M$ and $\Mbd^{\perp}$, for any $s \in (0, \frac{r}{8\|D^{-1}\bbd\|+1}]$,
\begin{equation}\label{e20}
\PP((\cle^2(s))^c)
 = 1 - \PP\left(\inf_{t \le s } M(t) > -r/4, \ \sup_{t \le s} M(t) \ge \|\vbd\|/2\right)\PP\left(\sup_{t \le s} \|\Mbd^{\perp}(t)\| < r/4\right).
\end{equation}
Recall $r$ defined in \eqref{rdef}. Writing $r = r(\delta_1,\delta_2)$ to highlight its dependence on $\delta_1, \delta_2$, it follows from \eqref{rdef} and the explicit form of $\Psib(0)$ and $\tilde\Psib(0)$ that there exists $\delta' \in (0, \underline{\delta})$ and $c_2>0$ such that 
$$
\inf_{\delta_1, \delta_2 \in (0, \delta')} r(\delta_1,\delta_2) \doteq c_2>0.
$$
Hence, we obtain $t_1 \in (0, t' \wedge \frac{r}{8\|D^{-1}\bbd\|+1})$ such that
\begin{equation}\label{e21}
\PP\left(\inf_{t \le t_1} M(t) > -r/4\right) \ge 1- \eta/4, \ \ \PP\left(\sup_{t \le t_1} \|\Mbd^{\perp}(t)\| < r/4\right) \ge 1-\eta/4, \text{ for all } \delta_1, \delta_2 \in (0, \delta').
\end{equation}
Recall $\vbd = D^{-1}(\tilde\Psib(0) - \Psib(0))$. As $\tilde\Psib(0) - \Psib(0) = (0,\dots, 0 , \delta_1, - \delta_2)$ and $D^{-1}$ depends only on $i$ and not on $\delta_1, \delta_2$, we can obtain $c_3>0$ depending only on $i$ such that
$$
\|\vbd\| \le c_3 \sqrt{\delta_1^2 + \delta_2^2} \text{ for all } \delta_1, \delta_2>0.
$$
Hence, we obtain $\delta_0 \in (0, \delta')$ depending on $t_1$ such that
\begin{equation}\label{e22}
\PP\left(\sup_{t \le t_1} M(t) \ge \|\vbd\|/2\right) \ge1- \eta/4 \text{ for all } \delta_1, \delta_2 \in (0, \delta_0).
\end{equation}
From \eqref{e21} and \eqref{e22}, for all $\delta_1, \delta_2 \in (0, \delta_0)$,
\begin{equation}\label{e23}
\PP\left(\inf_{t \le t_1} M(t) > -r/4, \ \sup_{t \le t_1} M(t) \ge \|\vbd\|/2\right) \ge \PP\left(\inf_{t \le t_1} M(t) > -r/4\right) - \PP\left(\sup_{t \le t_1} M(t) < \|\vbd\|/2\right) \ge 1-\eta/2.
\end{equation}
Using \eqref{e21} and \eqref{e23} in \eqref{e20}, we obtain for all $\delta_1, \delta_2 \in (0, \delta_0)$,
\begin{equation}\label{ebd2}
\PP((\cle^2(t_1))^c) \le 1- (1-\eta/2)(1-\eta/4) \le 3\eta/4.
\end{equation}
From \eqref{ebd1} and \eqref{ebd2}, we conclude
$$
\PP((\cle(t_1))^c) \le \PP((\cle^1(t_1))^c)  + \PP((\cle^2(t_1))^c) \le \eta \text{ for all } \delta_1, \delta_2 \in (0, \delta_0),
$$
which proves the lemma.
\end{proof}

{\bf Proof of Proposition \ref{prop:prop2}.}
In order to prove the proposition it suffices to show the right continuity of $(\delta_1,\delta_2) \mapsto \psi(\zbd + \delta_1 \ebd_i - \delta_2\ebd_{i+1})$ at $(\delta_1,\delta_2)=(0,0)$. 
Recall that $\psi = T_{t_0}\psi_0$. 
Let $\veps>0$ be arbitrary and let $\eta \doteq \veps/ (2\|\psi_0\|_{\infty})$.
Let $\underline{\delta}$ be as in \eqref{eq:506} and, for this chosen $\eta$, let $\delta_0 \in (0, \underline{\delta})$ and $t_1 \in (0,t_0 \wedge \underline{\delta})$ be as in Lemma \ref{lemcty}.
For $\delta_1>0$, $\delta_2 \in (0, z_{i+1})$, let $\PP_{\zbd, \delta_1,\delta_2,i}$ be as in \eqref{eq:coup} and let $\EE_{\zbd, \delta_1,\delta_2,i}$ be the corresponding expectation operator. For any $\delta_1, \delta_2 \in (0, \delta_0)$,
\begin{align*}
|\psi(\zbd + \delta_1 \ebd_i - \delta_2\ebd_{i+1})- \psi(\zbd)|	&=
|\EE_{\zbd^{\delta_1,\delta_2,i}} \psi_0(\Zbd(t_0)) - \EE_{\zbd} \psi_0(\Zbd(t_0))|\\
&\le \EE_{\zbd,\delta_1,\delta_2,i} \left[|\psi_0(\Zbd^{(1)}(t_0)) - \psi_0(\Zbd^{(2)}(t_0))|1_{\{\tau_c > t_1\}}\right]\\
&\le 2\|\psi_0\|_{\infty} \PP_{\zbd,\delta_1,\delta_2,i} (\tau_c>t_1)\\
&\le  2\|\psi_0\|_{\infty} \PP(\cle(t_1)^c) \le 2\|\psi_0\|_{\infty}\eta = \veps,
\end{align*}
where the fourth inequality uses Lemma \ref{lemcty}.
Since $\veps>0$ is arbitrary, the result follows. \hfill \qed \\

{\bf Proof of Proposition \ref{prop:prop1}.}
Let $\zbd$ and $\delta_1,\delta_2$ be as in the statement of the proposition and let $\PP_{\zbd, \delta_1,\delta_2,i}$ be as in \eqref{eq:coup}.
Recall the event $\cle^2(s)$ from \eqref{evdef} (defined for any $s>0$) and consider the following modification of $\cle^1(s)$:
\begin{align*}
\tilde\cle^1(s) \doteq \left\{ \inf_{j\ge i+1}\inf_{0 \le t \le s} (y_j +B_j(t) - t g^l) > \frac{y_i + y_{i+1} + \delta_2}{2} \right\}, \ s >0.
\end{align*}
Let $\tilde \cle(s) \doteq \tilde\cle^1(s) \cap \cle^2(s)$. From the definition of the coupling $\PP_{\zbd, \delta_1,\delta_2,i}$ it is easily seen by an argument similar to that given before Lemma \ref{lemcty} that
$$\PP_{\zbd, \delta_1,\delta_2,i}(\Zbd^{(1)}(s) = \Zbd^{(2)}(s)) \ge \PP(\tilde \cle(s)).$$

Note that, as $\tilde\cle^1(s)$ is given in terms of  Brownian motions $\{B_j\}_{j \ge i+1}$ and $\cle^2(s)$ is defined in terms of $\{B_j\}_{j \le i}$, $\tilde\cle^1(s)$ and $\cle^2(s)$ are independent. Thus, for any $s>0$, $\PP(\tilde \cle(s)) = \PP(\tilde \cle^1(s))\PP(\cle^2(s))$ and it suffices to show that each term in the product is positive.

As $\ybd \in \mathcal{U}$, $y_j \rightarrow \infty$ as $j \rightarrow \infty$. Hence, we can obtain $j_0 \ge i+2$ depending on $s,i$ such that $y_j \ge 2sg^l + 2(y_i + y_{i+1} + \delta_2)$ for all $j \ge j_0$ and $\sum_{j=j_0}^{\infty} \bar \Phi\left(\frac{y_j}{4\sqrt{s}}\right) \le 1/4$. Thus,
\begin{align}\label{pos1}
\PP\left( \inf_{j\ge j_0}\inf_{0 \le t \le s} (y_j +B_j(t) - t g^l) > \frac{y_i + y_{i+1} + \delta_2}{2} \right) &\ge \PP\left( \inf_{j\ge j_0}\inf_{0 \le t \le s} (\frac{y_j}{2} +B_j(t)) > \frac{y_i + y_{i+1} + \delta_2}{2} \right)\notag\\
& \ge 1 - 2 \sum_{j=j_0}^{\infty} \bar \Phi\left(\frac{y_j}{4\sqrt{s}}\right) \ge 1/2.
\end{align}
Moreover, from standard Brownian motion estimates,
\begin{align}\label{pos2}
\PP\left( \inf_{i+1 \le j < j_0}\inf_{0 \le t \le s} (y_j +B_j(t) - t g^l) > \frac{y_i + y_{i+1} + \delta_2}{2} \right) >0.
\end{align}
From the independence of the events considered in \eqref{pos1} and \eqref{pos2}, we conclude that $\PP(\tilde \cle^1(s))>0$.
Finally, from standard Brownian motion estimates and the explicit form of $M(\cdot)$ and $\Mbd^{\perp}$ in terms of $\Bbd^{(i)}(\cdot)$,
\begin{multline*}
\PP\left(\inf_{t \le s \wedge \frac{r}{8\|D^{-1}\bbd\|+1}} M(t) \ge -r/4, \ \sup_{t \le s \wedge \frac{r}{8\|D^{-1}\bbd\|+1}} M(t) \ge \|\vbd\|/2, \ \sup_{t \le s \wedge \frac{r}{8\|D^{-1}\bbd\|+1}} \|\Mbd^{\perp}(t)\| \le r/4\right)\\
= \PP\left(\inf_{t \le s \wedge \frac{r}{8\|D^{-1}\bbd\|+1}} M(t) \ge -r/4, \ \sup_{t \le s \wedge \frac{r}{8\|D^{-1}\bbd\|+1}} M(t) \ge \|\vbd\|/2\right) \PP\left(\sup_{t \le s\wedge \frac{r}{8\|D^{-1}\bbd\|+1}} \|\Mbd^{\perp}(t)\| \le r/4\right) >0.
\end{multline*}
 The result follows. \hfill \qed

\section{Proof of Theorem \ref{thm_main2}}
\label{sec:mainsecthm}
Recall $\cld_1$ from \eqref{cldzdefn}. Observe that for $\gbd \in \cld_1$, $\bar g_{N_{j+1}} < \bar g_{N_j}$ for all $j \ge 1$. Moreover, as $\gbd \in \cld$, $\inf_{n \in \NN} \bar g_n \ge \inf_{n \in \NN} g_n > - \infty$. Thus, $\bar g_{\infty} \doteq \lim_{j \rightarrow \infty} \bar g_{N_j}$ exists, is finite, and $\lim_{j \rightarrow \infty} \bar g_{N_j} = \inf_{n \in \NN} \bar g_n$. As adding the same drift $-\bar g_{\infty}dt$ to each ordered particle in \eqref{order_SDE} keeps the gaps unchanged, we can assume without loss of generality that $\inf_{n \in \NN} g_n=\bar g_{\infty}=0$. In particular, $\bar g_n>0$ for all $n \in \mathbb{N}$.

Fix $\gbd \in \cld_1$ and let $\pi = \otimes_{i=1}^{\infty} \pi_i$ be as in the statement of
 Theorem \ref{thm_main2}. The assumption \eqref{eq:satinteg} on 
 $\pi$ will be taken to hold throughout the section. {\SB From Theorem \ref{sarthm} we can construct a filtered probability
space $(\Om, \clf, \PP, \{\clf_t\})$ equipped with mutually independent real $\{\clf_t\}_{t\ge 0}$-Brownian motions $\{B^*_i\}_{i \in \NN_0}$ and continuous processes $\{Y_{(i)}, i \in \NN_0\}$ that solve the SDE \eqref{order_SDE}, where $\{L^*_i\}_{i \in \NN_0}$ are as introduced below \eqref{order_SDE}, such that with $\{Z_i\}_{i\in \NN}$ defined as in \eqref{gapdef}, the process
$\Zbd = (Z_1, Z_2, \ldots)'$ has the  distribution $\PP^{\gbd}_{\pi}$. Furthermore, without loss of generality, 
we can assume that $Y_{(0)}(0)=0$. We will write $\PP$ and $\EE$ respectively for the probability and expectation under the law of this $\RR^{\infty}$-valued process.}
{\SB \subsection{Proof overview}\label{sec:pfov2}
First we give an overview of the approach. We will use moment generating functions (m.g.f) to identify the marginals of $\pi$; so the first step is to establish finiteness of the m.g.f. of any fixed gap in a positive interval around zero. This is achieved in Lemma \ref{lem:finmgf} by using comparison techniques between the gap processes of infinite and finite versions of the model, the latter having a unique invariant distribution that is a product of Exponential distributions. Lemmas \ref{lem:sqloctim} and \ref{lemunifinteg} together establish the uniform integrability of $\{\frac{1}{\eps} \int_0^1 1_{\{0 \le Z_i(s) \le \eps\}} ds, \; 
    \eps \in (0, 1/2)\}$ for any $i \in \NN$, which is later used in showing the existence of $\lim_{\eps \downarrow 0}\frac{1}{\eps}\pi_i[0,\eps]$ and to identify this as $\EE(L^*_i(1))$ ($L^*_i$ being the local time at zero of the $i$-th gap in \eqref{order_SDE}). Lemma \ref{lem:lociden}, which is key to the proof of Theorem \ref{thm_main2}, gives an explicit representation for the expectation of the integral of a function of the $i$-th gap process against the $j$-th local time process for $i \neq j$. The aforementioned uniform integrability is crucially used here. For any $i \in \NN$, the m.g.f. of the $i$-th gap at time $1$ is then identified by an application of It\^{o}'s formula to exponential functions of the gap and using the representation in Lemma \ref{lem:lociden} to evaluate the local time terms. The obtained m.g.f. corresponds to that of an exponential random variable. The associated rates are then shown to agree with that of $\pi^{\gbd}_a$ for some $a\ge -2 \inf_{n \in \NN} \bar g_n$ via a recursive relation resulting from taking expectations in \eqref{eq:zi}. The representation \eqref{arep} is obtained as a by-product of our computations.}
\subsection{Preliminary results}
We begin  with some preliminary results.
\begin{lemma}
\label{lem:finmgf}
 For any $i \in \NN$ and $\lambda < 2\sum_{k=0}^{i-1}g_k$, we have
 $\int_{\RR_+} e^{\lambda z} \pi_i(dz) <\infty$.   
\end{lemma}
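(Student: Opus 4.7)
The plan is to bound the moment generating function of $\pi_i$ via stochastic comparison with the $i$-th marginal of the (explicit) stationary distribution of a finite $N_j$-particle $\gbd$-Atlas model, exploiting the $\cld_1$ structure to let $N_j\to\infty$. The key observation is that the family $\{\pi_a^{\gbd}\}$ parametrizes stationary rates of the form $i(2\bar g_i + a)$, and the tightest bound $2i\bar g_i = 2\sum_{k=0}^{i-1}g_k$ corresponds to $a=0$, which is also the $N_j\to\infty$ limit of the finite-system rate $2i(\bar g_i - \bar g_{N_j})$.

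First I would recall that, for each $j$ with $N_j > i$, the condition $\bar g_k > \bar g_{N_j}$ for all $k=1,\dots,N_j-1$ from the definition of $\cld_1$, combined with our normalization $\bar g_\infty=0$ (so $\bar g_{N_j}>0$), is precisely the Harrison--Williams stability criterion for the $N_j$-particle $\gbd$-Atlas gap process; by \cite{harrison1987multidimensional, PP} this finite $(N_j-1)$-dimensional reflected Brownian motion has the unique product-form stationary distribution
$$\tilde \pi^{(N_j)} \doteq \bigotimes_{k=1}^{N_j-1} \operatorname{Exp}\bigl(2k(\bar g_k - \bar g_{N_j})\bigr),$$
whose $i$-th marginal is $\operatorname{Exp}(2i(\bar g_i - \bar g_{N_j}))$. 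Next I would show the stochastic ordering $\pi_i \lest \operatorname{Exp}(2i(\bar g_i - \bar g_{N_j}))$ via synchronous coupling: heuristically the finite $N_j$-particle model is the infinite $\gbd$-Atlas with drifts beyond rank $N_j-1$ sent to $+\infty$, so the extra collision local time $L^*_{N_j}$ present only in the infinite system provides additional push-down on the top of the $N_j$-window; this extra reflection, combined with M-matrix monotonicity of the Skorokhod map for the tridiagonal-Laplacian reflection structure of the gap dynamics, should propagate into component-wise ordering of the gaps. Given $\lambda < 2i\bar g_i$, choose $j$ large enough that $\bar g_{N_j} < \bar g_i - \lambda/(2i)$ (possible since $\bar g_{N_j}\downarrow 0$), so $\lambda < 2i(\bar g_i - \bar g_{N_j})$ and the MGF of $\operatorname{Exp}(2i(\bar g_i - \bar g_{N_j}))$ at $\lambda$ is the finite quantity $2i(\bar g_i - \bar g_{N_j})/[2i(\bar g_i - \bar g_{N_j}) - \lambda]$, which dominates $\int e^{\lambda z}\pi_i(dz)$ by the stochastic ordering.

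The hard part will be making the stochastic comparison between $\pi_i$ and the finite-$N_j$ stationary marginal rigorous. Monotonicity for reflected Brownian motions in orthants with M-matrix reflection is classical in finite dimensions, but here one must compare the infinite-dimensional gap process with a finite truncation, and transfer the trajectory-level domination (which at best can be established directly using the approximative-version construction of Definition \ref{def:sttappver}) into a statement about the stationary distributions of the two systems. An alternative route, should the monotonicity argument prove resistant, would be a direct It\^o-based argument: apply It\^o to a bounded cutoff $f_L(z) = e^{\lambda(z\wedge L)}$, use stationarity of $\pi$ under the gap process to extract an identity relating $\EE_{\pi_i}[e^{\lambda Z_i}\mathbf{1}_{\{Z_i\le L\}}]$ to local-time cross-integrals and the level-$L$ local time of $Z_i$, and bootstrap an $L$-uniform bound using the balance relation $\tfrac12(\ell_{i+1}+\ell_{i-1})=\ell_i + (g_i - g_{i-1})$ for the rates $\ell_j = \lim_{t\to\infty}\EE_\pi[L^*_j(t)]/t$ implied by stationarity of each gap's first moment; this route avoids stochastic comparison but imports a new difficulty in handling the cross-integrals without invoking Lemma \ref{lem:lociden}.
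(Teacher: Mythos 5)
Your proposal is essentially the same as the paper's proof: compare $\pi_i$ with the $i$-th marginal of the explicit product-form stationary distribution $\otimes_{l=1}^{N_d-1}\operatorname{Exp}(2l(\bar g_l - \bar g_{N_d}))$ of the finite $N_d$-particle system, then choose $d$ large enough that $\lambda < 2i(\bar g_i - \bar g_{N_d})$. The stochastic-comparison step you correctly identify as the technical crux is not re-derived in the paper either; it cites existing monotonicity/comparison estimates for finite versus infinite Atlas systems (\cite[Corollary 3.14]{AS} and \cite[eqs.~(58)--(60)]{AS2}) rather than establishing them from scratch.
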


\begin{proof}
    Recall the sequence $\{N_j\}$ associated with $\gbd \in \cld_1$. Fix any $d \in \mathbb{N}$ such that $N_d>i$ and consider the $N_d$ dimensional $(\gbd,\ybd)$-Atlas model defined by replacing $\infty$ with $N_d-1$ in equation 
    \eqref{eq:unrank}. This model has been studied extensively in previous works (see e.g. \cite{AS, AS2}) and it is well known that, since $\gbd \in \cld_1$, the associated gap sequence $\{Z_j\}_{j=1}^{N_d-1}$ defined by 
    \eqref{gapdef}, where the processes $Y_{(j)}$ are defined by 
    \eqref{order_SDE}, for $j = 0, 1, \ldots , N_d-1$, has a unique stationary distribution $\tilde \pi^{(N_d-1)} \doteq
    \otimes_{l=1}^{N_d-1} \mbox{Exp}(2l(\bar g_l - \bar g_{N_d}))$
    (see \cite[Proposition 2.2(4)]{AS}).
    Using monotonocity and comparison estimates for finite and infinite Atlas models (cf. \cite[Corollary 3.14]{AS} and \cite[equations (58)-(60)]{AS2}) it now follows that the probability measure $\pi|_{N_d-1}$ on $\RR_+^{N_d-1}$ given as the 
    first $N_d-1$ marginal distribution of $\pi$ satisfies 
    $\pi|_{N_d-1} \lest \tilde \pi^{(N_d-1)}$.
    In particular, $\pi_i \lest \mbox{Exp}(2i(\bar g_i - \bar g_{N_d}))$ for all $N_d>i$.
    Since $\bar g_{N_d} \to 0$ as $d\to \infty$, we can find a $d \in \NN$, with $N_d>i$, such that $\lambda < 2i(\bar g_i - \bar g_{N_d})$.
    Then 
    $$\int_{\RR_+} e^{\lambda z} \pi_i(dz) \le
    \int_{\RR_+} e^{\lambda z} \mbox{Exp}(2i(\bar g_i - \bar g_{N_d}))(dz) <\infty$$
    which completes the proof.
\end{proof}

The next three lemmas concern the collection $\{Z_i, L^*_i\}$ described above.
{\SB We remind the reader that in these lemmas the probability measure $\mathbb{P}$ and the expectation $\mathbb{E}$ correspond to $\PP^{\gbd}_{\pi}$ and $\EE^{\gbd}_{\pi}$ respectively, where $\gbd$ and $\pi$ are as fixed at the beginning of the section.}
\begin{lemma}
\label{lem:sqloctim}
    For every $i \in \NN$, $\EE(L^*_i(1))^2<\infty$.
\end{lemma}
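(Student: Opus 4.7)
\textbf{Proof plan for Lemma \ref{lem:sqloctim}.}
The plan is to derive a pathwise formula for $L^*_i(1)$ by summing the reflected SDEs \eqref{order_SDE} for $Y_{(0)},\ldots,Y_{(i-1)}$ and then to control each resulting term. Summing and using the telescoping identity for the local-time terms together with $L^*_0\equiv 0$, only $-\tfrac12 L^*_i(t)$ survives on the local time side. Writing $Y_{(k)}(t)-Y_{(0)}(t)=\sum_{j=1}^k Z_j(t)$ and using the normalization $Y_{(0)}(0)=0$, I would arrive at the identity
\begin{equation}\label{eq:loctimplan}
L^*_i(1) \;=\; 2\sum_{j=1}^{i-1}(i-j)\bigl(Z_j(0)-Z_j(1)\bigr) \;-\; 2i\,Y_{(0)}(1) \;+\; 2i\bar g_i \;+\; 2\sum_{k=0}^{i-1} B^*_k(1).
\end{equation}
It then suffices, via the inequality $(\sum_{\ell=1}^n a_\ell)^2\le n\sum_\ell a_\ell^2$, to establish finite second moments for every term on the right-hand side of \eqref{eq:loctimplan}.

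Three of the four families are routine. The Gaussian term $\sum_{k=0}^{i-1} B^*_k(1)$ has finite variance and the $\bar g_i$ term is deterministic. For $Z_j(0)$ and $Z_j(1)$, stationarity of $\pi$ reduces the problem to bounding $\int z^2\,\pi_j(dz)$, and Lemma \ref{lem:finmgf} gives finite exponential moments of $\pi_j$ (in a neighborhood of the origin), hence all polynomial moments. The only delicate term is $\EE Y_{(0)}(1)^2$, and this is where I expect to use the integrability hypothesis \eqref{eq:satinteg}.

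To handle $Y_{(0)}(1)$ I would exploit the reduction $\inf_n g_n=\bar g_\infty=0$ stated at the top of the section, so that every particle in \eqref{eq:unrank} has non-negative drift. This yields two comparisons on a single probability space: the upper bound $Y_{(0)}(1)\le Y_0(1)\le M+W_0(1)$ with $M\doteq\sup_k g_k<\infty$ (which gives a finite second moment for the positive part), and the lower bound $Y_{(0)}(1)\ge \min_{i\ge 0}\bigl(y_i+W_i(1)\bigr)$ (controlling the negative part). Conditioning on $\ybd$, which is independent of $\{W_i\}$, a union bound together with the Gaussian tail $\bar\Phi(u)\le e^{-u^2/2}$ and the inequality $(x+y_i)^2\ge x^2+y_i^2$ for $x,y_i\ge 0$ gives, for $x>0$,
\begin{equation*}
\PP\!\left(\min_{i}(y_i+W_i(1))<-x\right) \;\le\; e^{-x^2/2}\,\EE\!\left[\sum_{i\ge 0} e^{-y_i^2/2}\right].
\end{equation*}
Since $y_i=\sum_{l=1}^i Z_l(0)$ and $e^{-y_i^2/2}\le e^{-y_i^2/4}$, the integrability condition \eqref{eq:satinteg} makes the expectation finite, and a Gaussian tail integration then yields $\EE Y_{(0)}(1)^2<\infty$. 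Plugging this into \eqref{eq:loctimplan} completes the proof.

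The main obstacle is the bound on $\EE Y_{(0)}(1)^2$: the leftmost ranked particle receives reflection only from above, so its fluctuations to $-\infty$ can only be tamed by comparison with a full infinite family of drifted Brownian motions started from a random, $\pi$-distributed configuration; this is exactly what \eqref{eq:satinteg} is calibrated to control. Everything else in \eqref{eq:loctimplan} is already controlled by Lemma \ref{lem:finmgf} and standard Brownian estimates.
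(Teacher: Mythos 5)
Your proof is essentially the same as the paper's, packaged slightly differently. You derive a closed-form expression for $L^*_i(1)$ by summing the reflected SDE \eqref{order_SDE} over $Y_{(0)},\dots,Y_{(i-1)}$ and telescoping the local times; the paper instead reads off from \eqref{eq:zi} the one-step recursion $L^*_{i+1}(1)=2(Z_i(0)-Z_i(1)+h_i+W_i^*(1)+L^*_i(1))-L^*_{i-1}(1)$, which together with $L^*_1(1)=2(-Y_{(0)}(1)+g_0+B^*_0(1))$ reduces the claim by induction to $\EE(L^*_1(1))^2<\infty$. Unrolling the paper's recursion produces exactly your identity \eqref{eq:loctimplan}, and in both cases the substantive work is (i) finite second moments of $Z_j(0),Z_j(1)$ from Lemma \ref{lem:finmgf}, and (ii) the bound $\EE Y_{(0)}(1)^2<\infty$, obtained from a union bound over the non-interacting comparison system $y_j+W_j(1)$, a Gaussian tail estimate, and the hypothesis \eqref{eq:satinteg}.

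One small caveat worth correcting: you invoke the preamble's normalization $\inf_n g_n=\bar g_\infty=0$ to conclude that every unordered particle in \eqref{eq:unrank} has non-negative drift. In fact $\gbd\in\cld_1$ together with $\bar g_\infty=0$ only forces $\bar g_n>0$ for all $n$, not $g_n\ge 0$ for all $n$; for instance $g_0=2$, $g_1=-1/2$, $g_k=0$ for $k\ge 2$ lies in $\cld_1$ with $\inf_n g_n=-1/2$. The paper states this normalization but never actually uses $g_n\ge 0$ in the proof of this lemma: it works with the uniform lower bound $g_i\ge -g^l$ for a finite $g^l\ge 0$ (which holds since $\gbd\in\cld$ implies the $g_i$ are bounded), so the comparison is $Y_{(0)}(1)\ge\min_j\bigl(y_j+W_j(1)\bigr)-g^l$. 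Replacing your non-negativity claim by this shifted comparison costs nothing in the tail estimate — the shift just absorbs into the constant $e^{-(x-g^l)^2/4}$ — so this is a harmless adjustment rather than a genuine gap.
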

\begin{proof}
    From \eqref{order_SDE} it follows that, for $i \in \NN$, and $0\le t \le 1$,
    \begin{equation}\label{eq:zi}
        Z_i(t) = Z_i(0) + h_it + W_i^*(t) - \frac{1}{2}L^*_{i-1}(t)
        - \frac{1}{2}L^*_{i+1}(t) + L^*_i(t),
    \end{equation}
    where for $i\in \NN$, $h_i = g_i - g_{i-1}$ and $W_i^* = B_i^* - B_{i-1}^*$. 
    This says that
    $$L^*_{i+1}(1) = 2(Z_i(0)-Z_i(1) +h_i + W_i^*(1) + L^*_i(1)) -L^*_{i-1}(1).$$
From this and Lemma \ref{lem:finmgf} it follows that if for some $i\in \NN$
$\EE(L^*_i(1))^2 <\infty$, then
$\EE(L^*_{i+1}(1))^2 <\infty$ as well. Thus it suffices to show that
$\EE(L^*_{1}(1))^2 <\infty$.
From \eqref{order_SDE}, and recalling that $Y_{(0)}(0)=0$, we see that
$$Y_{(0)}(1) = g_0 + B^*_1(1) - \frac{1}{2} L^*_1(1)$$
which says that
$$L^*_1(1) = 2(-Y_{(0)}(1) + g_0 + B^*_1(1)).$$
Thus to prove the lemma it suffices to show that
\begin{equation}\label{eq:dessqint}
    \EE\left( \inf_{j\in \NN_0} Y_j(1)\right)^2<\infty,
\end{equation}
where $\{Y_j\}$ solve the system of equations in \eqref{eq:unrank} with $Y_0(0)=0$
and the vector
$$(Y_1(0), Y_2(0)- Y_1(0), Y_3(0)- Y_2(0), \ldots )$$
distributed as $\pi$.

{\SB Note that for $x > g_0$,
\begin{equation}\label{ygreat}
\PP(\inf_{j\in \NN_0}  Y_j(1) > x) \le \PP(g_0 + B^*_1(1) > x) \le\sqrt{2}e^{-(x-g_0)^2/4}.
\end{equation}}

 Next, as $\gbd \in \cld$, there exists $g^l>0$ such that $g_i \ge -g^l$ for all $i \in \mathbb{N}_0$. Thus, we have, for $x\ge 0$,
\begin{align*}
 \PP(\inf_{j\in \NN_0}  Y_j(1) < -x)
 &\le  \PP(\inf_{j\in \NN_0} (Y_j(0) + W_j(1))
  < g^l -x)\\
  &\le \sum_{j=0}^{\infty}\PP( W_j(1)
  \le g^l -x-Y_j(0)) 
  =  \sum_{j=0}^{\infty} \EE \bar \Phi(x+ Y_j(0)-g^l),
\end{align*}
where $\bar \Phi(z)$ was defined in \eqref{eq:gauscdf}.
Thus, for $x \ge g^l$,
\begin{align*}
 \PP(\inf_{j\in \NN_0} Y_j(1) \le -x)
 &\le \sqrt{2} \sum_{j=0}^{\infty} \EE e^{-(x+ Y_j(0) - g^l)^2/4}
 \le \sqrt{2}e^{-(x-g^l)^2/4} \sum_{j=0}^{\infty} 
 \EE e^{-(Y_j(0))^2/4}.
\end{align*}
The desired square integrability in \eqref{eq:dessqint} is now immediate {\SB from \eqref{ygreat} and the above} on observing that
$$\sum_{j=0}^{\infty} 
 \EE e^{-(Y_j(0))^2/4}
 = 1 + \int_{\RR_+^{\infty}} \left(\sum_{j=1}^{\infty}
        e^{-\frac{1}{4} (\sum_{l=1}^j z_l)^2} \right) \pi(d\zbd) <
        \infty$$
        by our assumption.
\end{proof}
The next lemma will allow us to interchange expectations and limits as $\eps \to 0$.
\begin{lemma}\label{lemunifinteg}
    The family 
    $\{\frac{1}{\eps} \int_0^1 1_{\{0 \le Z_i(s) \le \eps\}} ds, \; 
    \eps \in (0, 1/2)\}$ is uniformly integrable.
\end{lemma}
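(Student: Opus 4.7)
The plan is to prove the stronger statement that $\sup_{\eps \in (0,1/2)} \EE X_\eps^2 < \infty$, where $X_\eps \doteq \frac{1}{\eps}\int_0^1 \mathbf{1}_{\{0 \le Z_i(s) \le \eps\}}\,ds$, since $L^2$-boundedness implies uniform integrability. The starting point will be the occupation time formula for the continuous semimartingale $Z_i$. From the dynamics \eqref{eq:zi}, together with the fact that $W_i^* = B_i^* - B_{i-1}^*$ has quadratic variation $2t$ and that all other terms in \eqref{eq:zi} are of bounded variation, one has $\langle Z_i\rangle_t = 2t$. Writing $L^{Z_i}_u(\cdot)$ for the semimartingale local time of $Z_i$ at level $u$, the occupation time formula then gives $\int_0^1 \mathbf{1}_{\{0 \le Z_i(s) \le \eps\}}\,ds = \frac{1}{2}\int_0^\eps L^{Z_i}_u(1)\,du$, so that $X_\eps = (2\eps)^{-1}\int_0^\eps L^{Z_i}_u(1)\,du$. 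Jensen's inequality in the form $(a^{-1}\int_0^a f)^2 \le a^{-1}\int_0^a f^2$ then yields $X_\eps^2 \le (4\eps)^{-1}\int_0^\eps L^{Z_i}_u(1)^2\,du$, which reduces the problem to a bound on $\EE L^{Z_i}_u(1)^2$ uniform in $u \in [0,1/2]$.

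To obtain such a bound, I would invoke Tanaka's formula applied to the convex function $x\mapsto (x-u)^+$, which gives
\[
\tfrac{1}{2}L^{Z_i}_u(1) = (Z_i(1)-u)^+ - (Z_i(0)-u)^+ - \int_0^1 \mathbf{1}_{\{Z_i(s) > u\}}\,dZ_i(s).
\]
Plugging \eqref{eq:zi} into the stochastic integral, the $dL^*_i(s)$-term drops out because $L^*_i$ is supported on $\{Z_i = 0\}$ and therefore does not charge $\{Z_i > u\}$ for any $u \ge 0$; this is the one place where the reflected structure of $Z_i$ enters. What remains is a drift of absolute value at most $|h_i|$, a martingale $N_u(t) \doteq \int_0^t \mathbf{1}_{\{Z_i(s)>u\}}\,dW_i^*(s)$, and two nondecreasing contributions bounded respectively by $\frac{1}{2}L^*_{i-1}(1)$ and $\frac{1}{2}L^*_{i+1}(1)$. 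Combined with $(Z_i(t)-u)^+\le Z_i(t)$, this produces the uniform-in-$u$ deterministic bound
\[
L^{Z_i}_u(1) \le 2\bigl[Z_i(0) + Z_i(1) + |h_i| + \tfrac{1}{2}L^*_{i-1}(1) + \tfrac{1}{2}L^*_{i+1}(1) + |N_u(1)|\bigr].
\]

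Squaring and taking expectations, each term on the right will be controlled by a constant independent of $u$. Stationarity of $\Zbd$ under $\pi$ gives $\EE Z_i(0)^2 = \EE Z_i(1)^2$, which is finite by Lemma \ref{lem:finmgf} (exponential moments of $\pi_i$); the bounds $\EE L^*_{i\pm 1}(1)^2 < \infty$ are furnished by Lemma \ref{lem:sqloctim}; and It\^o isometry gives $\EE N_u(1)^2 = 2\,\EE\!\int_0^1 \mathbf{1}_{\{Z_i(s) > u\}}\,ds \le 2$ uniformly in $u$. Consequently $\sup_{u \ge 0} \EE L^{Z_i}_u(1)^2 < \infty$, and feeding this into the Jensen estimate yields $\sup_\eps \EE X_\eps^2 < \infty$, which gives the required uniform integrability. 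The main obstacle is arranging the Tanaka expansion so that the only unbounded terms are second-moment quantities that the two preceding lemmas already control; after one observes that $\mathbf{1}_{\{Z_i > u\}}\,dL^*_i(s) \equiv 0$, everything else is a straightforward collection of standard estimates.
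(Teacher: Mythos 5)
Your proof is correct, and it arrives at the result by a genuinely different route from the paper's. The paper's argument applies It\^o's formula directly to $\psi_\eps(Z_i)$, where $\psi_\eps$ is a $C^1$ approximation (with absolutely continuous derivative) of $z\mapsto \eps z\wedge z^2/2$ chosen so that $\psi_\eps''=\mathbf{1}_{[0,\eps)}$, $0\le \psi_\eps'\le\eps$, and $\psi_\eps'(0)=0$; after dividing by $\eps$, the local-time terms $dL_{i\pm 1}^*$ and the drift are handled by $|\psi'_\eps|\le\eps$, the $dL_i^*$ term vanishes because $\psi'_\eps(0)=0$, and the martingale term has variance at most $2$ because $|\psi_\eps'/\eps|\le 1$ and $\langle W_i^*\rangle_1=2$. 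You instead pass through the occupation time formula to write $X_\eps=(2\eps)^{-1}\int_0^\eps L_u^{Z_i}(1)\,du$, apply Jensen, and reduce to the uniform-in-level bound $\sup_{u\ge 0}\EE L^{Z_i}_u(1)^2<\infty$, which you obtain from Tanaka's formula applied at level $u$, the observation that $\mathbf{1}_{\{Z_i>u\}}dL_i^*\equiv 0$ for $u\ge 0$, and It\^o isometry for the resulting martingale. In both arguments the key role of $\psi'_\eps(0)=0$ (paper) and of $\mathbf{1}_{\{Z_i>u\}}dL_i^*\equiv 0$ (yours) is the same: the local time of $Z_i$ at $0$ itself must drop out, since it is precisely the quantity being approximated. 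Both proofs ultimately establish the stronger claim $\sup_{\eps}\EE X_\eps^2<\infty$ and both rest on the same two input lemmas (exponential moments of $\pi_i$ and $\EE L_{i\pm 1}^*(1)^2<\infty$). Your version has the modest advantage of producing a clean, level-uniform bound on $\EE L_u^{Z_i}(1)^2$ as a byproduct, at the cost of invoking the occupation-time formula and the Jensen step; the paper's version stays closer to the elementary It\^o computation that appears again in the proof of Lemma \ref{lem:lociden}, which keeps the section more self-contained.

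Two small technical points worth making explicit in a final write-up: (i) the occupation-time identity requires $\langle Z_i\rangle_t=2t$, which follows from $W_i^*=B_i^*-B_{i-1}^*$ being the difference of two independent standard Brownian motions with the remaining contributions in \eqref{eq:zi} being of finite variation; you state this, and it is correct. (ii) The Tanaka sign convention you use, $\tfrac12 L^u_t=(Z_i(t)-u)^+-(Z_i(0)-u)^+-\int_0^t\mathbf{1}_{\{Z_i>u\}}\,dZ_i$, is the Revuz--Yor one that the paper also cites, so the factor of $2$ bookkeeping is consistent throughout.
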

\begin{proof}
    For $\eps \in (0, 1/2)$, define $\psi_{\eps}: \RR \to \RR$ as
    \begin{equation}
        \psi_{\eps}(z) \doteq
        \begin{cases}
            \frac{z^2}{2} & \mbox{ if } 0 \le z \le \eps\\
            \frac{\eps^2}{2} + (z-\eps)\eps & \mbox{ if } z > \eps.
        \end{cases}
    \end{equation}
    Note that $\psi_{\eps}$ is a $C^1$ function with an absolutely continuous derivative. It then follows from It\^{o}'s formula
    (see \cite[Problem 3.7.3]{KSbook}) applied to $Z_i$ given by 
    \eqref{eq:zi}, that
    \begin{align}
      \psi_{\eps}(Z_i(1)) &=  \psi_{\eps}(Z_i(0)) 
      + \int_0^1 \psi'_{\eps}(Z_i(s)) h_i ds +
      \int_0^1 \psi'_{\eps}(Z_i(s)) dW^*_i(s)\nonumber\\
      &\quad-\frac{1}{2} \int_0^1 \psi'_{\eps}(Z_i(s)) dL^*_{i+1}(s)
      -\frac{1}{2} \int_0^1 \psi'_{\eps}(Z_i(s)) dL^*_{i-1}(s)\nonumber\\
      &\quad+\int_0^1 \psi'_{\eps}(Z_i(s)) dL^*_{i}(s)
      + \int_0^1 \psi''_{\eps}(Z_i(s)) ds.\label{eq:1001}
\end{align}
Note that, for all $z \in \RR_+$,
$$0\le \psi_{\eps}(z) \le z\eps, \;\; 0\le \psi'_{\eps}(z) \le \eps, \;\; \psi'_{\eps}(0)=0.$$
Also,
$$\psi''_{\eps}(z) = \begin{cases}
    1 & \mbox{ if } 0 \le z < \eps\\
    0 & \mbox{ if } z > \eps.
\end{cases}
$$
Combining these, and dividing by $\eps$ in \eqref{eq:1001}, we have
\begin{align*}
\frac{1}{\eps} \int_0^1 1_{\{0 \le Z_i(s) \le \eps\}} ds
&\le Z_i(1) - \frac{1}{\eps} \int_0^1 \psi'_{\eps}(Z_i(s)) dW^*_i(s) + |h_i| + \frac{1}{2} L^*_{i-1}(1) + \frac{1}{2} L^*_{i+1}(1).
\end{align*}
The desired uniform integrability now follows from Lemmas \ref{lem:finmgf} and \ref{lem:sqloctim} and the observation that
$$\EE\left(\frac{1}{\eps} \int_0^1 \psi'_{\eps}(Z_i(s)) dW^*_i(s)\right)^2
\le 2.$$
\end{proof}
 The following lemma will be key to proving Theorem \ref{thm_main2}. It will be used to represent expectations of integrals of nonnegative measurable functions with respect to local time in terms of stationary integrals and the `density' of $\pi_i$ at zero for each $i$, as described in the lemma. We note that the product form structure of $\pi$ is crucially exploited here.
\begin{lemma}
	\label{lem:lociden}
For any $i\in \NN$, the limit $\nu_i \doteq \lim_{\eps \downarrow 0}
\frac{1}{\eps}\pi_i[0,\eps]$ exists and $\nu_i = \EE(L^*_i(1))$.
Furthermore, for any measurable $f: \RR_+ \to \RR_+$ and $i, j \in \NN$, $i\neq j$,
\begin{equation}\label{eq:854n}
    \EE \int_0^1 f(Z_i(s)) dL^*_j(s) = \nu_j \int_{\RR_+} f(z) \pi_i(dz).
\end{equation}

\end{lemma}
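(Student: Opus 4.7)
The argument proceeds in two stages, linked by identifying $L^*_i$ as (a scalar multiple of) the semimartingale local time of $Z_i$ at $0$. Applying the Meyer--Tanaka formula to $Z_i = Z_i^+$ and comparing with \eqref{eq:zi}, while using absence of triple collisions so that $dL^*_{i\pm 1}$ does not charge $\{s : Z_i(s)=0\}$, one finds $L^*_i = \tfrac12 \Lambda^0(Z_i)$ where $\Lambda^0(Z_i)$ denotes the right semimartingale local time of $Z_i$ at $0$. Since $\langle Z_i\rangle_s = 2s$, this gives the occupation-time representation
\[
L^*_i(t) \;=\; \lim_{\eps \downarrow 0}\frac{1}{\eps}\int_0^t 1_{\{0 \le Z_i(s)\le \eps\}}\,ds,
\]
with convergence a.s.\ by classical local-time theory and, crucially, in $L^1$ by Lemma \ref{lemunifinteg}. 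Taking expectation at $t=1$ and using stationarity, so that $\EE\,1_{\{0\le Z_i(s)\le \eps\}} = \pi_i[0,\eps]$ for every $s$, yields $\EE L^*_i(1) = \lim_{\eps\downarrow 0} \pi_i[0,\eps]/\eps$, establishing both the existence of $\nu_i$ and the identity $\nu_i = \EE L^*_i(1)$.

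For \eqref{eq:854n}, fix $i\neq j$ and first take $f\ge 0$ bounded continuous. The same occupation-time approximation gives, almost surely, convergence of the random measures $\eps^{-1}1_{\{0\le Z_j(\cdot)\le \eps\}}\,ds$ to $dL^*_j(\cdot)$ on $[0,1]$; since $s\mapsto f(Z_i(s))$ is continuous,
\[
\int_0^1 f(Z_i(s))\,dL^*_j(s) \;=\; \lim_{\eps \downarrow 0}\frac{1}{\eps}\int_0^1 f(Z_i(s))\,1_{\{0\le Z_j(s)\le \eps\}}\,ds,
\]
with $L^1$ convergence furnished by applying Lemma \ref{lemunifinteg} to the $j$-th gap. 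Taking expectations and using Fubini,
\[
\EE\int_0^1 f(Z_i(s))\,dL^*_j(s) \;=\; \lim_{\eps \downarrow 0}\frac{1}{\eps}\int_0^1 \EE\!\left[f(Z_i(s))\,1_{\{0\le Z_j(s)\le \eps\}}\right]ds.
\]
At this step I invoke the product form of $\pi$: since $\pi = \otimes_k \pi_k$ is stationary, for each fixed $s$ the random variables $Z_i(s)$ and $Z_j(s)$ are independent with marginals $\pi_i$ and $\pi_j$, so the inner expectation factors as $\pi_j[0,\eps]\int f\,d\pi_i$. Substituting and sending $\eps\downarrow 0$ delivers \eqref{eq:854n} for bounded continuous $f\ge 0$; extension to bounded measurable $f$ is a monotone-class argument (both sides are linear in $f$ and continuous under bounded pointwise limits), and to general $f\ge 0$ then follows by monotone convergence.

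The main obstacle I anticipate is the justification of the Lebesgue--Stieltjes approximation of $dL^*_j$ by $\eps^{-1}1_{\{0\le Z_j\le \eps\}}\,ds$ inside the expectation: a.s.\ convergence as measures is classical, but exchanging the limit with $\EE$ and the time integral requires the $L^1$ control supplied by Lemma \ref{lemunifinteg}. A related conceptual subtlety worth flagging is that the product form of $\pi$ only gives independence of the marginals $\{Z_k(s)\}_k$ at each fixed $s$, not path-level independence of $Z_i(\cdot)$ and $L^*_j(\cdot)$; the proof sidesteps this by first reducing, via the occupation-time approximation, to a time-$s$ pointwise expectation, after which the time-$s$ independence is precisely what is needed.
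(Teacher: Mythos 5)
Your proposal is correct and follows essentially the same route as the paper's proof: identify $L^*_i$ with $\tfrac12$ times the semimartingale local time of $Z_i$ at $0$ via the Tanaka formula and absence of triple collisions, invoke the occupation-time formula (with $\langle Z_i\rangle_s = 2s$, which you helpfully make explicit), upgrade a.s.\ convergence to $L^1$ convergence via the uniform integrability of Lemma \ref{lemunifinteg}, reduce to bounded continuous $f$ and use the a.s.\ weak convergence of the occupation measures $\eps^{-1}1_{\{0\le Z_j\le\eps\}}\,ds$ to $dL^*_j$, and finally exploit the product-form stationary law at each fixed $s$ after the time-integral has been brought inside the expectation. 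The one step you gloss over that the paper handles carefully is the upgrade from a.s.\ convergence at each fixed $t$ to a.s.\ weak convergence of the random measures (the paper works on a full-measure event where convergence holds simultaneously for all rational $t$ and then uses tightness/continuity), but this is a routine technicality and does not alter the substance of the argument.
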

\begin{proof}
From results on local times of continuous semimartingales (see e.g. \cite[Corollary VI.1.9]{RY}) it follows that, for all $t \in [0,1]$ and $i \in \NN$,
$\frac{1}{\eps} \int_0^t 1_{\{0 \le Z_i(s) \le \eps\}} ds$ converges a.s. to $1/2$-times the semimartingale local time $\Lambda_i(t)$ of $Z_i$ at $0$ (as defined in \cite[VI.1.2]{RY}) as $\eps \downarrow 0$. Furthermore, one has (see \cite[Exercise VI.1.16 (3)]{RY}) that
\begin{align*}\Lambda_i(t) &= 2 \Big[h_i\int_0^t 1_{\{Z_i(s)=0\}} ds
- \frac{1}{2} \int_0^t 1_{\{Z_i(s)=0\}} d L^*_{i-1}(s)\\
&\quad- \frac{1}{2} \int_0^t 1_{\{Z_i(s)=0\}} d L^*_{i+1}(s)
+\int_0^t 1_{\{Z_i(s)=0\}} d L^*_{i}(s)\Big]\\
&= 2\int_0^t 1_{\{Z_i(s)=0\}} d L^*_{i}(s) = 2L^*_{i}(t),
\end{align*}
where the first equality on the last line follows from   the facts that $\int_0^t 1_{\{Z_i(s)=0\}} ds=0$ (which follows from $\Lambda_i(t) < \infty$), and that the Atlas model does not have triple collisions a.s. (see \cite[Theorem 5.1]{AS}). It then follows that, as $\eps \downarrow 0$,
\begin{equation}\label{eq:548n}\frac{1}{\eps} \int_0^t 1_{\{0 \le Z_i(s) \le \eps\}} ds 
\to L^*_{i}(t), \mbox{ a.s. for every } t \in [0,1] \mbox{ and } i \in \NN.\end{equation}
Combining this with Lemma \ref{lemunifinteg} and using the fact that
$\Zbd$ is a stationary process, we now have that, as $\eps \downarrow 0$,
\begin{equation} \label{eq:602n}\frac{1}{\eps}\pi_i[0,\eps]
=\EE\frac{1}{\eps} \int_0^1 1_{\{0 \le Z_i(s) \le \eps\}} ds \to 
\EE(L^*_{i}(1)), \mbox{ for all } i \in \NN.\end{equation}
This proves the first statement in the lemma.
In order to prove \eqref{eq:854n} it suffices to consider the case where $f$ is bounded (as the general case can be then recovered by monotone convergence theorem). In fact by appealing to the monotone class theorem (cf. \cite[Theorem I.8]{prott}) we can assume without loss of generality  that $f$ is a continuous and bounded function.
From \eqref{eq:548n} we can find $\Om_0 \in \clf$ such that $\PP(\Om_0)=1$ and for all $\om \in \Om_0$
\begin{equation}\label{eq:548nn}\frac{1}{\eps} \int_0^t 1_{\{0 \le Z_i(s,\om) \le \eps\}} ds 
\to L^*_{i}(t, \om), \mbox{ for every } t \in [0,1] \cap \QQ, \end{equation}
and $L^*_i(1,\om) <\infty$ for all $i \in \NN$.
In particular this says that, for every $\om \in \Om_0$ and $i\in \NN$, the collection of measures $\{\Pi^{\eps, \om}_i, \eps \in (0, 1/2)\}$ defined as
$$\Pi^{\eps, \om}_i[a,b] \doteq \frac{1}{\eps} \int_a^b 1_{\{0 \le Z_i(s,\om) \le \eps\}} ds, \; 0 \le a \le b \le 1$$
is relatively compact in the weak convergence topology. From this and using \eqref{eq:548nn} again we now see that, for every $\om \in \Om_0$ and $i\in \NN$, $\Pi^{\eps, \om}_i$ converges weakly to $\Pi^{\om}_i$ defined as
$$\Pi^{\om}_i[a,b] \doteq L^*_i(b, \om) - L^*_i(a, \om), \; 0 \le a \le b \le 1.$$
From the sample path continuity of $Z_i$ we can assume without loss of generality that for every $\om \in \Om_0$ and $i\in \NN$, $s \mapsto f(Z_i(s,\om))$ is a continuous  map.
From the above weak convergence it then follows that, for $\om \in \Om_0$ and $i,j\in \NN$,
\begin{align*}
	\int_0^1 f(Z_i(s,\om)) dL^*_j(s,\om) &= \int_0^1 f(Z_i(s,\om)) d\Pi^{\om}_j(s)\\
	&= \lim_{\eps \to 0}  \int_0^1 f(Z_i(s,\om)) d\Pi^{\eps, \om}_j(s)
	= \lim_{\eps \to 0}  \frac{1}{\eps}\int_0^1 f(Z_i(s,\om)) 1_{\{0 \le Z_j(s,\om) \le \eps\}} ds.
\end{align*}
Using Lemma \ref{lemunifinteg}  and the fact that $\Zbd$ is a stationary process with a product form stationary distribution, we now see that for $i,j\in \NN$, $i\neq  j$,
\begin{align*}
	\EE\int_0^1 f(Z_i(s)) dL^*_j(s) &= \lim_{\eps \to 0}  \frac{1}{\eps}\EE\int_0^1 f(Z_i(s)) 1_{\{0 \le Z_j(s) \le \eps\}} ds\\
&=\lim_{\eps \to 0}  \frac{1}{\eps}\pi_j[0,\eps] \int f(z) \pi_i(dz) = 
	 \nu_j \int f(z) \pi_i(dz).\end{align*}

\end{proof}

\subsection{Proof of Theorem \ref{thm_main2}}
We now complete the proof of the theorem.
Recalling Lemmas \ref{lem:finmgf} and \ref{lem:sqloctim}),  taking expectations in \eqref{eq:zi}, and using the identity $\nu_i = \EE(L^*_i(1))$ for $i \in \NN$, we see that, for all $i \in \NN$,
\begin{equation}\label{eq:nuheq}
	h_i +\nu_i -\frac{1}{2} \nu_{i+1} - \frac{1}{2} \nu_{i-1} = 0.
\end{equation}
Applying the above identity for $i=1$, and setting $a \doteq \nu_1-2g_0$, we have
$$
\nu_2 = 2\nu_1 + 2h_1 = 2(g_0+g_1) + 2(\nu_1-2g_0) = 2(g_0+g_1) + 2a.$$
Proceeding by induction, suppose that for some $k \ge 2$,
\begin{equation}
\nu_i = ia + 2(g_0+ \cdots + g_{i-1}), \mbox{ for all } 1 \le i \le k.	
\end{equation}
Then from \eqref{eq:nuheq},
\begin{align*}
\nu_{k+1} &= 2(\nu_k - \frac{1}{2} \nu_{k-1} + h_k)	\\
&= 2 ak + 4(g_0+ \cdots + g_{k-1}) - a(k-1) - 2(g_0+ \cdots + g_{k-2}) + 2(g_k- g_{k-1})\\
&= (k+1) a + 2(g_0+ \cdots + g_{k}).
\end{align*}
Thus it follows that, for every $k\ge 1$,
\begin{equation}\label{eq:nukrec}\nu_k = ka + 2(g_0+ \cdots + g_{k-1}) = k(a+ 2 \bar g_k).\end{equation}
Fix  $i \in \NN$ and $\lambda < \sum_{k=0}^{i-1}g_k$. Then by It\^{o}'s formula applied to $Z_i$ given by \eqref{eq:zi},
    \begin{align}\label{itoexp}
        e^{\la Z_i(1)} &= e^{\la Z_i(0)}  + \la h_i \int_0^1 e^{\la Z_i(s)} ds  + \la \int_0^1 e^{\la Z_i(s)} dW_i^*(s) - \frac{\la }{2} \int_0^1 e^{\la Z_i(s)} dL^*_{i-1}(s)\notag\\
        &\quad- \frac{\la }{2}\int_0^1 e^{\la Z_i(s)} dL^*_{i+1}(s) + \la \int_0^1 e^{\la Z_i(s)} dL^*_i(s)
		+ \la^2 \int_0^1 e^{\la Z_i(s)} ds.
    \end{align}
	Since $\lambda < \sum_{k=0}^{i-1}g_k$, from Lemma \ref{lem:finmgf},
	$\int_0^1 \EE e^{2\la Z_i(s)} ds = \int_{\RR_+} e^{2\la z} \pi_i(dz) <\infty$ and consequently the  stochastic integral in the above display has mean $0$. {\SB Moreover, note that
	$$
	\EE\int_0^1 e^{\la Z_i(s)} dL^*_i(s) = \EE\int_0^1 dL^*_i(s) = \EE(L^*_i(1)) = \nu_i.
	$$
	Thus taking expectations in \eqref{itoexp}} and using Lemma \ref{lem:lociden}, we have,
\begin{align*}
	\int_{\RR_+} e^{\la z} \pi_i(dz) &=\int_{\RR_+} e^{\la z} \pi_i(dz) + \la h_i \int_{\RR_+} e^{\la z} \pi_i(dz) - \frac{\la }{2} \nu_{i-1}\int_{\RR_+} e^{\la z} \pi_i(dz)\\
	&\quad - \frac{\la }{2} \nu_{i+1}\int_{\RR_+} e^{\la z} \pi_i(dz) + \la \nu_i +\la^2\int_{\RR_+} e^{\la z} \pi_i(dz).
\end{align*}
Rearranging terms, we have
$$
\la \nu_i = \left(-\la h_i +  \frac{\la }{2} \nu_{i-1} + \frac{\la }{2} \nu_{i+1} - \la^2\right)
\int_{\RR_+} e^{\la z} \pi_i(dz)  = (\la \nu_i - \la^2) \int_{\RR_+} e^{\la z} \pi_i(dz),$$
where the second equality in the above display follows from \eqref{eq:nuheq}.
Thus we have shown that for all $i \in \NN$ and $\lambda < \sum_{k=0}^{i-1}g_k$,
$$\int_{\RR_+} e^{\la z} \pi_i(dz) = \frac{\nu_i}{\nu_i-\la}.$$
Thus, since $\sum_{k=0}^{i-1}g_k = i\bar g_i>0$, by uniqueness of Laplace transforms, we must have that $\pi_i = \mbox{Exp}(\nu_i)$ for each $i \in \NN$. 
Finally note that, since by Lemma  \ref{lem:finmgf},
	$\int_{\RR_+} e^{\la z} \pi_1(dz) <\infty$ for all $\la < 2 g_0$, we must have that 
	$a = \nu_1-2g_0 \ge 0$.
Thus, in view of \eqref{eq:nukrec}, we have shown that, for some $a\ge 0$, $\pi_i = \mbox{Exp}(\nu_i) = \mbox{Exp}(i(2\bar g_i + a))$ for all $i \in \NN$ and so
$\pi = \pi_a^{\gbd}$ for some $a\ge 0$.

{\SB The assertion \eqref{arep} follows from \eqref{eq:nukrec} upon recalling that $\nu_k = \EE(L^*_k(1))$ for $k \in \NN$.}
\hfill \qed

\appendix
\section{Proof of Lemma \ref{lemerg}}
We will like to acknowledge the lecture notes of Sethuraman \cite{SethLN} which are used at several steps in the proof below.

Since $\gbd \in \cld$ will be fixed in the proof we suppress it from the notation. 
Let $\gamma \in \cli$. 
 
 Define
 $$\GG \doteq \{\eta - T_t \eta: \eta \in L^2(\gamma), \, t\ge 0\}.$$
 We claim that 
  $[(\mbox{span}(\GG))^{cl}]^\perp \subset \II_{\gamma}$, where $[(\mbox{span}(\GG))^{cl}]^\perp$ denotes the orthogonal complement of the closure of the linear subspace generated by  $\GG$ in  $L^2(\gamma)$.
 Indeed, if $\psi \in [(\mbox{span}(\GG))^{cl}]^{\perp}$, then for all $\eta \in L^2(\gamma)$ and $t\ge 0$,
 $\lan \psi, \eta - T_t\eta\ran =0$. Taking $\eta = \psi$, 
 \begin{equation}\label{eq:ap1036}
	 \lan \psi, \psi\ran = \lan \psi, T_t\psi\ran \mbox{ for all } t \ge 0.\end{equation}
 Using this, and the contraction property of $T_t$,  for all $t \ge 0$,
 \begin{align*}
	 0 &\le \lan T_t\psi - \psi, T_t\psi -\psi\ran = \lan T_t\psi , T_t\psi\ran + \lan \psi, \psi \ran - 2 \lan \psi, T_t \psi\ran\\
	 & = \lan T_t\psi , T_t\psi\ran + \lan \psi, \psi \ran - 2 \lan \psi,  \psi\ran
 =\lan T_t\psi , T_t\psi\ran - \lan \psi,  \psi\ran \le 0,
\end{align*}
where the first equality on the second line follows from \eqref{eq:ap1036}.
 This says that $ T_t\psi =\psi$ (as elements of $L^2(\gamma)$) for all $t\ge 0$ and so
 $\psi \in \II_{\gamma}$ and shows the claim $[(\mbox{span}(\GG))^{cl}]^{\perp} \subset \II_{\gamma}$. 
  Any $\psi\in L^2(\gamma)$ can be written as $\psi = \hat \psi_{\gamma} + \tilde \psi_{\gamma}$ where $\hat \psi_{\gamma} \in  [(\mbox{span}(\GG))^{cl}]^{\perp} \subset \II_{\gamma}$
 and $\tilde \psi_{\gamma} \in (\mbox{span}(\GG))^{cl}$.
 
 Next, for $\psi \in L^2(\gamma)$ and $t>0$,  define $A_t\psi \in L^2(\gamma)$ as
 $$A_t\psi \doteq \frac{1}{t} \int_0^t T_s \psi \,  ds.$$
 Then $A_t \psi = A_t \hat \psi_{\gamma} + A_t \tilde \psi_{\gamma}$.  By definition $A_t \hat \psi_{\gamma} = \hat\psi_{\gamma}$.
 Also, if $\phi \in \GG$ then for some $t_0\ge 0$ and $\eta \in L^2(\gamma)$
 $\phi = \eta - T_{t_0}\eta$.
 Then from the contraction property of $T_t$ it follows that
 $$\|A_t\phi\| \le \frac{2\|\eta\| t_0}{t} \to 0 \mbox{ as } t \to \infty.$$
 Similarly, if $\phi \in \mbox{span}(\GG)$, there is a $c(\phi) \in (0,\infty)$ such that
 $$\|A_t\phi\| \le \frac{c(\phi)}{t} \to 0 \mbox{ as } t \to \infty.$$
 Finally, let $\phi \in (\mbox{span}(\GG))^{cl}$. Then, given $\veps>0$, there is a $\phi^{\veps} \in \mbox{span}(\GG)$ such that $\|\phi - \phi^{\veps}\| \le \veps$. It follows, using again the contraction property, that for all $t>0$,
 $$\|A_t\phi\| \le \|A_t(\phi - \phi^{\veps})\| + \|A_t\phi^{\veps}\| \le \|\phi - \phi^{\veps}\| + \frac{c(\phi^{\veps})}{t} \le \veps + \frac{c(\phi^{\veps})}{t} .$$
 Thus
 $\limsup_{t\to \infty} \|A_t\phi\| \le \veps$. Since $\veps>0$ is arbitrary, we obtain 
 \begin{equation} \label{eq:ap1038}
	 \lim_{t\to \infty} \|A_t\phi\| = 0 \mbox{ for all } \phi \in (\mbox{span}(\GG))^{cl}.
\end{equation}
 From these observations we obtain, for any {\SB$\psi\in L^2(\gamma)$},
 \begin{equation}\label{eq:334}
	 \lim_{t \to \infty} A_t \psi = \lim_{t \to \infty} (A_t \hat \psi_{\gamma} +  A_t \tilde \psi_{\gamma}) = \lim_{t \to \infty} (\hat \psi_{\gamma} +  A_t \tilde \psi_{\gamma}) = \hat \psi_{\gamma}.
 \end{equation}
The above convergence in fact shows that $[(\mbox{span}(\GG))^{cl}]^{\perp} = \II_{\gamma}$  and consequently $\hat \psi_{\gamma}$ is the projection of $\psi$ on to  $\II_{\gamma}$.
To see this, recall that it was argued above that $[(\mbox{span}(\GG))^{cl}]^{\perp} \subset \II_{\gamma}$.
Now consider the reverse inclusion and
let  $ \varphi \in \II_{\gamma}$. Then we can write $\varphi = \varphi_1 +\varphi_2$ where $\varphi_1 \in (\mbox{span}(\GG))^{cl}$ and $\varphi_2 \in [(\mbox{span}(\GG))^{cl}]^{\perp} \subset \II_{\gamma}$. Thus, for $t\ge 0$,
$$\varphi = A_t \varphi = A_t \varphi_1 + A_t\varphi_2 = A_t \varphi_1 + \varphi_2.$$
As $t \to \infty$, we have from \eqref{eq:ap1038} that,  $A_t \varphi_1 \to 0$, which says that $\varphi = \varphi_2$. This proves the inclusion $\II_{\gamma} \subset [(\mbox{span}(\GG))^{cl}]^{\perp}$ and we have the claimed statement $[(\mbox{span}(\GG))^{cl}]^{\perp} = \II_{\gamma}$.

 Now we proceed to the proof of the statements in the lemma. We first consider the second statement in the lemma. Fix $\gamma \in \cli$. Suppose that for every bounded measurable map $\psi: \RR_+^{\infty} \to \RR$, $\hat{\psi}_{\gamma}$ is constant $\gamma$ a.s. 
 We will now show that this implies $\gamma \in \cli_e$. Suppose there is a $\veps \in (0,1)$ and $\gamma_1, \gamma_2 \in \cli$ such that
 $\gamma = \veps \gamma_1 + (1-\veps)\gamma_2$. Note that  since from \eqref{eq:334} $\hat{\psi}_{\gamma} = \lim_{t \to \infty} A_t \psi $ and $\gamma$ is invariant, we must have
 $\hat{\psi}_{\gamma} = \int_{\RR_+^{\infty}} \psi\, d\gamma$, and so from \eqref{eq:334} it follows that
 \begin{equation}
	 \int_{\RR_+^{\infty}} \left(A_t\psi - \int \psi d\gamma\right)^2 d\gamma = \|A_t\psi-\hat{\psi}_{\gamma}\|^2 \to 0 \mbox{ as } t \to \infty.
 \end{equation}
 Also, from definition,
 \begin{equation}
	 \limsup_{t\to \infty }\int_{\RR_+^{\infty}} \left(A_t\psi - \int \psi d\gamma\right)^2 d\gamma_1 \le
	  \limsup_{t\to \infty }\veps^{-1} \int_{\RR_+^{\infty}} \left(A_t\psi - \int \psi d\gamma\right)^2 d\gamma = 0.
 \end{equation}
 Thus $A_t\psi \to \int \psi d\gamma$ in $L^2(\gamma_1)$. 
 Also, since $\gamma_1 \in \cli$, $\int A_t \psi d \gamma_1 = \int \psi d\gamma_1$
 %
%
and consequently, 
 $\int \psi d\gamma_1 = \int \psi d\gamma$. Since $\psi$ is an arbitrary bounded measurable function, we must have $\gamma = \gamma_1$. This proves that $\gamma \in\cli_e$.
 We have thus shown the second statement in the lemma and in fact also shown that $\cli_{er} \subset \cli_e$.
 
 Finally we argue that $\cli_e \subset \cli_{er}$. Suppose $\gamma \in \cli_e$ and that $\gamma \not \in \cli_{er}$. Then  there is a $\psi \in L^2(\gamma)$ such that
 $\hat \psi_{\gamma}$ is not a.s. constant under $\gamma$. Thus there is a $c \in \RR$ such that, with $A = \{\hat \psi_{\gamma} >c\}$, $\gamma(A) \doteq \veps \in (0,1)$.
 Note that by definition 
 \begin{equation}\label{eq:1100}
	 T_t \hat \psi_{\gamma} = \hat \psi_{\gamma},\; \gamma  \mbox{ a.s. } \mbox{ for all } t \ge 0.
\end{equation} 
We refer to this property as $\hat \psi_{\gamma}$ is {\em harmonic} (with respect to the semigroup $\{T_t\}$).
 We claim that this implies that $1_A$ is harmonic as well, namely
 \begin{equation}\label{eq:114}
 	T_t 1_A = 1_A , \, \gamma \mbox{ a.s. } \mbox{ for all } t \ge 0.
 \end{equation}
 To see this, note that, from the definition of $T_t$, if $f \in L^2(\gamma)$ is harmonic, then
 \begin{equation}\label{eq:ap1046}
	 |f| = |T_t f | \le T_t|f|, \; \gamma \mbox{ a.s. } \end{equation}
 This,  together with the fact that $T_t$ is a contraction, says that
 $$\|f\| 
 \le \|T_t|f|\| \le \|f\|$$
 which in view of \eqref{eq:ap1046} shows that 
 $|f|$ is harmonic. From the linearity of $T_t$ it then follows that $f\vee 0 = \frac{1}{2} (f +|f|)$ is harmonic as well. This implies that if $f_1, f_2$ are harmonic then $f_1 \vee f_2$ and $f_1 \wedge f_2$ are harmonic as well. 
 Recalling tha $\hat \psi_{\gamma}$ is harmonic, we now have that $g_n \doteq \left(n\left(\hat\psi_{\gamma}-c\right)^+ \wedge 1\right)$ is harmonic for every $n \in \mathbb{N}$. The property in \eqref{eq:114} is now immediate from this on observing that $g_n \to 1_A$ a.s. and dominated convergence theorem. This proves the claim.
 
 Consider the probability measures $\gamma_1, \gamma_2$ on $(\RR_+^{\infty}, \clb(\RR_+^{\infty}))$ defined as
 $$\gamma_1(B) \doteq \veps^{-1}\gamma(B\cap A), \;\; \gamma_2(B) \doteq (1-\veps)^{-1}\gamma(B\cap A^c), \;\; B \in \clb(\RR_+^{\infty}).$$
 Using \eqref{eq:114} it is easily seen that $\gamma_1, \gamma_2 \in \cli$. 
 Indeed, if $B \in \clb(\RR_+^{\infty})$ and $t\ge 0$,
 \begin{align*}
	 \int_{\RR_+^{\infty}} T_t 1_B \, d\gamma_1 &=  \veps^{-1} \int_A T_t1_B \, d\gamma
	 =\veps^{-1} \int_A T_t1_{AB} \, d\gamma + \veps^{-1} \int_A T_t1_{A^cB} \, d\gamma\\
	 &= \veps^{-1} \int_A T_t1_{AB} \, d\gamma = \veps^{-1} \int_{\RR_+^{\infty}} T_t1_{AB} \, d\gamma
		%
	 =\veps^{-1}  \gamma (B\cap A) = \gamma_1(B),
  \end{align*}
  where the third and fourth equalities follow from \eqref{eq:114} and the fifth uses the invariance of $\gamma$. 
  This shows the invariance of $\gamma_1$ from which (together with the fact that $\gamma$ is invariant)
   the invariance of $\gamma_2$ follows immediately.
 
 Finally note that $\gamma_1\neq \gamma_2$, and by definition $\gamma = \veps \gamma_1 + (1-\veps)\gamma_2$. This contradicts the fact that $\gamma \in \cli_e$ and thus we must have $\gamma \in \cli_{er}$.  We have thus shown that $\cli_e \subset \cli_{er}$ which completes the proof. \hfill \qed
 
 \noindent {\bf Acknowledgements:}
Research supported in part by  the RTG award (DMS-2134107) from the NSF.  SB was supported in part by the NSF-CAREER award (DMS-2141621).
AB was supported in part by the NSF (DMS-2152577). {\SB We thank two anonymous referees whose valuable inputs significantly improved the article.}

{\noindent {\bf Data availability statement:}
This manuscript has no associated data.
}

\bibliographystyle{plain}
\bibliography{atlas_ref}

\begin{thebibliography}{10}

\bibitem{aldous41up}
David Aldous.
\newblock ``{U}p the river'' game story. 2002.
\newblock {\em Download available from: http://www. stat. berkeley. edu/\~{}
  aldous/Research/OP/river. pdf}, 41:42--47.

\bibitem{andjel1982invariant}
Enrique~Daniel Andjel.
\newblock Invariant measures for the zero range process.
\newblock {\em The Annals of Probability}, 10(3):525--547, 1982.

\bibitem{balazs2007existence}
M{\'a}rton Bal{\'a}zs, Firas Rassoul-Agha, Timo Sepp{\"a}l{\"a}inen, and
  S.~Sethuraman.
\newblock Existence of the zero range process and a deposition model with
  superlinear growth rates.
\newblock {\em The Annals of Probability}, 35(4):1201--1249, 2007.

\bibitem{banerjee2020dimension}
Sayan Banerjee and Brendan Brown.
\newblock Dimension-free local convergence and perturbations for reflected
  {B}rownian motions.
\newblock {\em Annals of Applied Probability (to appear)}, 2022.

\bibitem{BanBudh}
Sayan Banerjee and Amarjit Budhiraja.
\newblock Parameter and dimension dependence of convergence rates to
  stationarity for reflecting {B}rownian motions.
\newblock {\em Annals of Applied Probability}, 30(5):2005--2029, 2020.

\bibitem{banerjee2022domains}
Sayan Banerjee and Amarjit Budhiraja.
\newblock Domains of attraction of invariant distributions of the infinite
  {A}tlas model.
\newblock {\em The Annals of Probability}, 50(4):1610--1646, 2022.

\bibitem{banner2005atlas}
Adrian~D Banner, Robert Fernholz, and Ioannis Karatzas.
\newblock Atlas models of equity markets.
\newblock {\em The Annals of Applied Probability}, 15(4):2296--2330, 2005.

\bibitem{bass1987uniqueness}
Richard~F Bass and Etienne Pardoux.
\newblock Uniqueness for diffusions with piecewise constant coefficients.
\newblock {\em Probability Theory and Related Fields}, 76(4):557--572, 1987.

\bibitem{budlee}
Amarjit Budhiraja and Chihoon Lee.
\newblock Long time asymptotics for constrained diffusions in polyhedral
  domains.
\newblock {\em Stochastic processes and their applications}, 117(8):1014--1036,
  2007.

\bibitem{cabezas2019brownian}
Manuel Cabezas, Amir Dembo, Andrey Sarantsev, and Vladas Sidoravicius.
\newblock Brownian particles with rank-dependent drifts: Out-of-equilibrium
  behavior.
\newblock {\em Communications on Pure and Applied Mathematics},
  72(7):1424--1458, 2019.

\bibitem{DJO}
Amir Dembo, Milton Jara, and Stefano Olla.
\newblock The infinite {A}tlas process: Convergence to equilibrium.
\newblock {\em Annales de l'Institut Henri Poincar{\'e}, Probabilit{\'e}s et
  Statistiques}, 55(2):607--619, 2019.

\bibitem{dembo2016large}
Amir Dembo, Mykhaylo Shkolnikov, SR~Srinivasa Varadhan, and Ofer Zeitouni.
\newblock Large deviations for diffusions interacting through their ranks.
\newblock {\em Communications on Pure and Applied Mathematics},
  69(7):1259--1313, 2016.

\bibitem{fernholz2002stochastic}
E~Robert Fernholz.
\newblock Stochastic portfolio theory.
\newblock In {\em Stochastic portfolio theory}, pages 1--24. Springer, 2002.

\bibitem{fernholz2009stochastic}
Robert Fernholz and Ioannis Karatzas.
\newblock Stochastic portfolio theory: an overview.
\newblock {\em Handbook of numerical analysis}, 15:89--167, 2009.

\bibitem{harrison1987brownian}
J~Michael Harrison and Ruth~J Williams.
\newblock Brownian models of open queueing networks with homogeneous customer
  populations.
\newblock {\em Stochastics: An International Journal of Probability and
  Stochastic Processes}, 22(2):77--115, 1987.

\bibitem{harrison1987multidimensional}
J~Michael Harrison and Ruth~J Williams.
\newblock Multidimensional reflected {B}rownian motions having exponential
  stationary distributions.
\newblock {\em The Annals of Probability}, pages 115--137, 1987.

\bibitem{ichiba2010collisions}
Tomoyuki Ichiba and Ioannis Karatzas.
\newblock On collisions of {B}rownian particles.
\newblock {\em Annals of Applied Probability}, 20(3):951--977, 2010.

\bibitem{ichiba2013strong}
Tomoyuki Ichiba, Ioannis Karatzas, and Mykhaylo Shkolnikov.
\newblock Strong solutions of stochastic equations with rank-based
  coefficients.
\newblock {\em Probability Theory and Related Fields}, 156(1-2):229--248, 2013.

\bibitem{ichiba2013convergence}
Tomoyuki Ichiba, Soumik Pal, and Mykhaylo Shkolnikov.
\newblock Convergence rates for rank-based models with applications to
  portfolio theory.
\newblock {\em Probability Theory and Related Fields}, 156(1-2):415--448, 2013.

\bibitem{ichiba2017yet}
Tomoyuki Ichiba and Andrey Sarantsev.
\newblock Yet another condition for absence of collisions for competing
  {B}rownian particles.
\newblock {\em Electronic Communications in Probability}, 22, 2017.

\bibitem{jourdain2008propagation}
Benjamin Jourdain and Florent Malrieu.
\newblock Propagation of chaos and {P}oincar{\'e} inequalities for a system of
  particles interacting through their {C}{D}{F}.
\newblock {\em Annals of Applied Probability}, 18(5):1706--1736, 2008.

\bibitem{kakutani1948equivalence}
Shizuo Kakutani.
\newblock On equivalence of infinite product measures.
\newblock {\em Annals of Mathematics}, pages 214--224, 1948.

\bibitem{kall}
Olav Kallenberg.
\newblock {\em Foundations of modern probability}, volume~2.
\newblock Springer, 1997.

\bibitem{karatzas2016systems}
Ioannis Karatzas, Soumik Pal, and Mykhaylo Shkolnikov.
\newblock Systems of {B}rownian particles with asymmetric collisions.
\newblock {\em Annales de l'Institut Henri Poincar{\'e}, Probabilit{\'e}s et
  Statistiques}, 52(1):323--354, 2016.

\bibitem{KSbook}
Ioannis Karatzas and Steven Shreve.
\newblock {\em Brownian {M}otion and {S}tochastic {C}alculus}, volume 113.
\newblock Springer Science \& Business Media, 2012.

\bibitem{liggett1976coupling}
Thomas~M Liggett.
\newblock Coupling the simple exclusion process.
\newblock {\em The Annals of Probability}, pages 339--356, 1976.

\bibitem{PP}
Soumik Pal and Jim Pitman.
\newblock One-dimensional {B}rownian particle systems with rank-dependent
  drifts.
\newblock {\em The Annals of Applied Probability}, 18(6):2179--2207, 2008.

\bibitem{prott}
Philip~E Protter.
\newblock {\em Stochastic {I}ntegration and {D}ifferential {E}quations}.
\newblock Springer, 2005.

\bibitem{RY}
Daniel Revuz and Marc Yor.
\newblock {\em Continuous {M}artingales and {B}rownian {M}otion}, volume 293.
\newblock Springer Science \& Business Media, 2013.

\bibitem{RuzAiz}
Anastasia Ruzmaikina and Michael Aizenman.
\newblock Characterization of invariant measures at the leading edge for
  competing particle systems.
\newblock {\em The Annals of Probability}, 33(1):82--113, 2005.

\bibitem{sarantsev2015triple}
Andrey Sarantsev.
\newblock Triple and simultaneous collisions of competing {B}rownian particles.
\newblock {\em Electronic journal of probability}, 20, 2015.

\bibitem{AS}
Andrey Sarantsev.
\newblock Infinite systems of competing {B}rownian particles.
\newblock {\em Annales de l'Institut Henri Poincar{\'e}, Probabilit{\'e}s et
  Statistiques}, 53(4):2279--2315, 2017.

\bibitem{sarantsev2017two}
Andrey Sarantsev.
\newblock Two-sided infinite systems of competing brownian particles.
\newblock {\em ESAIM: Probability and Statistics}, 21:317--349, 2017.

\bibitem{AS2}
Andrey Sarantsev.
\newblock Comparison techniques for competing {B}rownian particles.
\newblock {\em Journal of Theoretical Probability}, 32(2):545--585, 2019.

\bibitem{sarantsev2017stationary}
Andrey Sarantsev and Li-Cheng Tsai.
\newblock Stationary gap distributions for infinite systems of competing
  {B}rownian particles.
\newblock {\em Electronic Journal of Probability}, 22, 2017.

\bibitem{SethLN}
S.~Sethuraman.
\newblock Lecture notes: Invariant measures: Ergodicity and extremality.

\bibitem{sethuraman2001extremal}
Sunder Sethuraman.
\newblock On extremal measures for conservative particle systems.
\newblock {\em Annales de l'Institut Henri Poincar{\'e}, Probabilit{\'e}s et
  Statistiques}, 37(2):139--154, 2001.

\bibitem{shkol2011levy}
Mykhaylo Shkolnikov.
\newblock Competing particle systems evolving by interacting {L}\'{e}vy
  processes.
\newblock {\em The Annals of Applied Probability}, 21(5):1911--1932, 2011.

\bibitem{shkol}
Mykhaylo Shkolnikov.
\newblock Large systems of diffusions interacting through their ranks.
\newblock {\em Stochastic Processes and their Applications}, 122(4):1730--1747,
  2012.

\bibitem{tang2018optimal}
Wenpin Tang and Li-Cheng Tsai.
\newblock Optimal surviving strategy for drifted {B}rownian motions with
  absorption.
\newblock {\em The Annals of Probability}, 46(3):1597--1650, 2018.

\bibitem{tsai_stat}
Li-Cheng Tsai.
\newblock {Stationary distributions of the Atlas model}.
\newblock {\em Electronic Communications in Probability}, 23:1 -- 10, 2018.

\end{thebibliography}

\vspace{\baselineskip}

\noindent{\scriptsize {\textsc{\noindent S. Banerjee and A. Budhiraja,\newline
Department of Statistics and Operations Research\newline
University of North Carolina\newline
Chapel Hill, NC 27599, USA\newline
email: sayan@email.unc.edu
\newline
email: budhiraj@email.unc.edu
\vspace{\baselineskip} } }}

\end{document}